\documentclass[a4paper,10pt,reqno]{amsart}
\usepackage{amsmath}%
\usepackage{amsthm}
\usepackage{amsfonts}%
\usepackage{amscd}
\usepackage{amssymb}%
\usepackage{graphicx}
\usepackage{enumerate}
\usepackage[all]{xy}
\usepackage{hyperref}
\usepackage{mathtools}
\usepackage{ stmaryrd }
\usepackage{xcolor}

\newtheorem{theorem}{Theorem}
\theoremstyle{plain}
\newtheorem{proposition}[theorem]{Proposition}
\newtheorem{lemma}[theorem]{Lemma}
\newtheorem{corollary}[theorem]{Corollary}

\theoremstyle{definition}
\newtheorem{definition}[theorem]{Definition}

\newtheorem{example}[theorem]{Example}

\theoremstyle{remark}

\newtheorem{remark}[theorem]{Remark}

\numberwithin{equation}{section}

\DeclareMathOperator{\tr}{tr}

\DeclareMathOperator{\Aut}{Aut}
\DeclareMathOperator{\Hom}{Hom}

\DeclareMathOperator{\C}{\mathcal{C}}
\DeclareMathOperator{\D}{\mathcal{D}}
\DeclareMathOperator{\E}{\mathcal{E}}

\DeclareMathOperator{\id}{id}
\DeclareMathOperator{\F}{\mathcal{F}}
\DeclareMathOperator{\G}{\mathcal{G}}

\newcommand{\N}{\mathbb{N}}

\begin{document}
	
\title[Separability for relative extensions]{Separability for relative extensions of object unital strongly groupoid graded rings}
	\author[Z. Cristiano]{Zaqueu Cristiano}
	\address[Zaqueu Cristiano]{Department of Mathematics - IME, University of S\~ao Paulo,
		Rua do Mat\~ao 1010, S\~ao Paulo, SP, 05508-090, Brazil}
	\email[Zaqueu Cristiano]{zaqueucristiano@usp.br}%
	
	\author[P. Lundstr\"{o}m]{Patrik Lundstr\"{o}m}
	\address[Patrik Lundstr\"{o}m]{University West,
Department of Engineering Science, 
SE-461 86 Trollh\"{a}ttan, Sweden}
	\email[Patrik Lundstr\"{o}m]{patrik.lundstrom@hv.se}%
	
	
	\date{\today}
	\subjclass[2020]{Primary . Secondary } %
	\keywords{}

\begin{abstract}
We prove that if $R$ is a ring that is
object unital and strongly graded by a
groupoid $\Gamma$, and if $\Delta$ is a wide
subgroupoid of $\Gamma$, then $R/R_\Delta$ is
separable if and only if, for each
$e \in \Gamma_0$, there exist $f \in [e]$ and $r \in C_{R_0}(R_\Lambda) :=
\{ x \in R_0 \mid xy = yx \text{ for all } y \in R_\Lambda \}$
with
\smash{${\rm tr}_{\Gamma/\Delta}^f(r) = 1_{R_f}$.}
Here, $\Gamma_0$ denotes the set of objects of $\Gamma$,
$[e]$ the connected component of $\Gamma_0$ containing $e$,
$\Lambda$ the isotropy groupoid of $\Delta$, and
\smash{${\rm tr}_{\Gamma/\Delta}^f$} 
the relative trace map at $f$.
This result simultaneously generalizes earlier theorems
on separability for matrix rings and group-graded rings due to
DeMeyer-Ingraham, N{\v a}st{\v a}sescu, Van den Bergh,
Van Oystaeyen, Miyashita, 
Theohari-Apostolidi, and Vavatsoulas,
as well as results on groupoid-graded 
rings due to
Cala, Lundstr\"{o}m, and Pinedo.
As an application, we consider separability for object crossed products,
including object twisted groupoid rings, 
classical groupoid rings
and matrix rings, as well as
crossed product algebras defined by
infinite separable field extensions.
\end{abstract}
	
\maketitle


\section{Introduction}\label{sec:introduction}

Let $R$ be a ring. By this we mean that 
$R$ is associative but not necessarily unital.
If $R$ is unital, we denote its multiplicative
unit by $1_R$ and assume $1_R \neq 0$.
Suppose that $S$ is a subring of $R$.
In that case, we say that $R$ is a 
ring extension of $S$, 
and we indicate this by writing $R/S$.

Recall that $R/S$ is said
to be \emph{separable} if the multiplication
map $\mu : R \otimes_S R \to R$,
defined by the additive extension of
$\mu(r \otimes r') := rr'$, 
for $r,r' \in R$,
admits a \emph{section} in the category of 
$R$-bimodules, that is, if
there is an $R$-bimodule map
$\delta : R \to R \otimes_S R$ such that
$\mu \circ \delta = \id_R$. 

The concept of separability for ring extensions
generalizes the classical notion of 
separability for algebras
over fields, which itself extends the notion
of separability for field extensions 
(see e.g. \cite{demeyer1971}). 
Separable ring extensions have in turn
been generalized through
the introduction of 
\emph{separable functors} in \cite{nastasescu1989}.
For a comprehensive overview of the historical 
development of the concept of separability, 
see \cite{wisbauer2016}.

Separable ring extensions have been studied
by numerous authors (see e.g. 
\cite{brzezinski2005,CLP,demeyer1971,
haefner2000,iglesias1998,kadison1999,
lundstrom2005,lundstrom2006,
miyashita1971,
nastasescu1989,nystedt2018,
rafael1990,TheohariApostolidi}).
The reason for the intense interest 
in such ring
extensions is that important properties 
of the ground ring, such as semisimplicity
and hereditarity, are often automatically 
inherited by the larger ring.

Conditions for separability of ring 
extensions have been studied in various settings.
Let us briefly describe some of these results.
The following results on
rings of matrices and group rings were
proved by DeMeyer and Ingraham in
\cite[p. 41]{demeyer1971}:

\begin{theorem}
\label{thm:Demeyer-Ingraham}
Suppose that $S$ is a unital ring.
\begin{enumerate}[{\rm (a)}]

\item Let $n \in \N := \{ 1,2,3,\ldots \}$
and let $R := M_n(S)$ denote the ring of 
$n \times n$ matrices over $S$. 
We consider $S$ as a subring of $R$
by identifying each element of $S$ with
its corresponding 
diagonal matrix. Then $R/S$ is separable.

\item Let $G$ be a finite group, $R:=S[G]$
the group ring of $G$ over $S$.
We consider $S$ as a subring of $R$
by identifying each 
$s \in S$ with $se \in R$,
where $e$ is the identity element of $G$.
If the order $|G|$ of $G$ is a unit in $S$, 
then $R/S$ is separable.

\end{enumerate}
\end{theorem}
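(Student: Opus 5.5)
The plan is to use the standard characterization of separability by a separability idempotent. For a unital ring $R$ with subring $S$ sharing the unit, $R/S$ is separable if and only if there is an element $x=\sum_i a_i\otimes b_i\in R\otimes_S R$ with $\sum_i a_ib_i=1_R$ and $rx=xr$ for all $r\in R$. Indeed, given such $x$, the map $\delta(r):=rx=xr$ is an $R$-bimodule section of $\mu$. Conversely, $x:=\delta(1_R)$ has these two properties. So in each case I would write down an explicit $x$ and check these two conditions. Since $R$ is generated as a left $S$-module by a convenient set of elements, the commutation condition only needs checking for elements of $S$ and for those generators.

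For (a), let $e_{ij}$ denote the matrix units of $M_n(S)$, and note that $S$ (as scalar diagonal matrices) commutes with every $e_{ij}$. I would take
\[
x:=\sum_{i=1}^n e_{i1}\otimes e_{1i}.
\]
Then $\mu(x)=\sum_i e_{ii}=1_R$. For commutation with $s\in S$, we have $sx=\sum_i s e_{i1}\otimes e_{1i}=\sum_i e_{i1}\otimes e_{1i}s=xs$, because $s$ is central relative to the matrix units and can be moved across the tensor sign. For commutation with a matrix unit $e_{kl}$, we get $e_{kl}x=e_{k1}\otimes e_{1l}=xe_{kl}$. Since every matrix is an $S$-linear combination of the $e_{kl}$, $x$ commutes with all of $R$.

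For (b), I would take
\[
x:=|G|^{-1}\sum_{g\in G} g\otimes g^{-1}.
\]
Then $\mu(x)=|G|^{-1}\cdot|G|\cdot e=1_R$. Elements of $S$ commute with group elements and can pass through the tensor, so $sx=xs$. For $h\in G$,
\[
hx=|G|^{-1}\sum_g hg\otimes g^{-1}=|G|^{-1}\sum_{g'} g'\otimes g'^{-1}h=xh,
\]
using the reindexing $g'=hg$. By $S$-linearity, $x$ then commutes with all of $S[G]$.

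The only point needing care is the tensor manipulation over $S$: moving $s$ across the tensor sign uses that $s$ lies in the subring $S$ and commutes with the chosen basis elements. I also need the scalar $|G|^{-1}$ to be central, which holds because it is an integer multiple of $1_S$ inverted. Everything else is routine, so I do not expect a genuine obstacle.
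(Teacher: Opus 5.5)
Your proof is correct, and it is essentially the classical separability-idempotent argument of DeMeyer--Ingraham, which the paper only cites for this theorem. The paper's general element $x_e=\sum_{\tau}\gamma_\tau(r)w_\tau$ from the proof of (d)$\Rightarrow$(e) of Theorem~\ref{prop: separability R over R_Delta} reduces, with $r=|G|^{-1}$, to your $|G|^{-1}\sum_{g}g\otimes g^{-1}$. For the thin groupoid it likewise gives $\sum_i e_{i1}\otimes e_{1i}$, but only in $R\otimes_{R_0}R$ over the diagonal matrices (since $R_\Delta\supseteq R_0$ for wide $\Delta$), so your direct check over the scalar copy of $S$ is what gives (a) as literally stated.
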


Theorem \ref{thm:Demeyer-Ingraham}
has been generalized to 
various types of extensions of graded rings.
More precisely, let $G$ be a group.
Recall that a ring $R$ is said to be
\emph{graded by $G$} if 
there is a family $(R_g)_{g \in G}$ of
additive subgroups of $R$ satisfying
$R = \oplus_{g \in G} R_g$ and
$R_g R_h \subseteq R_{gh}$, for $g,h \in G$;
if also $R_g R_h = R_{gh}$, for $g,h \in G$,
then $R$ is said to be 
\emph{strongly} graded by $G$.

Suppose that $R$ is unital
and strongly graded by $G$. 
For each $g \in G$, choose $n_g \in \N$, 
\smash{$u_g^{(i)} \in R_g$} and
\smash{$v_{g^{-1}}^{(i)} \in R_{g^{-1}}$},
for $i \in \{ 1,\ldots,n_g\}$, such that
\smash{$1_R = \sum_{i=1}^{n_g} u_g^{(i)} 
v_{g^{-1}}^{(i)}$.} Define 
$\gamma_g : R \to R$ by 
$\gamma_g(r) = \sum_{i=1}^{n_g}  
u_g^{(i)} r v_{g^{-1}}^{(i)}$, 
for $r \in R$.
If $G$ is finite, we may define the \emph{trace map} 
$\tr : R \to R$ by $\tr(r) = \sum_{g \in G}
\gamma_g(r)$, for $r \in R$.
Let $Z(R_e) :=
\{ x \in R_e \mid xy = yx \text{ for all } y \in R_e \}$.
N{\v a}st{\v a}sescu, Van den Bergh and 
Van Oystaeyen proved the following 
result in \cite{nastasescu1989}:

\begin{theorem}\label{thm:nastasescu}
Let $R$ be a unital ring which is 
strongly graded by the group $G$.
Then $R/R_e$ is separable
if and only if $G$ is finite and 
there is $r \in Z(R_e)$ with $\tr(r) = 1_R$.
\end{theorem}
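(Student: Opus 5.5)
The plan is to use the standard criterion that $R/R_e$ is separable if and only if there is a \emph{separability element}. This is an element $\sum_j x_j \otimes y_j \in R \otimes_{R_e} R$ with $\sum_j x_j y_j = 1_R$ that commutes with every $r \in R$, meaning $r\sum_j x_j\otimes y_j = \sum_j x_j \otimes y_j r$. Such an element corresponds to $\delta(1_R)$ for a bimodule section $\delta$ of $\mu$.

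Strong grading gives a first simplification. The elements $u_g^{(i)}, v_{g^{-1}}^{(i)}$ make each $R_g$ finitely generated projective on both sides over $R_e$, and $R_g \otimes_{R_e} R_{g^{-1}} \cong R_e$ via multiplication. Hence $R\otimes_{R_e} R = \bigoplus_{g,h} R_g \otimes_{R_e} R_h$. The $R$-centralizing elements of this tensor product are determined by their components in $\bigoplus_g R_g\otimes_{R_e} R_{g^{-1}}$, because the degree-$e$ part of the $R_e$-bimodule structure must be preserved.

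For sufficiency, suppose $G$ is finite and $r \in Z(R_e)$ satisfies $\tr(r)=1_R$. Put
\[ \textstyle e_r := \sum_{g\in G}\sum_{i=1}^{n_g} u_g^{(i)} r \otimes v_{g^{-1}}^{(i)}. \]
Applying $\mu$ gives $\tr(r)=1_R$. It remains to check that $a e_r = e_r a$ for homogeneous $a \in R_h$. I would write $a = \sum_k a\, u_{h^{-1}}^{(k)}\cdots$, using that $R_h R_{h^{-1}} = R_e$ and $1_R = \sum_i u^{(i)}_{g}v^{(i)}_{g^{-1}}$. One then shows that $\sum_i u_g^{(i)} \otimes v_{g^{-1}}^{(i)} \in R_g\otimes_{R_e}R_{g^{-1}}$ is independent of the choice of $u,v$: it is the preimage of $1$ under the isomorphism $R_g\otimes_{R_e}R_{g^{-1}}\cong R_e$, i.e.\ a canonical Casimir element. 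Moreover $a \cdot c_g = c_{hg}\cdot a$ after reindexing, where $c_g$ denotes this element with $r$ inserted in the middle. The centrality of $r$ in $R_e$ lets $r$ pass across the $R_e$-coefficients produced by this reindexing. Summing over the finite group and reindexing $g\mapsto hg$ gives $a e_r = e_r a$.

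For necessity, let $e=\sum_j x_j\otimes y_j$ be a separability element. Project it onto $\bigoplus_g R_g\otimes_{R_e}R_{g^{-1}}$, the part that maps to $R_e$ under $\mu$, and take its $g=e$ component in $R_e\otimes_{R_e}R_e\cong R_e$. Call the resulting element $r$. Commuting with $R_e$ forces $r\in Z(R_e)$. Commuting with $R_h$ shows that the $g$-component equals $\sum_i u_g^{(i)} r\otimes v_{g^{-1}}^{(i)}$, so applying $\mu$ yields $\tr(r)=1_R$ on whichever components are nonzero. Finiteness of $G$ comes from the fact that a tensor element has only finitely many nonzero homogeneous components. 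If the component at some $g$ vanished, the relation that transports components would force the $e$-component, and hence all components, to vanish, contradicting $\mu=1_R$. So every $g\in G$ occurs, and $G$ is finite.

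The main obstacle is the bookkeeping in the commutation step. One has to show that the degree-$(g,g^{-1})$ component is transported correctly by multiplication with $R_h$ on each side, and that it is canonical, i.e.\ independent of the choice of $u_g^{(i)},v_{g^{-1}}^{(i)}$. I would handle this first, via the isomorphism $R_g\otimes_{R_e}R_{g^{-1}}\cong R_e$ coming from the Morita context of the strong grading.
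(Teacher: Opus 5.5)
Your proposal is correct and is essentially the paper's own argument, which obtains this result as the case $\Gamma=G$, $\Delta=\{e\}$ of its main theorem. There, too, a separability element is split by bidegree and its total-degree-$e$ components are transported by $a\,x_g=x_{\sigma g}\,a$ for $a\in R_\sigma$; this yields $c_e\in Z(R_e)$ with all transported components nonzero (forcing $G$ finite) and $\tr(c_e)=1_R$, and sufficiency uses $\sum_{\tau}\gamma_\tau(r)w_\tau$, which equals your $\sum_{g,i}u_g^{(i)}r\otimes v_{g^{-1}}^{(i)}$.

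One harmless slip: your aside that centralizing elements are \emph{determined} by their total-degree-$e$ components is false. For the group algebra $R=k[G]$ of $G=\{e,s\}$ of order $2$ over a field $k$, the element $e\otimes s+s\otimes e$ is centralizing with zero total-degree-$e$ part. Your argument only uses that the total-degree-$e$ projection of a separability element is again one, which is true.
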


This result has been further generalized to
\emph{relative} ring extensions $R/R_H$, 
where $H$ is a subgroup
of $G$ and $R_H = \oplus_{h \in H} R_h$.
Namely, let $[G:H]$ denote the index of $H$ in $G$,
and choose a left transversal $T$ for $H$ 
in $G$. If $[G:H]$ is finite, then we 
define the \emph{relative trace map}
$\tr_{G/H} : R \to R$ by
$\tr_{G/H}(r) = \sum_{t \in T} \gamma_t(r)$,
for $r \in R$. 
The following result was proved by
Theohari-Apostolidi and Vavatsoulas in
\cite[Thm. 2.1]{TheohariApostolidi}:

\begin{theorem}\label{thm:miyashita}
Let $R$ be a unital ring which is 
strongly graded by the group $G$. Suppose that $H$ 
is a subgroup of $G$. 
Then $R/R_H$ is separable
if and only if $[G:H]$ is finite and 
there exists $r \in Z(R_H)$ such that 
\smash{$\tr_{G/H}(r) = 1_R$}.
\end{theorem}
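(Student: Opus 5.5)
The proof has two directions. For sufficiency I will write down an explicit $R$-central element of $R\otimes_{R_H}R$. For necessity I will take the separability idempotent $\delta(1_R)$, decompose it along the cosets of $H$, and read off $[G:H]<\infty$ and the element $r$.

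\emph{Sufficiency.} Suppose $[G:H]$ is finite and $r\in Z(R_H)$ satisfies $\tr_{G/H}(r)=1_R$. Since $Z(R_H)$ is defined inside $R_H$ and $\gamma_t$ raises degree by $t\cdot(\cdot)\cdot t^{-1}$, I will take $r$ homogeneous of degree $e$; if not, I replace it by its degree-$e$ component, which still satisfies both conditions.
\begin{enumerate}[{\rm (i)}]
\item For each $s\in G$ set $E_s(r):=\sum_i u_s^{(i)} r\otimes v_{s^{-1}}^{(i)}\in R\otimes_{R_H}R$. I will show that $E_s(r)$ depends neither on the choice of the elements $u_s^{(i)},v_{s^{-1}}^{(i)}$ nor on the representative $s$ of the coset $sH$. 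To see this, insert $1_R=\sum_j u_h^{(j)}v_{h^{-1}}^{(j)}$ for $h\in H$. Then use that $r$ commutes with $R_H$ and that elements of $R_H$ pass across $\otimes_{R_H}$. Hence $e:=\sum_{t\in T}E_t(r)$ is well defined.
\item Next I show that $xe=ex$ for homogeneous $x\in R_g$. Write $x u_t^{(i)}=\sum_j u_{gt}^{(j)}\bigl(v_{(gt)^{-1}}^{(j)}x u_t^{(i)}\bigr)$. The bracket lies in $R_e$, so it commutes with $r$ and can be moved across the tensor sign. Using $\sum_i u_t^{(i)}v_{t^{-1}}^{(i)}=1_R$, this gives $xE_t(r)=E_{gt}(r)\,x$.
\item Since $t\mapsto gt$ permutes the left cosets, step (i) gives $xe=ex$.
\end{enumerate}
Then $\delta(x):=xe$ is an $R$-bimodule map, and $\mu\circ\delta(1_R)=\mu(e)=\tr_{G/H}(r)=1_R$. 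So $R/R_H$ is separable.

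\emph{Necessity.} Let $e=\delta(1_R)$; it is $R$-central and $\mu(e)=1_R$. The module $R\otimes_{R_H}R$ is $G$-graded, and both $R$-centrality and $\mu$ respect the grading. So the degree-$e$ component $e_0$ of $e$ is again central with $\mu(e_0)=1_R$. Moreover $R\otimes_{R_H}R=\bigoplus_{C\in G/H}R_C\otimes_{R_H}R$, where $R_C:=\bigoplus_{c\in C}R_c$. Accordingly $e_0=\sum_{t\in T}e_t$ with $e_t\in R_{tH}\otimes_{R_H}R_{Ht^{-1}}$ of degree $e$, and only finitely many $e_t$ are nonzero. For $x\in R_g$, centrality together with the coset decomposition gives $xe_t=e_{gt}x$.
\begin{enumerate}[{\rm (i)}]
\item If some $e_t$ vanished, strong grading ($1_R=\sum_i u_g^{(i)} v_{g^{-1}}^{(i)}$) would force every $e_s$ to vanish, and then $\mu(e_0)=0$, a contradiction. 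Hence all $e_t$ are nonzero, and $[G:H]$ is finite.
\item The component $e_H$ for the coset $H$ lies in the degree-$e$ part of $R_H\otimes_{R_H}R_H\cong R_H$. So $e_H=r\otimes 1$ for a unique $r\in R_e$.
\item Centrality against $R_H$ gives $r\in Z(R_H)$.
\item From $e_t=\sum_i u_t^{(i)}v_{t^{-1}}^{(i)}e_t$ and $v_{t^{-1}}^{(i)}e_t=e_H v_{t^{-1}}^{(i)}$ I get $e_t=E_t(r)$.
\end{enumerate}
Applying $\mu$ then yields $\tr_{G/H}(r)=1_R$.

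The main obstacle is the bookkeeping in the necessity direction. One must verify carefully that the decomposition of $R\otimes_{R_H}R$ by cosets $tH$ is compatible with left and right multiplication by homogeneous elements, so that centrality really gives $xe_t=e_{gt}x$. One must also check that the identification $R_H\otimes_{R_H}R_H\cong R_H$ gives an element $r$ of degree $e$. In the sufficiency direction, the coset-independence of $E_s(r)$ in step (i) is the key computation.
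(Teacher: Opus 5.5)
Your proof is correct and is essentially the paper's argument in the one-object case, namely Theorem~\ref{prop: separability R over R_Delta} specialized to Corollary~\ref{cor:miyashita}. For sufficiency, your element $\sum_{t\in T}E_t(r)$ coincides with the paper's $\sum_\tau\gamma_\tau(r)w_\tau$. For necessity, you use the same steps: projection onto degree $e$, decomposition by cosets, the intertwining relation $xe_t=e_{gt}x$, and the non-vanishing argument for $[G:H]<\infty$. The only variation is that you stay in $R\otimes_{R_H}R$ and identify $e_H=r\otimes 1$ to get $e_t=E_t(r)$, whereas the paper applies $\mu$ componentwise and then recovers $\gamma_\tau(c_e)$. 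Either way you obtain the sharper $r\in C_{R_e}(R_H)$, and projecting to degree $e$ first is exactly what avoids the gap described in Remark~\ref{rem:gap}.
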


Let us make some remarks on this result.
If $H = \{ e \}$, it is precisely 
Theorem \ref{thm:nastasescu}, and if 
$H$ is a normal subgroup of $G$, 
then it appears in 
\cite[Prop. 2.2]{haefner2000}. 
Note that the sufficient condition was
established already 
by Miyashita in \cite[Thm. 2.11]{miyashita1971}.

In this article, we generalize 
Theorem \ref{thm:miyashita} to the 
context of relative extensions of 
rings graded by \emph{groupoids}
(see Theorem \ref{thm:main}).
This class of rings includes
rings which are not, in any natural way,
graded by groups, such as 
matrices, crossed products 
\cite{lundstrom2005}
and groupoid rings \cite{lundstrom2006}.
Let us briefly describe these structures.

Let $\Gamma$ be a groupoid.
By this we mean that
$\Gamma$ is a small category in which 
every morphism is an isomorphism. 
We let $\Gamma_0$ denote the set 
of objects of $\Gamma$, identified with
their identity morphisms,
and we let $\Gamma_1$ denote the set of 
morphisms of $\Gamma$.
The \emph{domain} and \emph{range}
functions $\Gamma_1 \to \Gamma_0$
are denoted by $d$ and $r$, respectively.
Let $e,f \in \Gamma_0$. We let 
$\Gamma(e,f)$ denote the set of 
$\sigma \in \Gamma_1$ with 
$d(\sigma) = e$ and $r(\sigma) = f$;
this will often be indicated by writing
$\sigma : e \to f$. 
We identify each 
$e \in \Gamma_0$ with its corresponding 
identity 
morphism $e \to e$, thereby considering
$\Gamma_0$ as a subset of $\Gamma_1$.
From now on we write $\sigma \in \Gamma$
when we mean $\sigma \in \Gamma_1$.
We let the inverse of $\sigma \in \Gamma$
be denoted by $\sigma^{-1}$.
We let $\Gamma_2$ denote the set
of ordered pairs $(\sigma,\tau)$ in 
$\Gamma \times \Gamma$ with
$d(\sigma)=r(\tau)$.
If $(\sigma,\tau) \in \Gamma_2$, then the
composite of $\sigma$ and $\tau$ 
is denoted $\sigma\tau$.

By a \emph{subgroupoid} of $\Gamma$ we 
mean a subcategory $\Delta$ of $\Gamma$
which is a groupoid. In that case, we say
that $\Delta$ is \emph{wide} if 
$\Delta_0 = \Gamma_0$.
If $e \in \Gamma_0$, then $\Gamma(e,e)$ 
is a group called the 
\emph{isotropy group} at $e$;
we will denote this group $\Gamma(e)$.
The \emph{isotropy groupoid} 
${\rm Iso}(\Gamma)$ of 
$\Gamma$ is the groupoid 
having $\Gamma_0$ as objects and
$\cup_{e \in \Gamma_0} \Gamma(e)$
as morphisms.

Let $R$ be a ring. Following 
\cite{lundstrom2005,lundstrom2006}, 
we say that $R$ is 
\emph{graded by $\Gamma$} if there
is a family 
$(R_\sigma)_{\sigma \in \Gamma}$
of additive subgroups of $R$ 
with 
$R = \oplus_{\sigma \in \Gamma} R_\sigma$,
and for all
$\sigma,\tau \in \Gamma$,
$R_\sigma R_\tau \subseteq R_{\sigma\tau}$,
if $(\sigma,\tau) \in \Gamma_2$, and 
$R_\sigma R_\tau = \{ 0 \}$, if
$(\sigma,\tau) \notin \Gamma_2$.
In that case, $R$ is said to be 
\emph{strongly} graded by $\Gamma$ if 
$R_\sigma R_\tau = R_{\sigma\tau}$,
for $(\sigma,\tau) \in \Gamma_2$.

Following \cite{CLP,cala2022}, 
we say that a $\Gamma$-grading on $R$ is 
\emph{object unital}
if for each $e \in \Gamma_0$, 
the ring $R_e$ 
is unital, and for each
$\sigma \in \Gamma$ 
and each $r \in R_\sigma$, 
$1_{R_{r(\sigma)}} r =
r 1_{R_{d(\sigma)}} = r$.
For a subset $\Delta$ of $\Gamma$
we put $R_\Delta := 
\oplus_{\delta \in \Delta} R_\delta$.
We also set $R_0:=R_{\Gamma_0}$.

In \cite{CLP}, a generalization of 
Theorem \ref{thm:nastasescu} is
established for ring extensions
$R/R_0$ where $R$ is a ring graded 
by a groupoid $\Gamma$
(see Theorem \ref{thm:CLP}).
Suppose that $R$ is strongly  
graded by $\Gamma$ and, with respect
to this grading, object unital.
Then, for each $e \in \Gamma_0$, 
$R_{\Gamma(e)}$ is a unital ring
which is strongly graded by the group 
$\Gamma(e)$ and we can define the 
trace map ${\rm tr}^e : R_{\Gamma(e)}
\to R_{\Gamma(e)}$ as in the 
discussion preceding 
Theorem \ref{thm:nastasescu}.
In \cite[Thm. 2]{CLP} the following result 
is proved.

\begin{theorem}\label{thm:CLP}
Let $R$ be a ring which is strongly 
graded by the groupoid $\Gamma$.
Suppose that with respect to this 
grading, $R$ is object unital.
Then $R/R_0$ is separable if and only if
for each $e \in \Gamma_0$,
$\Gamma(e)$ is finite and there is 
$r \in Z( R_e )$ with
${\rm tr}^e(r) = 1_{R_e}$.
\end{theorem}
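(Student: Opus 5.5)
We prove the two implications separately. Throughout we use that $R_0=\oplus_{e\in\Gamma_0}R_e$ is a ring with local units $1_{R_e}$, so $R\otimes_{R_0}R=\oplus_{e,f}1_{R_f}(R\otimes_{R_0}R)1_{R_e}$. A bimodule section $\delta$ of $\mu$ is then the same thing as a family $x_e:=\delta(1_{R_e})$, $e\in\Gamma_0$, such that:
\begin{itemize}
\item[(i)] $1_{R_e}x_e=x_e1_{R_e}=x_e$;
\item[(ii)] $\mu(x_e)=1_{R_e}$;
\item[(iii)] $a x_e=x_f a$ for every homogeneous $a\in R_\sigma$ with $\sigma:e\to f$.
\end{itemize}
Conversely, given such a family, we set $\delta(a):=a x_{d(\sigma)}$ for $a\in R_\sigma$ and extend additively; condition (iii) makes $\delta$ a bimodule map. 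So everything reduces to constructing, or analysing, families $(x_e)$ of this kind.

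\emph{Necessity.} Suppose $\delta$ exists and let $x_e=\delta(1_{R_e})=\sum_k a_k\otimes b_k$. We may take the $a_k,b_k$ homogeneous with $a_k\in 1_{R_e}R$ and $b_k\in R1_{R_e}$.
\begin{itemize}
\item Consider the map $\psi:R\otimes_{R_0}R\to R$, $a\otimes b\mapsto a\,p_0(b)$, where $p_0$ is the projection onto $R_0$. It is well defined because $p_0$ is an $R_0$-bimodule map, and it is left $R$-linear.
\item Put $r:=\psi(x_e)$. By (i), $r\in R_{\Gamma(e)}$, and from (iii) for $\sigma\in\Gamma(e)$ we expect its degree-$e$ part to lie in $Z(R_e)$.
\item To relate $r$ to ${\rm tr}^e$, rewrite each $b_k\in R_\tau$ using strong grading, $1_{R_{d(\tau)}}=\sum_i u^{(i)}_{\tau^{-1}}v^{(i)}_{\tau}$, and move the $R_{\tau^{-1}}$-factors across the tensor sign. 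This should present $x_e$ as a sum of terms $\sum_i u^{(i)}_g r_g\otimes v^{(i)}_{g^{-1}}$ with $g$ ranging over those elements of $\Gamma(e)$ that actually occur.
\item Applying $\mu$ and using (ii) then gives ${\rm tr}^e(r_e)=1_{R_e}$.
\item Finiteness of $\Gamma(e)$ follows as in Theorem \ref{thm:nastasescu}. The centralizing property forces every $g\in\Gamma(e)$ to occur, because conjugating $x_e$ by $R_g$ permutes the occurring degrees. Since $x_e$ is a finite sum, $\Gamma(e)$ is finite.
\end{itemize}
This step is essentially the group-graded argument of Theorem \ref{thm:nastasescu} applied to the unital strongly $\Gamma(e)$-graded ring $R_{\Gamma(e)}$.

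\emph{Sufficiency.} For each $e$, put
\[
x_e:=\sum_{g\in\Gamma(e)}\sum_i u^{(i)}_g\, r\otimes v^{(i)}_{g^{-1}}\in R_{\Gamma(e)}\otimes_{R_0}R_{\Gamma(e)} .
\]
\begin{itemize}
\item The usual computation, using that $r\in Z(R_e)$ and that $\Gamma(e)$ is finite, shows that $x_e$ is independent of the chosen $u,v$ and commutes with $R_{\Gamma(e)}$.
\item We also get $\mu(x_e)={\rm tr}^e(r)=1_{R_e}$, so (i) and (ii) hold, and (iii) holds for $\sigma\in\Gamma(e)$.
\end{itemize}

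\emph{Main obstacle.} The difficulty is (iii) for $\sigma:e\to f$ with $e\neq f$, since $x_e$ and $x_f$ were built independently. I would first try to show directly that $\sum_i u^{(i)}_\sigma x_e v^{(i)}_{\sigma^{-1}}=x_f$, where $1_{R_f}=\sum_i u^{(i)}_\sigma v^{(i)}_{\sigma^{-1}}$. This is plausible because conjugation by $\sigma$ identifies $\Gamma(e)$ with $\Gamma(f)$ and the relevant centers. If that identity fails for independently chosen traces, I would instead fix one object $e_0$ in each connected component. Then I would replace each $x_f$ by the transported element $\sum_i u^{(i)}_\sigma x_{e_0}v^{(i)}_{\sigma^{-1}}$ for some $\sigma:e_0\to f$, and check three things:
\begin{itemize}
\item this element is independent of $\sigma$, using that $x_{e_0}$ centralizes $R_{\Gamma(e_0)}$;
\item it still satisfies (i) and (ii);
\item (iii) then follows from the identity $a\,u_\tau\, x_{e_0} v_{\tau^{-1}}=u_{\sigma\tau}\, x_{e_0} v_{(\sigma\tau)^{-1}}\,a$, which again reduces to the centralizing property at $e_0$.
\end{itemize}
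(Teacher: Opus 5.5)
\textbf{Verdict.} The sufficiency half is fine, but the necessity half has a genuine gap.

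\textbf{Where necessity breaks.} You claim that rewriting $x_e$ via the strong grading presents it as a sum of terms $\sum_i u_g^{(i)}r_g\otimes v_{g^{-1}}^{(i)}$ with $g\in\Gamma(e)$, so that $\mu(x_e)=1_{R_e}$ yields ${\rm tr}^e(r_e)=1_{R_e}$ for the degree-$e$ part of $r=\psi(x_e)$. That claim is false. From $x_e=1_{R_e}x_e1_{R_e}$ you only get that the homogeneous components of $x_e$ lie in $R_\alpha\otimes_{R_0}R_\beta$ with $\alpha\in\Gamma(f,e)$ and $\beta\in\Gamma(e,f)$, for \emph{arbitrary} $f\in[e]$. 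Moreover, $1_{R_e}=\mu(x_e)$ can come entirely from components with $f\neq e$, and $\psi$ kills those.

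Here is a concrete case. Grade $R=M_2(S)$ by the thin groupoid $\{1,2\}\times\{1,2\}$, so $R_0$ is the diagonal, and take the separability element $x=E_{11}\otimes E_{11}+E_{21}\otimes E_{12}$. Then $x_2=E_{22}x=E_{21}\otimes E_{12}$ and $\psi(x_2)=0$. Since $\Gamma(2)$ is trivial, ${\rm tr}^2(r_2)=0\neq 1_{R_2}$. For the same reason, the only element of $\Gamma(2)$ does not ``occur'' in $x_2$, so your finiteness argument also fails.

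Conceptually, $x_e$ lies in $1_{R_e}R\otimes_{R_0}R1_{R_e}$, not in $R_{\Gamma(e)}\otimes_{R_e}R_{\Gamma(e)}$. So the group-graded argument cannot simply be run inside $R_{\Gamma(e)}$.

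\textbf{The missing step.} You need to keep track of every object of $[e]$ and then transport back to $e$. This is how the paper proceeds: Theorem \ref{prop: separability R over R_Delta}(b)$\Rightarrow$(c), followed by Theorem \ref{prop: separability R over R_Delta, normal case} with $\Delta=\Gamma_0$.
\begin{enumerate}[(1)]
\item Let $c_f$ be $\mu$ of the $R_f\otimes_{R_0}R_f$-component of $x_f$. Then $c_f\in Z(R_f)$.
\item Take $a\in R_\alpha$ with $\alpha\in\Gamma(f,e)$. Project $ax_f=x_ea$ onto $R_\alpha\otimes_{R_0}R_f$, apply $\mu$, and use Lemma \ref{lem:t=gamma_sigma(s),s=gamma_sigma-1(t)}. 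This shows that the $R_\alpha\otimes_{R_0}R_{\alpha^{-1}}$-component of $x_e$ has $\mu$-value $\gamma_\alpha(c_f)$.
\item Hence $1_{R_e}=\sum_{f\in F}\sum_{\alpha\in\Gamma(f,e)}\gamma_\alpha(c_f)$, where $F=\{f\in[e]\mid c_f\neq 0\}$ is finite and nonempty.
\item Since $\gamma_{\alpha^{-1}}\gamma_\alpha(c_f)=c_f$, every $\alpha\in\Gamma(f,e)$ gives a nonzero component when $f\in F$. As $x_e$ has only finitely many nonzero components, $|\Gamma(e)|=|\Gamma(f,e)|<\infty$.
\item Fix $\tau_f\in\Gamma(f,e)$ and set $r:=\sum_{f\in F}\gamma_{\tau_f}(c_f)$. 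This lies in $Z(R_e)$ by Lemma \ref{lem: gamma_sigma restricts to}.
\item The map $\sigma\mapsto\sigma\tau_f$ is a bijection $\Gamma(e)\to\Gamma(f,e)$, and $\gamma_\sigma\gamma_{\tau_f}=\gamma_{\sigma\tau_f}$. Therefore ${\rm tr}^e(r)=1_{R_e}$.
\end{enumerate}

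\textbf{Sufficiency.} This half works once you use your fallback. The transported element equals $\sum_{\tau\in\Gamma(e_0,f)}\gamma_\tau(r)w_\tau$, which is exactly the paper's element in the proof of (d)$\Rightarrow$(e). Your first attempt, by contrast, can never work for $e\neq f$, whatever traces you choose. The reason is that $R\otimes_{R_0}R=\bigoplus_{g\in\Gamma_0}R1_g\otimes_{R_0}1_gR$, the transported $x_e$ lies in the $g=e$ summand, $x_f$ lies in the $g=f$ summand, and both are nonzero.
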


In this article, we prove a
simultaneous generalization
of Theorems 
\ref{thm:Demeyer-Ingraham}-\ref{thm:CLP} in the context
of \emph{relative} extensions 
$R/R_\Delta$ of
groupoid graded rings
(see Theorem \ref{thm:main}). 

Suppose that $R$ is strongly  
graded by $\Gamma$ and, with respect
to this grading, object unital.
Take $\sigma \in \Gamma$.
Choose $n_\sigma \in \N$,
and $u_\sigma^{(i)} \in R_\sigma$ and $v_{\sigma^{-1}}^{(i)} \in R_{\sigma^{-1}}$, 
for $i \in \{ 1,\ldots,n_\sigma \}$,
such that $1_{r(\sigma)} = 
\sum_{i=1}^{n_\sigma} u_\sigma^{(i)}
v_{\sigma^{-1}}^{(i)}$.
Define $\gamma_\sigma : 
R \to R$ by setting $\gamma_\sigma(r) =  
\sum_{i=1}^{n_\sigma} u_\sigma^{(i)} r
v_{\sigma^{-1}}^{(i)}$, for $r \in R$.
Take $\sigma,\tau \in \Gamma$.

Let $\Delta$ be a wide subgroupoid of 
$\Gamma$ and set
$\Lambda := {\rm Iso}(\Delta)$.
We say that 
$\sigma$ and $\tau$ are \emph{right $\Lambda$-equivalent} 
if $\sigma \Lambda = \tau \Lambda$;
right $\Lambda$-equivalence is clearly an 
equivalence relation on $\Gamma$.
Let $T$ denote a set of representatives
for the different equivalence classes 
for this relation. 
For $e,f \in \Gamma_0$,
put $T_{f,e} := \{ \sigma \in T \mid
\mbox{$d(\sigma)=f$ and $r(\sigma)=e$} \}$
and $[e] := \{ f \in \Gamma_0 \mid
\Gamma(e,f) \neq \emptyset \}$.
We set $\Gamma_0^{\rm fin} :=
\{ f \in \Gamma_0 \mid [\Gamma(f):\Delta(f)] < \infty \}$ and
we define the relative trace map 
${\rm tr}_{\Gamma/\Delta}^e : R \to R$ by 
${\rm tr}_{\Gamma/\Delta}^e(r) = 
\sum_{f \in \Gamma_0^{\rm fin}} 
\sum_{\tau \in T_{f,e}} \gamma_\tau(r)$,
for $r \in R$. For $X,Y \subseteq R$, set 
$C_X(Y) := \{ x \in X \mid xy = yx \ \mbox{for all 
$y \in Y$} \}$.

In 
Section~\ref{sec:separabilitygeneralcase},
we prove the following result.

\begin{theorem}\label{thm:main}
Let $R$ be a ring which is strongly 
graded by the groupoid $\Gamma$.
Suppose that with respect to this 
grading, $R$ is object unital.
Let $\Delta$ be a wide subgroupoid 
of $\Gamma$ and put 
$\Lambda := {\rm Iso}(\Delta)$.
Then $R/R_\Delta$ is 
separable if and only if for each 
$e \in \Gamma_0$, there exist 
$f \in [e]$ and
$r \in C_{R_0}(R_\Lambda)$ satisfying
${\rm tr}_{\Gamma/\Delta}^f(r) = 1_{R_f}$.
\end{theorem}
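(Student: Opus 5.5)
Since $R$ need not be unital, I would first record the standard characterization of separability for extensions with local units. $R/R_\Delta$ is separable if and only if there is a family of elements $x_e = \sum_j a_j \otimes b_j \in 1_{R_e}(R\otimes_{R_\Delta}R)1_{R_e}$, for $e\in\Gamma_0$, such that $\mu(x_e)=1_{R_e}$, $\rho x_{d(\rho)} = x_{r(\rho)}\rho$ for every homogeneous $\rho\in R_\sigma$, and such that $\delta(r) := \sum_e r x_e$ is a well-defined bimodule section. Equivalently, there are ``separability idempotents'' indexed by $\Gamma_0$. I would prove both directions by passing through this description, working one connected component $[e]$ at a time, because $R$ decomposes as a direct product of the rings $R_{[e]}$ in a compatible way. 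The condition only asks for \emph{some} $f\in[e]$, so a reduction inside a component is needed. For this I would use strong grading: for $\sigma:f\to e$, conjugation $r\mapsto \gamma_\sigma(r)$ transports data at $f$ to data at $e$. I would check that $\gamma_\sigma$ sends $C_{R_0}(R_\Lambda)1_{R_f}$ into $C_{R_0}(R_\Lambda)1_{R_e}$, and that $\operatorname{tr}^e_{\Gamma/\Delta}(\gamma_\sigma(r)) = \gamma_\sigma(\operatorname{tr}^f_{\Gamma/\Delta}(r))$ up to reindexing of $T$. As a consequence, the existence of such an $r$ at one $f\in[e]$ is equivalent to its existence at every point of $[e]$.

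For sufficiency, given $r$ at $f$ (and hence at each $e$ in the component), I would set
\[
x_e := \sum_{g\in\Gamma_0^{\rm fin}}\ \sum_{\tau\in T_{g,e}}\ \sum_{i} u_\tau^{(i)} r \otimes v_{\tau^{-1}}^{(i)}.
\]
Then $\mu(x_e)=\operatorname{tr}^e_{\Gamma/\Delta}(r)=1_{R_e}$. The centralizing condition $\rho x_{d(\rho)}=x_{r(\rho)}\rho$ I would verify by mimicking Theohari-Apostolidi--Vavatsoulas (Theorem \ref{thm:miyashita}). For $\rho\in R_\sigma$, write $\rho u_\tau^{(i)}$ via the decomposition $\sigma\tau = \tau'\lambda$ with $\tau'\in T$ and $\lambda\in\Lambda$. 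Using $1=\sum u v$ expansions and the fact that elements of $R_\lambda$ move across $\otimes_{R_\Delta}$ and commute with $r$, the sum is rearranged. Independence of the choice of the $u,v$ and of $T$ (modulo $R_\Delta$ in the tensor) is needed here. I expect this step to be fiddly, but the group case essentially goes through. The point is that cosets $\tau\Lambda$ with $d(\tau)=g$ biject under left multiplication by $\sigma$, and the $\Delta$-but-not-$\Lambda$ parts are absorbed because the sum runs over all $g$ in $\Gamma_0^{\rm fin}$.

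For necessity, given a section $\delta$, I would put $x_e=\delta(1_{R_e})$ and project it using the decomposition $R\otimes_{R_\Delta}R \cong \bigoplus R_\sigma\otimes_{R_\Delta} R_{\tau}$ indexed by classes. Here I would use strong grading to identify $R_{\tau\Lambda}\otimes_{R_\Delta}R$ pieces, via $R_\tau\otimes_{R_\Delta}R_{\tau^{-1}}\cong R_\tau\otimes_{R_{d(\tau)}}\ldots$. The approach is to define an $R_0$-valued map $\phi$ that picks out the $R_\Lambda$-coefficient relative to $T$, and to set $r:=\sum_j \phi(a_j) b_j$ restricted to degree $e$. Bimodule-linearity of $\delta$ gives $r\in C_{R_0}(R_\Lambda)$, and $\mu\delta=\mathrm{id}$ gives $\operatorname{tr}^e(r)=1_{R_e}$. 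The main obstacle is showing that only classes with $d(\tau)\in\Gamma_0^{\rm fin}$ (finite index) contribute. Since $x_e$ is a finite sum, only finitely many classes $\tau\Lambda$ appear. Centrality under $R_\sigma$, $\sigma\in\Gamma(g)$, permutes the classes at $g$, so a nonzero component at $g$ forces the $\Gamma(g)$-orbit of $\Delta(g)$-cosets to be finite, that is, $g\in\Gamma_0^{\rm fin}$. I would handle this by the same projection argument as in \cite{CLP}, using strongness so that $R_\sigma$ acts faithfully on these components.
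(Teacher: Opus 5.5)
Your overall architecture matches the paper's. You use separability elements $x_e$ indexed by $\Gamma_0$ (Theorem~\ref{thm:sepidempotent}(a)) and reduce to a connected component. Your $x_e$ for sufficiency equals the paper's $\sum_\tau\gamma_\tau(r)w_\tau$, since $v_{\tau^{-1}}^{(j)}u_\tau^{(i)}\in R_{d(\tau)}$ commutes with $r$. For necessity you project $x_e$ onto diagonal components, as the paper does.

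The genuine gap is your reduction inside a component, which is only valid when $\Delta$ is normal (the situation of Lemma~\ref{lem: gamma_sigma restricts to}). For an arbitrary wide $\Delta$ and $\sigma\in\Gamma(f,e)$, the map $\gamma_\sigma$ is only guaranteed to send $C_{R_f}(R_{\Delta(f)})$ into the centralizer of $R_{\sigma\Delta(f)\sigma^{-1}}$. Moreover, the indices $[\Gamma(g):\Delta(g)]$ can vary along a component, so ${\rm tr}^e_{\Gamma/\Delta}(\gamma_\sigma(r))=\gamma_\sigma({\rm tr}^f_{\Gamma/\Delta}(r))$ fails in general. Concretely, take the groupoid of the example after Lemma~\ref{lem: |T_e,e|=|T_f,e|} with $G$ infinite, $R=B[\Gamma]$, $r=1_1$ and $\sigma\in\Gamma(1,2)$. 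Then ${\rm tr}^1_{\Gamma/\Delta}(1_1)=1_1$, whereas $\gamma_\sigma(1_1)=1_2$ and ${\rm tr}^2_{\Gamma/\Delta}(1_2)=0$ because $2\notin\Gamma_0^{\rm fin}$.

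The correct statement, which the paper uses, is Lemma~\ref{lem: trace and connected components}: for one and the same $r\in C_R(R_\Lambda)$, $\gamma_\sigma({\rm tr}^f_{\Gamma/\Delta}(r))={\rm tr}^e_{\Gamma/\Delta}(r)$. Here $r$ may have components at several objects $g\in\Gamma_0^{\rm fin}$, and the key fact is that $\tau\mapsto\tau^\sigma$ is a bijection $T_{g,f}\to T_{g,e}$ for each $g$. With this lemma and a single $r$ for the whole component, your sufficiency argument works. A single $r$ is also exactly what the compatibility $\rho x_{d(\rho)}=x_{r(\rho)}\rho$ needs; elements transported by $\gamma_\sigma$ would not give compatible $x_e$'s.

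The same localization breaks your necessity argument. An $r$ taken from $x_e$ alone and restricted to degree $e$ lies in $R_e$. In the example above, $R/R_\Delta$ is separable (Corollary~\ref{prop: sufficent conditions for separability} at the object $1$, where $\Delta(1)=\Gamma(1)$), yet every $r$ supported at $2$ has ${\rm tr}^2_{\Gamma/\Delta}(r)=0$. The element that works for $e=2$ is $1_1$, which lives at another object.

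What is missing is a way to move the information in $x_e$ back to the objects $f$ where it sits. The paper does this as follows:
\begin{itemize}
\item It takes the degree-$e$ diagonal components $z_{e,f,\tau,\tau,e}$ of $x_e$ for $\tau\in T_{f,e}$ and sets $c_{e,f,\tau,\tau,e}:=\mu(z_{e,f,\tau,\tau,e})$ and $c_f:=c_{f,f,f,f,f}\in C_{R_f}(R_{\Delta(f)})$.
\item It uses the compatibility between different base objects, $ax_f=x_ea$ for $a\in R_\tau$, together with Lemma~\ref{lem:t=gamma_sigma(s),s=gamma_sigma-1(t)}, to get $c_{e,f,\tau,\tau,e}=\gamma_\tau(c_f)$.
\item It sets $r^e:=\sum_{f\in F_e}c_f$, summed over the finitely many $f$ occurring in $x_e$, so that ${\rm tr}^e_{\Gamma/\Delta}(r^e)=\sum_{f,\tau}c_{e,f,\tau,\tau,e}=1_{R_e}$.
\end{itemize}
Your orbit idea then gives $F_e\subseteq\Gamma_0^{\rm fin}$: one nonzero $\gamma_\tau(c_f)$ forces all of them to be nonzero.

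One further caution, which applies equally to the paper's proof of (b)$\Rightarrow$(c). When $\Delta$ has morphisms between distinct objects, $R\otimes_{R_\Delta}R$ need not split along right $\Lambda$-classes. For example, with $R=M_2(S)$ and $\Delta=\Gamma$ the thin groupoid on $\{1,2\}$, one has $e_{11}\otimes e_{11}=e_{12}\otimes e_{21}\neq 0$. So the projections should be taken in $R\otimes_{R_\Lambda}R$. This costs nothing: $R_\Delta/R_\Lambda$ is separable (take $x_e:=w_\tau$ with $\tau\in\Delta(f,e)$ for a fixed $f$ in the $\Delta$-component of $e$). Hence separability of $R/R_\Delta$ gives separability of $R/R_\Lambda$ by Corollary~\ref{cor: C-B-A}(a).
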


Theorem \ref{thm:main} is a 
simultaneous generalization of 
Theorems 
\ref{thm:nastasescu},
\ref{thm:miyashita} and \ref{thm:CLP}.
Indeed:
\begin{itemize}

\item Specializing to 
$\Gamma = G$ a group and 
$\Delta = \{ e \}$, 
we obtain Theorem \ref{thm:nastasescu}.

\item Specializing to $\Gamma = G$ 
a group and $\Delta = H$
a subgroup of $G$, we 
obtain a \emph{sharpening} of 
Theorem \ref{thm:miyashita}, replacing
$Z(R_H)$ by the smaller subring 
$C_{R_e}(R_H)$ (see Corollary
\ref{cor:miyashita}).
In fact, there appears 
to be a gap in the proof of
\cite[Thm.~2.1]{TheohariApostolidi}.
Our proof of Theorem \ref{thm:main}
fills this gap (see Remark \ref{rem:gap}).

\item Specializing 
to $\Delta = \Gamma_0$,
we retrieve Theorem \ref{thm:CLP}.

\end{itemize}

Here is an outline of the article. 

In Section \ref{sec:preliminaries}, we fix our notation for categories and we recall some notions on separable functors, along with conventions for rings and modules, including unitary modules and their restriction and induction functors. 
In particular, we state a key result
(see Theorem~\ref{thm:sepidempotent}) 
from \cite{CLP}
on separability of certain functors 
for module categories 
over rings with enough idempotents 
which we will need in later sections. 

In Section 
\ref{sec:separabilitygeneralcase},
we develop the machinery needed to 
prove Theorem \ref{thm:main}. 

In Section 
\ref{sec:separabilitynormalcase},
we specialize Theorem \ref{thm:main}
to the case when $\Delta$ is a normal
subgroupoid of $\Gamma$, in particular
the case when $\Delta = \Gamma_0$.

In Section 
\ref{sec:objectcrossedproducts}, 
we apply Theorem \ref{thm:main} 
to object crossed products, 
including object twisted groupoid rings, 
classical groupoid rings
and matrix rings, as well as
crossed product algebras defined by
infinite separable field extensions.


\section{Preliminaries}\label{sec:preliminaries}

In this section, we state our conventions
and some useful results
on categories, separable functors, 
rings, modules and separable 
extensions of rings. 

\subsection{Separable functors}

We use the notation and conventions for categories from
Section~\ref{sec:introduction}, with groupoids replaced by arbitrary
categories. Thus, if $\C$ is a category, then $\C_0$ and $\C_1$
denote the classes of objects and morphisms of $\C$, respectively;
we write $\sigma \in \C$ to mean that $\sigma \in \C_1$,
denote the domain and range maps by $d$ and $r$,
write $\C(e,f)$ for the class of morphisms $e \to f$,
identify each object with its identity morphism,
and denote composition by juxtaposition.

Let $\C$ and $\D$ be categories, and let $\F : \C \to \D$ be a functor
(always assumed to be \emph{covariant}).
Following \cite{nastasescu1989}, we say that $\F$ is \emph{separable}
if for all $e,f \in \C_0$, there is a map
$\varphi_{e,f}^{\F} : \Hom_{\D}(\F(e),\F(f)) \to \Hom_{\C}(e,f)$
satisfying:

\begin{itemize}

\item[{\rm (SF1)}] 
$\varphi_{e,e'}^{\F}
( \F(\alpha) ) = \alpha$, and 

\item[{\rm (SF2)}] 
$\F(\beta) \sigma = 
\tau \F(\alpha)$ $\Rightarrow$ 
$\beta \varphi_{e,f}^{\F}(\sigma) = 
\varphi_{e',f'}^{\F}(\tau) \alpha$,

\end{itemize}
for $e,e',f,f' \in \C_0$,
$\alpha \in \Hom_{\C}(e,e')$, 
$\beta \in \Hom_{\C}(f,f')$,
$\sigma \in \Hom_{\D}(\F(e),\F(f))$, 
and $\tau \in \Hom_{\D}(\F(e'),\F(f'))$.
For future use, we record the following:

\begin{proposition}\label{prop:nastasescu}
Let $\F : \C \to \D$ and $\G : \D \to \E$
be functors.

\begin{enumerate}[{\rm (a)}]

\item 
If $\F$ and $\G$ are separable, 
then $\G \circ \F$ is separable. 

\item
If $\G \circ \F$ is separable, 
then $\F$ is separable.

\end{enumerate}
\end{proposition}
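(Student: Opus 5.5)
We verify both parts directly from (SF1) and (SF2), constructing the required maps explicitly. No earlier result is needed.

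For (a), suppose $\varphi^{\F}$ and $\varphi^{\G}$ witness separability of $\F$ and $\G$. For $e,f \in \C_0$ and $\sigma \in \Hom_{\E}(\G\F(e),\G\F(f))$, define
\[
\varphi^{\G\circ\F}_{e,f}(\sigma) := \varphi^{\F}_{e,f}\bigl(\varphi^{\G}_{\F(e),\F(f)}(\sigma)\bigr).
\]
For (SF1), take $\alpha \in \Hom_{\C}(e,e')$. Applying (SF1) for $\G$ to the morphism $\F(\alpha)$ gives $\varphi^{\G}(\G\F(\alpha)) = \F(\alpha)$. Applying (SF1) for $\F$ then gives $\varphi^{\F}(\F(\alpha)) = \alpha$.

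For (SF2), suppose $\G\F(\beta)\sigma = \tau\,\G\F(\alpha)$. Applying (SF2) for $\G$ with the morphisms $\F(\alpha),\F(\beta)$ of $\D$ yields
\[
\F(\beta)\,\varphi^{\G}(\sigma) = \varphi^{\G}(\tau)\,\F(\alpha).
\]
This is exactly the hypothesis of (SF2) for $\F$ with $\varphi^{\G}(\sigma)$ and $\varphi^{\G}(\tau)$ in place of $\sigma,\tau$. It gives $\beta\,\varphi^{\F}(\varphi^{\G}(\sigma)) = \varphi^{\F}(\varphi^{\G}(\tau))\,\alpha$, as required.

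For (b), let $\psi := \varphi^{\G\circ\F}$. For $\sigma \in \Hom_{\D}(\F(e),\F(f))$, define
\[
\varphi^{\F}_{e,f}(\sigma) := \psi_{e,f}(\G(\sigma)).
\]
(SF1) holds because $\G(\F(\alpha)) = (\G\circ\F)(\alpha)$, so $\psi$ sends it to $\alpha$. For (SF2), suppose $\F(\beta)\sigma = \tau\F(\alpha)$. Applying the functor $\G$ gives
\[
\G\F(\beta)\,\G(\sigma) = \G(\tau)\,\G\F(\alpha).
\]
(SF2) for $\G\circ\F$ then yields $\beta\,\psi(\G(\sigma)) = \psi(\G(\tau))\,\alpha$, which is the required identity.

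The only point needing care is index bookkeeping: we must check that each map is applied to hom-sets between the correct objects. The step requiring the most attention is (SF2) in (a). There we must note that $\F(\alpha)$ and $\F(\beta)$ are legitimate morphisms of $\D$ between the objects $\F(e),\F(e'),\F(f),\F(f')$, so that (SF2) for $\G$ applies with these as the morphisms $\alpha,\beta$.
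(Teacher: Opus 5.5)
Your proof is correct: the composite $\varphi^{\mathcal{F}}_{e,f}\circ\varphi^{\mathcal{G}}_{\mathcal{F}(e),\mathcal{F}(f)}$ for (a) and $\psi_{e,f}\circ\mathcal{G}$ for (b) are the standard choices, and your checks of (SF1) and (SF2), including applying (SF2) for $\mathcal{G}$ to $\mathcal{F}(\alpha)$ and $\mathcal{F}(\beta)$, go through. The paper gives no argument of its own and simply cites \cite[Lemma 1.1]{nastasescu1989}, whose proof is this same direct verification from the definitions.
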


\begin{proof}
See \cite[Lemma 1.1]{nastasescu1989}.
\end{proof}

\subsection{Modules}

Let $A$ be a ring.
By a left $A$-module we mean an additive group $M$
equipped with a biadditive map $A \times M \ni 
(a,m) \mapsto am \in M$ satisfying
$a ( b m ) = (a b) m$ for $a,b \in A$ 
and $m \in M$. Analogously, right $A$-modules
are defined.
We let ${}_A {\rm Mod}$ 
denote the category having 
left $A$-modules 
as objects and left $A$-module
homomorphisms as morphisms. 
Let $B$ be another ring. If $M$ is both a left 
$A$-module and a right $B$-module, 
then we say that $M$ is an $A$-$B$-bimodule if 
$(am)b = a(mb)$ for $a \in A$, $b \in B$ and $m \in M$. 
Recall that if $N$ is another $A$-$B$-bimodule, then a map
$f : M \to N$ is said to be an $A$-$B$-bimodule
homomorphism if it is simultaneously a left
$A$-module homomorphism and a right 
$B$-module homomorphism.

Let $M$ be a left $A$-module.
If $X \subseteq A$ and $Y \subseteq M$,
we let $XY$ denote the set of finite sums of
elements of the form $xy$, for $x \in X$ and $y \in Y$.
We say that $M$ is \emph{unitary} if $AM = M$.
We let ${}_A {\rm UMod}$ 
denote the full subcategory of
${}_A {\rm Mod}$ 
having unitary $A$-modules as objects.
Recall that $A$ is said to be \emph{idempotent} 
if $A$ is unitary as a left module over itself, 
that is if $AA = A$.

Suppose that $A/B$ is a ring extension 
where $A$ is idempotent.
To this ring extension we associate the
\begin{itemize}

\item \emph{restriction} functor 
${\rm Res}_{A/B} : 
{}_A {\rm UMod} \to 
{}_B {\rm Mod}$ 
which to a left $A$-module $M$ 
associates the left $B$-module $M$, 
where $B$ acts on $M$ via $A$, and the
 
\item \emph{induction} functor 
${\rm Ind}_{A/B} : 
{}_B {\rm UMod} \to {}_A {\rm UMod}$ 
which to a left $B$-module $N$
associates the left $A$-module 
$A \otimes_B N$,
where $A$ acts on $A \otimes_B N$ by left
multiplication, and
$B$ acts on $A$ from the right via $A$.

\end{itemize}

\begin{remark}
(a) Since $A$ is idempotent, 
${\rm Res}_{A/B}$ 
can be applied to $A$ itself.

(b) The functor ${\rm Res}_{A/B}$ 
does not always map
unitary modules to unitary modules. Indeed,
let $F$ be a field and put $A = F \times F$,
$B = \{ 0 \} \times F$ and $M = F \times \{ 0 \}$.
Let $A$ and $B$ act on $M$ by 
coordinatewise
multiplication. Then $AM = M$, but 
$BM = \{ (0,0) \} \neq M$.
Therefore, $M$ is unitary considered as a 
left 
module over $A$, but not over $B$.

(c) The functor ${\rm Ind}_{A/B}$ 
is well defined,
since $A(A \otimes_B N) = 
(AA) \otimes_B N = A \otimes_B N$.
\end{remark}

In \cite[Prop. 1.3]{nastasescu1989} the
following characterization of separability 
for the restriction and induction 
functors was obtained for unital rings:

\begin{theorem}\label{thm:oldsep}
Suppose that $A/B$ is an extension
of unital rings such that $1_A=1_B$.

\begin{enumerate}[{\rm (a)}]

\item ${\rm Res}_{A/B}$ 
is separable
if and only if $A/B$ is separable,
if and only if there exists 
$x \in A \otimes_B A$
such that $\mu(x) = 1_A$ and
$xa = ax$ for all $a \in A$.

\item ${\rm Ind}_{A/B}$ 
is separable
if and only if $B$ is a 
$B$-bimodule direct summand of $A$.

\end{enumerate}
\end{theorem}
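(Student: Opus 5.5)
The statement to be proved is Theorem~\ref{thm:oldsep}, which is cited from \cite[Prop.~1.3]{nastasescu1989}; I would still reprove it directly, since both parts are classical bookkeeping with the unit $1_A = 1_B$.

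For (a), I first show that separability of $A/B$ is equivalent to the existence of a Casimir-type element. If $\delta : A \to A \otimes_B A$ is an $A$-bimodule section of $\mu$, put $x := \delta(1_A)$. Then $\mu(x) = 1_A$, and bilinearity gives
\[
ax = \delta(a) = xa \quad \text{for all } a \in A .
\]
Conversely, given such an $x$, the map $\delta(a) := ax$ is an $A$-bimodule section of $\mu$.

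Next I relate this to separability of ${\rm Res}_{A/B}$. Since $A$ and $B$ are unital with the same unit, unitary modules are exactly the unital modules.
\begin{itemize}
\item Suppose $x = \sum_i a_i \otimes b_i$ is as above. For a $B$-linear map $g : M \to N$ between $A$-modules, define
\[
\varphi(g)(m) := \sum_i a_i \, g(b_i m).
\]
Centrality of $x$ in $A \otimes_B A$ makes $\varphi(g)$ $A$-linear. To check this, apply the balanced map $A \otimes_B A \to \Hom_{\mathbb Z}(M,N)$, $a \otimes b \mapsto (m \mapsto a\,g(bm))$, which is well defined because $g$ is $B$-linear. Then (SF1) follows from $\mu(x) = 1_A$, and (SF2) is naturality, checked directly.
\item Conversely, if ${\rm Res}_{A/B}$ is separable, apply $\varphi$ to the $B$-linear map $\pi : A \otimes_B A \to A \otimes_B A$, $a \otimes b \mapsto a \otimes b$, regarded from the source module $A \otimes_B A$ with its left $A$-structure. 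The cleaner route is to use the splitting of the counit: the natural $A$-map $A \otimes_B M \to M$ has a natural section. Evaluating this section at $M = A$ (a left $A$-module, with naturality in right multiplications, making it a bimodule map) gives the section $\delta$.
\end{itemize}

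This last step, which uses the Rafael-type criterion that separability is equivalent to the counit $\varepsilon$ splitting naturally, is where care is needed. I would prove that criterion inline: take $\nu_M := \varphi\bigl(m \mapsto 1 \otimes m\bigr)$, which is a $B$-linear section of $\varepsilon_M$; by (SF1) and (SF2), $\nu_M$ is $A$-linear and natural.

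For (b), the argument is dual, using the unit $\eta_N : N \to A \otimes_B N$, $n \mapsto 1 \otimes n$. By the same criterion, ${\rm Ind}_{A/B}$ is separable exactly when $\eta$ has a natural retraction.
\begin{itemize}
\item Given a $B$-bimodule retraction $E : A \to B$ with $E|_B = \id$, the maps $E \otimes \id_N : A \otimes_B N \to B \otimes_B N \cong N$ form a natural retraction.
\item Conversely, evaluate the retraction at $N = B$, using naturality with respect to the right multiplications $B \to B$. This gives a $B$-bimodule map $A \cong A \otimes_B B \to B$ restricting to the identity on $B$, so $B$ is a $B$-bimodule direct summand of $A$.
\end{itemize}

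The main obstacle is the general Rafael criterion linking (SF1)–(SF2) to a natural splitting of the (co)unit. I would prove it first as a lemma, and then both parts reduce to evaluating at the regular module.
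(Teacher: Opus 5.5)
The paper gives no proof of this statement; it quotes it from \cite[Prop.~1.3]{nastasescu1989}. Your argument is therefore a self-contained replacement for a citation. It is correct, and it is essentially the standard proof.

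\textbf{Part (a).} The operator $\varphi(g)(m)=\sum_i a_i\,g(b_im)$ is well defined through the balanced map $A\otimes_BA\to\Hom_{\mathbb{Z}}(M,N)$. It satisfies (SF1) because $\mu(x)=1_A$, and (SF2) by direct computation. The converse via $\nu_M:=\varphi(m\mapsto 1\otimes m)$ also works:
\begin{itemize}
\item Applying (SF2) to $\varepsilon_M\circ(1\otimes -)=\id_M$, together with (SF1), gives $\varepsilon_M\nu_M=\id_M$.
\item Naturality with respect to the right multiplications $\rho_c\colon A\to A$ makes $\nu_A$ right $A$-linear. Hence $\nu_A$ is a bimodule section of $\mu=\varepsilon_A$.
\end{itemize}
The sentence about applying $\varphi$ to the identity map $\pi$ of $A\otimes_BA$ is vacuous, since by (SF1) it just returns $\pi$. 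Drop it.

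\textbf{Part (b).} Both evaluations at $N=B$ are right. However, here you use \emph{both} directions of Rafael's criterion for the left adjoint ${\rm Ind}_{A/B}$, and you have written out neither. Each is a one-line check:
\begin{itemize}
\item Given a natural retraction $\theta$ of $\eta$, put $\varphi(g):=\theta_{N'}\circ g\circ\eta_N$. Then (SF1) and (SF2) follow from the naturality of $\eta$ and $\theta$.
\item Given $\varphi$, put $\theta_N:=\varphi(\varepsilon_{A\otimes_BN})$. Apply (SF2) to $\varepsilon_{A\otimes_BN}\circ(\id_A\otimes\eta_N)=\id_{A\otimes_BN}$ to get $\theta_N\eta_N=\id_N$. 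Naturality of $\theta$ follows from naturality of $\varepsilon$, again via (SF2).
\end{itemize}
With these two lines added, the proof is complete.

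\textbf{Comparison.} Your route makes the statement independent of \cite{nastasescu1989}. It also exposes the mechanism: split the counit (for ${\rm Res}$) or the unit (for ${\rm Ind}$) naturally, then evaluate at the regular module. The paper's citation simply defers all of this.
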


Recall that two idempotents $u$ and $v$
in $A$ are called \emph{orthogonal} if $uv=0$.
Following Fuller \cite{fuller1976}, 
we say that $A$ has 
\emph{enough idempotents} if there is a 
nonemtpy set $U$ of nonzero orthogonal 
idempotents in $A$
(called a complete set of idempotents for $A$)
such that $A = \oplus_{u \in U} Au = 
\oplus_{u \in U} uA$. In that case, clearly, 
$A$ is idempotent.

\begin{lemma}\label{lem:um=m}
Let $A$ be a ring with enough idempotents 
having $U$ as a complete set of idempotents.
Suppose that $M$ is a unitary left $A$-module.
Take $m \in M$. Then $um = 0$ for all but finitely 
many $u \in U$ and $\sum_{u \in U} u m = m$.
\end{lemma}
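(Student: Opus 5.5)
The plan is to fix a complete set $U$ of idempotents and write $M=AM$. Every $m\in M$ is then a finite sum $m=\sum_{k=1}^n a_k m_k$ with $a_k\in A$ and $m_k\in M$. Since $A=\oplus_{u\in U} Au$, each $a_k$ has only finitely many nonzero components $a_k u$. We have $a_k u\in Au$, and because $u$ is idempotent, $(a_k u)u=a_k u$. The components also satisfy $a_k=\sum_{u\in U} a_k u$, a finite sum.

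I will work with the decomposition $A=\oplus_{u\in U} uA$ rather than $Au$, because the action on $M$ is on the left. Write $a_k=\sum_{u\in F_k} u a_k$, where $F_k\subseteq U$ is the finite set of $u$ with $ua_k\neq 0$. Let $F=\bigcup_k F_k$, which is finite.

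For $u\in U$ we compute $um=\sum_k u a_k m_k$. Since the $u$'s are orthogonal idempotents, $u(u'a_k)=\delta_{u,u'}\,u'a_k$ for $u'\in U$. Hence $ua_k = u\sum_{u'\in F_k} u'a_k$ equals $ua_k$ when $u\in F_k$ and $0$ otherwise. In particular, for $u\notin F$ we get $ua_k=0$ for all $k$, so $um=0$. This proves the first claim.

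For the second claim,
\[
\sum_{u\in U} um=\sum_{u\in F}\sum_k ua_k m_k=\sum_k\Bigl(\sum_{u\in F} ua_k\Bigr)m_k=\sum_k a_k m_k=m,
\]
where we used $\sum_{u\in F} ua_k=\sum_{u\in F_k} ua_k=a_k$.

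The only point needing care, and the expected obstacle, is that $A$ need not be unital, so no global unit is available. Everything has to go through the finite idempotent decomposition of the coefficients $a_k$ together with unitarity of $M$, which guarantees that such a representation of $m$ exists. No earlier result beyond the definitions is needed.
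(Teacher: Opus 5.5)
Your proof is correct and takes the same approach as the paper. You write $m=\sum_k a_k m_k$ using $AM=M$, choose a finite set of idempotents (your $F$, the paper's $V$) outside of which $ua_k=0$ and over which $\sum ua_k=a_k$, and both claims follow. The one step you assert without proof, that the $uA$-component of $a_k$ in $A=\oplus_{u\in U}uA$ is exactly $ua_k$, follows immediately from orthogonality and idempotency; the paper likewise leaves this implicit.
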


\begin{proof}
Since $AM=M$ there are 
$n \in \N$, $a_1,\ldots,a_n \in A$ and
$m_1,\ldots,m_n \in M$ with $m = \sum_{i=1}^n a_i m_i$.
From the assumptions it follows that 
there is a finite subset $V$ of $U$ such that
for each $i \in \{ 1,\ldots,n \}$, and each 
$u \in U \setminus V$, 
$\sum_{v \in V} v a_i = a_i$ and 
$ua_i = 0$. Hence, for each
$u \in U \setminus V$,
$um = \sum_{i=1}^n u a_i m_i = 0$, and thus 
$\sum_{u \in U} um = \sum_{v \in V} v m +
\sum_{u \in U \setminus V} um = 
\sum_{i=1}^n \sum_{v \in V} v a_i m_i + 0 =
\sum_{i=1}^n a_i m_i = m$.
\end{proof}

\begin{proposition}
\label{prop:umod-->umod}
Let $A/B$ be an extension of
rings with enough idempotents having 
a common complete set of idempotents.
Then ${\rm Res}_{A/B} : 
{}_A {\rm UMod} \to {}_B {\rm UMod}$.
\end{proposition}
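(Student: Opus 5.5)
We need to show that if $M$ is a unitary left $A$-module, then $M$, regarded as a left $B$-module via restriction, is again unitary, i.e. $BM = M$. Throughout, $U$ denotes a common complete set of idempotents for $A$ and $B$; in particular $U \subseteq B \subseteq A$.

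Take an arbitrary $m \in M$. Since $M$ is a unitary left $A$-module and $A$ has enough idempotents with complete set $U$, Lemma \ref{lem:um=m} gives two facts: $um = 0$ for all but finitely many $u \in U$, and
\[
m = \sum_{u \in U} um .
\]
The sum on the right is therefore a finite sum. Each summand $um$ has $u \in U \subseteq B$, so $um \in BM$, and hence $m \in BM$. This proves $M \subseteq BM$.

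The reverse inclusion $BM \subseteq M$ holds trivially, because $M$ is closed under the action of $B$ inherited from $A$. Thus $BM = M$, so ${\rm Res}_{A/B}(M)$ is a unitary left $B$-module. On morphisms, ${\rm Res}_{A/B}$ acts as the identity on underlying maps, and an $A$-linear map is in particular $B$-linear. It follows that ${\rm Res}_{A/B}$ indeed takes values in ${}_B{\rm UMod}$.

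There is no real obstacle here. The only point that needs care is that the idempotents used in Lemma \ref{lem:um=m} must lie in $B$. This is exactly what the hypothesis of a \emph{common} complete set of idempotents guarantees, and the remark preceding Theorem \ref{thm:oldsep} shows that without such a hypothesis the statement can fail.
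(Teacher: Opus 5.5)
Your proof is correct and follows the paper's approach: the paper's proof is the one-line remark that the result follows immediately from Lemma~\ref{lem:um=m}, and you simply spell this out by writing $m=\sum_{u\in U}um$ as a finite sum with each $u\in U\subseteq B$, so that $m \in BM$.
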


\begin{proof}
This follows immediately from 
Lemma \ref{lem:um=m}.
\end{proof}

In \cite[Prop.~26 and Prop.~27]{CLP}, the following generalization of Theorem~\ref{thm:oldsep} was established for rings with enough idempotents.

\begin{theorem}\label{thm:sepidempotent}
Let $A/B$ be an extension of
rings with enough idempotents having
a common complete set $U$ of idempotents.
\begin{enumerate}[{\rm (a)}]

\item ${\rm Res}_{A/B}$
is separable if and only if
$A/B$ is separable,
if and only if
for each $u \in U$ there exists
$x_u \in \sum_{v \in U} uAv \otimes vAu$
such that, for every $v \in U$
and every $a \in uAv$, the equalities
$\mu(x_u) = u$ and $x_u a = a x_v$ hold.

\item ${\rm Ind}_{A/B}$
is separable if and only if
$B$ is a
$B$-bimodule direct summand of $A$,
if and only if
for all $v,w \in U$, $vBw$ is a
$vBv$-$wBw$-bimodule direct summand of
$vAw$.
\end{enumerate}
\end{theorem}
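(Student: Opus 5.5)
The statement immediately above is Theorem~\ref{thm:sepidempotent}, which the paper attributes to \cite[Prop.~26 and Prop.~27]{CLP}. The plan is to reprove it by transporting the unital arguments of Theorem~\ref{thm:oldsep} to the local pieces $uAv$, using Lemma~\ref{lem:um=m} and Proposition~\ref{prop:umod-->umod} to control unitary modules.

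For (a), I will first prove that $A/B$ is separable if and only if the local elements $x_u$ exist.

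\emph{From separability to the $x_u$.} Suppose $\delta : A \to A\otimes_B A$ is an $A$-bimodule section of $\mu$. For each $u \in U$ put $x_u := \delta(u) = u\,\delta(u)\,u$. Writing $u = \sum_{v} uv$ via Lemma~\ref{lem:um=m}, and using $v^2 = v$ with $v \in B$ so that $v$ can be moved across the tensor sign, one sees that $x_u$ lies in $\sum_v uAv\otimes_B vAu$. Then $\mu(x_u) = u$ is immediate. For $a \in uAv$ we have $x_u a = \delta(u)a = \delta(ua) = \delta(a) = \delta(av) = a\,\delta(v) = a x_v$.

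\emph{From the $x_u$ to separability.} Conversely, given the family $(x_u)$, define $\delta(a) := \sum_{u,v} x_u\, uav$. This is a finite sum by Lemma~\ref{lem:um=m}. The relation $x_u\, uav = uav\, x_v$ gives left and right $A$-linearity after decomposing $a' a$ into local pieces, and $\mu\delta(a) = \sum u\,uav = a$.

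\emph{Relating $\operatorname{Res}_{A/B}$ to separability.} Next I show that separability of $\operatorname{Res}_{A/B}$ is equivalent to the above.
\begin{itemize}
\item If $\operatorname{Res}_{A/B}$ is separable, apply $\varphi$ from (SF1)--(SF2) to the $B$-map $A \to A\otimes_B A$, $a \mapsto u\otimes a$, restricted to $Au$. The naturality in (SF2), tested with right multiplications by elements of $uAv$, produces $x_u$ with the required properties. This mirrors the proof of \cite[Prop.~1.3]{nastasescu1989} with $1$ replaced by $u$.
\item Conversely, given $\delta$, the standard formula $\varphi(g)(m) := \sum_i a_i\, g(b_i m)$, where $\delta(u) = \sum_i a_i\otimes b_i$ is summed over the $u$ with $um \neq 0$, defines a splitting on unitary modules. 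Verifying (SF1) uses $\mu(x_u) = u$ together with Lemma~\ref{lem:um=m}; (SF2) is naturality.
\end{itemize}

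For (b), I will relate three conditions.
\begin{itemize}
\item A $B$-bimodule retraction $E : A \to B$ restricts to maps $vAw \to vBw$, since $E(vaw) = vE(a)w$. These are $vBv$-$wBw$-bimodule retractions.
\item Conversely, local retractions $E_{v,w}$ assemble into $E(a) := \sum E_{v,w}(vaw)$. Bilinearity over $B$ follows because $b \in B$ decomposes as $\sum v'bw'$ and the grading by $U$ is respected.
\item Separability of $\operatorname{Ind}_{A/B}$ gives such an $E$ by applying $\varphi$ to the multiplication $A\otimes_B Bv \to Bv$ on each $Bv$ and gluing over $v$. Conversely, $E$ yields $\varphi(g) := E'\circ g\circ\iota$, with $E' = E\otimes \mathrm{id}$ and $N \cong B\otimes_B N$ valid for unitary $N$.
\end{itemize}

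The main obstacle is the direction from functor separability to the existence of the $x_u$ (and of $E$). The issue is that $A$ lacks a unit, so one must work with the unitary modules $Au$ and $Bv$ and check carefully that the local choices are compatible across different $u$. I would handle this by applying (SF2) to the morphisms $Au \to Av$ given by right multiplication by $a \in uAv$.
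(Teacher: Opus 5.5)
\emph{The paper does not prove this theorem; it only cites \cite{CLP}.} Your argument therefore has to stand on its own. It breaks at the local-to-global step in (b), and that step cannot be repaired.

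\textbf{The gap in (b).} A family of $vBv$-$wBw$-bimodule retractions $E_{v,w}\colon vAw\to vBw$ only controls multiplication by the diagonal corners of $B$. For $E(a):=\sum_{v,w}E_{v,w}(vaw)$ to be left $B$-linear you need $E_{v,w}(xy)=x\,E_{v',w}(y)$ for $x\in vBv'$, $y\in v'Aw$ with $v'\neq v$, and the mirror condition on the right. The remark that ``the grading by $U$ is respected'' does not give this.

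In fact the implication is false. Let $k$ be a field, $A=M_2(k)$, $B$ the upper triangular matrices, and $U=\{e_{11},e_{22}\}$.
\begin{itemize}
\item The third condition holds. We have $e_{11}Be_{11}=e_{11}Ae_{11}$, $e_{22}Be_{22}=e_{22}Ae_{22}$ and $e_{11}Be_{22}=e_{11}Ae_{22}$. Also $e_{22}Be_{11}=\{0\}$ is trivially a summand of $e_{22}Ae_{11}=ke_{21}$.
\item The second condition fails. A $B$-bimodule retraction $E\colon A\to B$ would give $E(e_{21})=e_{22}E(e_{21})e_{11}\in e_{22}Be_{11}=\{0\}$. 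Hence $e_{11}=E(e_{11})=E(e_{12}e_{21})=e_{12}E(e_{21})=0$, a contradiction.
\item By Theorem~\ref{thm:oldsep}(b), ${\rm Ind}_{A/B}$ is not separable either.
\end{itemize}
So in (b) only ``second $\Rightarrow$ third'' (your restriction argument) is valid. A correct local criterion must include the off-diagonal compatibilities above. The paper's later use of (b) for $R/R_\Delta$ still holds, because there $R_{\Gamma\setminus\Delta}$ is a global $R_\Delta$-sub-bimodule complement of $R_\Delta$.

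\textbf{A repairable error in (a).} The map $Au\to A\otimes_B Au$, $a\mapsto u\otimes a$, cannot be fed into $\varphi$, for two reasons.
\begin{itemize}
\item It is not left $B$-linear: $u\otimes ba=ub\otimes a$, whereas $b(u\otimes a)=bu\otimes a$. These differ for $0\neq b\in vBu$, $v\neq u$, and $a=u$.
\item It is not a section of $\mu$: $\mu(u\otimes a)=ua\neq a$ for general $a\in Au$.
\end{itemize}
Use $\iota_u(a):=\sum_{w\in U}w\otimes wa$ instead. It is left $B$-linear, since $\sum_w w\otimes wba=\sum_{w,w'}wbw'\otimes w'a=b\,\iota_u(a)$. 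It satisfies $\mu\circ\iota_u=\mathrm{id}$ by Lemma~\ref{lem:um=m}. Now put $x_u:=\varphi(\iota_u)(u)$. Applying (SF1)--(SF2) to $\mu$ gives $\mu(x_u)=u$, and applying them to right multiplication $Au\to Av$ by $a\in uAv$ gives $x_ua=ax_v$, as you intended.

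Similarly, in (b) there is no multiplication map $A\otimes_B Bv\to Bv$. The morphism to feed into $\varphi$ is ${\rm Ind}(Av)=A\otimes_B Av\to Av\cong A\otimes_B Bv={\rm Ind}(Bv)$. This yields left $B$-linear retractions $Av\to Bv$, which glue as you describe.

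The remaining steps are sound: the equivalence between separability and the $x_u$, the construction of $\varphi$ from $\delta$, and the passage between $E$ and separability of ${\rm Ind}_{A/B}$.
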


In \cite{lundstrom2026}, a version of
Theorem \ref{thm:sepidempotent} was
proved in the context of firm modules.

\begin{corollary}
\label{cor: C-B-A}
Let $A/B$ and $B/C$ be extensions of
rings with enough idempotents having
a common complete set of idempotents.
\begin{enumerate}[\rm (a)]

\item If $A/B$ and $B/C$ are separable, then $A/C$ is separable.

\item If $A/C$ is separable, then $A/B$ is separable.

\end{enumerate}
\end{corollary}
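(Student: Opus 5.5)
The corollary follows from Theorem~\ref{thm:sepidempotent}(a) together with Proposition~\ref{prop:nastasescu}. The idea is to translate separability of each ring extension into separability of the corresponding restriction functor, and then use how separable functors compose.

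Let $U$ be the common complete set of idempotents for $A$, $B$ and $C$. Then Proposition~\ref{prop:umod-->umod} applies to each of the extensions $A/B$, $B/C$ and $A/C$, so the restriction functors
\[
{\rm Res}_{A/B} : {}_A{\rm UMod}\to {}_B{\rm UMod},\quad
{\rm Res}_{B/C} : {}_B{\rm UMod}\to {}_C{\rm UMod},\quad
{\rm Res}_{A/C} : {}_A{\rm UMod}\to {}_C{\rm UMod}
\]
are well defined between categories of unitary modules. These categories are full subcategories of the module categories, so separability of these functors is unaffected by whether we regard the target as ${}_B{\rm UMod}$ or ${}_B{\rm Mod}$: conditions (SF1) and (SF2) involve only morphisms between objects in the image. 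Restricting a module first to $B$ and then to $C$ is the same as restricting it directly to $C$, with the same action. Hence
\[
{\rm Res}_{A/C} = {\rm Res}_{B/C}\circ {\rm Res}_{A/B}.
\]

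For (a), assume $A/B$ and $B/C$ are separable. By Theorem~\ref{thm:sepidempotent}(a), both ${\rm Res}_{A/B}$ and ${\rm Res}_{B/C}$ are separable. Proposition~\ref{prop:nastasescu}(a) then shows that their composite ${\rm Res}_{A/C}$ is separable, and applying Theorem~\ref{thm:sepidempotent}(a) again gives that $A/C$ is separable.

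For (b), assume $A/C$ is separable. Then ${\rm Res}_{A/C} = {\rm Res}_{B/C}\circ {\rm Res}_{A/B}$ is separable. Proposition~\ref{prop:nastasescu}(b) states that if $\G\circ\F$ is separable then $\F$ is separable, so the first factor ${\rm Res}_{A/B}$ is separable. By Theorem~\ref{thm:sepidempotent}(a), $A/B$ is separable.

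The only point that needs care is matching the codomain conventions. Theorem~\ref{thm:sepidempotent} is stated with target ${}_B{\rm UMod}$, which Proposition~\ref{prop:umod-->umod} justifies, and this is exactly what makes the composite typecheck. No other obstacle is expected.
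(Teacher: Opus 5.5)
Your proposal is correct and follows the paper's proof: you use Theorem~\ref{thm:sepidempotent}(a) to turn separability of each ring extension into separability of its restriction functor, note that ${\rm Res}_{A/C} = {\rm Res}_{B/C}\circ{\rm Res}_{A/B}$, and apply Proposition~\ref{prop:nastasescu}(a) for part (a) and (b) for part (b). Your remark on codomains is a useful clarification that the paper leaves implicit: ${}_B{\rm UMod}$ is a full subcategory of ${}_B{\rm Mod}$, and Proposition~\ref{prop:umod-->umod} guarantees the functors compose.
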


\begin{proof}
By Theorem~\ref{thm:sepidempotent}(a),
separability of $A/B$ and $B/C$ is 
equivalent to separability of 
the functors
${\rm Res}_{A/B} : 
{}_A {\rm UMod} \to {}_B {\rm UMod}$ 
and 
${\rm Res}_{B/C}:
{}_B {\rm UMod} \to {}_C {\rm UMod}$. 
Since 
${\rm Res}_{B/C} \circ  
{\rm Res}_{A/B} =
{\rm Res}_{A/C}$, (a) and
(b) follow from 
Proposition
\ref{prop:nastasescu}.
\end{proof}

\begin{proposition}\label{prop:partition}
Let $A/B$ be an extension of
rings with enough idempotents having 
a common complete set $U$ of idempotents.
Suppose that there is a partition
$(U_i)_{i \in I}$ of $U$ such that
$A = \oplus_{i \in I} (U_i A U_i)$ and 
$B = \oplus_{i \in I} (U_i B U_i)$.
Then $A/B$ is separable if and only 
if for each $i \in I$, the ring extension
$(U_i A U_i)/(U_i B U_i)$ is separable.
\end{proposition}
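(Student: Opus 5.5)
The plan is to use the characterization in Theorem~\ref{thm:sepidempotent}(a) and observe that the separability data decomposes along the blocks of the partition. Throughout, write $A_i := U_i A U_i$ and $B_i := U_i B U_i$. Here $U_i A U_i$ means the set of finite sums of products $uav$ with $u,v \in U_i$, $a \in A$.

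First I will check that each $A_i/B_i$ is again an extension of rings with enough idempotents, with common complete set $U_i$. Since $A = \oplus_{u \in U} Au$ and the decomposition $A = \oplus_i A_i$ holds, for $u \in U_i$ we get $A_i u = Au$ and $uA_i = uA$. The reason is that $uAv = 0$ whenever $u \in U_i$ and $v \in U_j$ with $j \neq i$, as the cross terms must vanish in a direct sum decomposition of the given form. Hence $A_i = \oplus_{u \in U_i} A_i u = \oplus_{u\in U_i} uA_i$, and similarly for $B_i$. In particular $A_i$ is a ring: it is closed under multiplication because $A_iA_j=0$ for $i\neq j$.

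Next I apply Theorem~\ref{thm:sepidempotent}(a) to both $A/B$ and each $A_i/B_i$. For $u \in U_i$, the relevant module is
\[
\sum_{v \in U} uAv \otimes_B vAu = \sum_{v \in U_i} uA_iv \otimes vA_iu,
\]
since $uAv = 0$ for $v \notin U_i$. The condition "$x_u a = a x_v$ for every $v\in U$ and every $a \in uAv$" is vacuous unless $v \in U_i$, and then $a\in uA_iv$.

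The one point needing care, and the main (though mild) obstacle, is identifying the tensor products: $uA_iv \otimes_{B_i} vA_iu$ versus $uAv \otimes_B vAu$. I will argue that $B = \oplus_j B_j$ with $B_jA_i = A_iB_j = 0$ for $j\ne i$. Consequently $A_i\otimes_B A_i \cong A_i \otimes_{B_i} A_i$ canonically, because the relations coming from $B_j$ with $j\neq i$ are trivial: both sides of $a b\otimes a' = a\otimes ba'$ vanish when $b \in B_j$. Likewise $uAv \otimes_B vAu = uA_iv\otimes_{B_i} vA_iu$ as subgroups of $A\otimes_B A$. The multiplication maps correspond under these identifications.

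With this identification, a family $(x_u)_{u\in U}$ witnessing separability of $A/B$ restricts to families $(x_u)_{u\in U_i}$ witnessing separability of each $A_i/B_i$. Conversely, the union of witnessing families over all $i$ witnesses separability of $A/B$. This proves both directions.
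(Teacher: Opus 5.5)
Your proof is correct. Its core is the same as the paper's: the block identification $A\otimes_B A\cong\oplus_i A_i\otimes_{B_i}A_i$, compatible with the actions and with $\mu$. Where you differ is in how separability is transported across this identification.

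The paper works straight from the definition. It projects a bimodule section $\sigma$ of $\mu$ onto the $i$-th summand to get sections $\sigma_i$ of $\mu_i$, and it sums the $\sigma_i$ for the converse. You instead go through the local criterion of Theorem~\ref{thm:sepidempotent}(a), restricting or uniting the families $(x_u)$.

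Your route needs the extra check that $A_i/B_i$ again has enough idempotents with common complete set $U_i$, which you supply. In return the compatibility conditions are nearly automatic, because $x_ua=ax_v$ is only required for $a\in uAv$, and $uAv=0$ across blocks. The paper's route is more elementary, since it uses nothing beyond the definition of separability.

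The one step you compress is the claim that $uA_iv\otimes_{B_i}vA_iu$ sits inside $A\otimes_B A$ ``as subgroups''. In the forward direction you need the canonical map $A_i\otimes_{B_i}A_i\to A\otimes_B A$ to be injective, in order to pull $\mu(x_u)=u$ and $x_ua=ax_v$ back into $A_i\otimes_{B_i}A_i$. This follows at once from the fact that each $A_j$ is a $B$-sub-bimodule of $A$, because $A_jB_k=B_kA_j=0$ for $k\neq j$. Hence $A=\oplus_jA_j$ as $B$-bimodules and $A\otimes_BA=\oplus_{j,k}A_j\otimes_BA_k$. The paper also shows the cross terms vanish, via $x\otimes y=xe\otimes y=x\otimes ey=0$.

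A minor point: $A_i$ is closed under multiplication simply because $U_iAU_i\cdot U_iAU_i\subseteq U_iAU_i$, not because $A_iA_j=0$.
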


\begin{proof}
Set $A_i := U_i A U_i$ and $B_i := U_i B U_i$ for each $i \in I$. Then
$A = \oplus_{i \in I} A_i$,
$B = \oplus_{i \in I} B_i$,
and $A_i A_j = 0$ for $i \neq j$.
Also, $A_i \otimes_B A_j = 0$ for 
$i \neq j$. Indeed, let $x \in A_i$ and 
$y \in A_j$. 
Since $x$ is a finite sum of terms 
$uav$ with $u,v \in U_i$,
the sum of the finitely many right idempotents $v$ occurring is an 
idempotent $e \in B_i$ satisfying $xe = x$. 
As $i \neq j$, we have $ey = 0$, and hence
$x \otimes y = xe \otimes y = 
x \otimes ey = 
x \otimes 0 = 0$ in $A \otimes_B A$.
Next, for each $i \in I$, define the map
$\varphi_i : A_i \otimes_B A_i 
\to A_i \otimes_{B_i} A_i$, by
$\varphi_i(x \otimes y) = x \otimes y$.
To see that $\varphi_i$ is well defined, let $x,y \in A_i$ and $b = \sum_{j \in I} b_j \in B$ with $b_j \in B_j$. Since $A_i B_j = 0 = B_j A_i$ for $j \neq i$, we have $xb = x b_i$ and $by = b_i y$, and thus
$xb \otimes y = x b_i \otimes y = x \otimes b_i y = x \otimes by$
in $A_i \otimes_{B_i} A_i$. Hence $\varphi_i$ is well defined. It is clearly $A_i$-bilinear, and its inverse is given by the same formula, so $\varphi_i$ is an isomorphism of $A_i$-bimodules.
Hence, there is an isomorphism of 
$A$-bimodules
$A \otimes_B A \cong 
\oplus_{i \in I} 
\left( A_i \otimes_{B_i} A_i \right)$,
where the $A$-bimodule structure on 
each summand is induced via the inclusion 
$A_i \subseteq A$. Under this 
identification, the multiplication map
$\mu : A \otimes_B A \to A$
corresponds to the direct sum of the multiplication maps
$\mu_i : A_i \otimes_{B_i} A_i \to A_i$.

Suppose that $A/B$ is separable. Then there exists an $A$-bimodule homomorphism $\sigma : A \to A \otimes_B A$ such that $\mu \circ \sigma = \mathrm{id}_A$. Composing $\sigma$ with the projection onto the $i$-th summand yields an $A_i$-bimodule homomorphism
$\sigma_i : 
A_i \to A_i \otimes_{B_i} A_i$
satisfying $\mu_i \circ \sigma_i = \mathrm{id}_{A_i}$. Thus each $A_i/B_i$ is separable.

Conversely, suppose that each $A_i/B_i$ is separable. For each $i \in I$, suppose that
$\sigma_i : A_i \to A_i \otimes_{B_i} A_i$ 
is an $A_i$-bimodule homomorphism with
$\mu_i \circ \sigma_i = \mathrm{id}_{A_i}$. Define
$\sigma : A \to A \otimes_B A$
by
$\sigma ( \sum_{i \in I} x_i) := 
\sum_{i \in I} \sigma_i(x_i)$,
where $x_i \in A_i$ and only finitely many $x_i$ are nonzero. Then $\sigma$ is a well-defined $A$-bimodule homomorphism, and $\mu \circ \sigma = \mathrm{id}_A$. Hence $A/B$ is separable.
\end{proof}

\section{Separability: general case}\label{sec:separabilitygeneralcase}

In this section, we prove
Theorem \ref{thm:main}
(see Theorem 
\ref{prop: separability R over R_Delta}).
Throughout this section, we keep the 
notation used in 
Section \ref{sec:introduction} and
we make the following assumptions:

\begin{itemize}

\item the ring $R$ is graded by
$\Gamma$, and this grading is object unital;

\item we put $1_e := 1_{R_e}$,
for $e \in \Gamma_0$, and 
$R_0 := R_{\Gamma_0}$;

\item $\Delta$ is 
a wide subgroupoid of $\Gamma$ and
we set $\Lambda := {\rm Iso}(\Delta)
= \cup_{e \in \Gamma_0} \Delta(e)$.

\end{itemize}

Note that since $\Delta$ is a wide 
subgroupoid of $\Gamma$, 
$R$ and $R_\Delta$ are rings 
with enough idempotents having a 
common complete set of idempotents,
namely $\{ 1_e \}_{e \in \Gamma_0}$.

\begin{proposition}
${\rm Ind}_{R/{R_\Delta}} :
{}_{R_\Delta} {\rm UMod} \to 
{}_R {\rm UMod}$ is separable.
\end{proposition}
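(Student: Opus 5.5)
By Theorem~\ref{thm:sepidempotent}(b), which applies because $R$ and $R_\Delta$ have the common complete set of idempotents $\{1_e\}_{e\in\Gamma_0}$, it suffices to show that $R_\Delta$ is an $R_\Delta$-bimodule direct summand of $R$. The natural candidate for a complement is $R_{\Gamma\setminus\Delta} := \oplus_{\sigma \in \Gamma\setminus\Delta} R_\sigma$. Since $R = \oplus_{\sigma\in\Gamma} R_\sigma$, we have $R = R_\Delta \oplus R_{\Gamma\setminus\Delta}$ as additive groups. Equivalently, the projection $\pi : R \to R_\Delta$, $\sum_\sigma r_\sigma \mapsto \sum_{\delta\in\Delta} r_\delta$, is a retraction of the inclusion, and we must show that $\pi$ is an $R_\Delta$-bimodule map.

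The key step is to show that $R_{\Gamma\setminus\Delta}$ is stable under left and right multiplication by $R_\Delta$. Take $\delta \in \Delta$ and $\sigma \in \Gamma\setminus\Delta$. If $(\delta,\sigma)\notin\Gamma_2$, then $R_\delta R_\sigma=\{0\}$. Otherwise $R_\delta R_\sigma \subseteq R_{\delta\sigma}$. If $\delta\sigma$ were in $\Delta$, then $\sigma = \delta^{-1}(\delta\sigma)\in\Delta$, because $\Delta$ is a subgroupoid and hence closed under inverses and composition. This is a contradiction, so $\delta\sigma\in\Gamma\setminus\Delta$. The right-hand case $R_\sigma R_\delta$ is symmetric. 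Likewise $R_\Delta R_\Delta \subseteq R_\Delta$, so both summands are $R_\Delta$-sub-bimodules and $\pi$ is $R_\Delta$-bilinear.

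Concretely, the checks come down to grading bookkeeping. One verifies that $\pi(br)=b\pi(r)$ and $\pi(rb)=\pi(r)b$ for homogeneous $b\in R_\delta$, $\delta\in\Delta$, and homogeneous $r$, then extends additively. Notably, neither the strong grading nor object unitality is needed beyond having the enough-idempotents setup. I expect no real obstacle; the only point needing care is the closure argument $\delta\sigma\in\Delta\Rightarrow\sigma\in\Delta$, which uses that $\Delta$ is a subgroupoid, not merely a subset.
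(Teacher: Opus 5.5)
Your proof is correct and is essentially the paper's argument. The paper also takes the homogeneous components indexed outside $\Delta$ as the complement, and uses the same subgroupoid closure property $\Delta(e)\bigl(\Gamma(f,e)\setminus\Delta(f,e)\bigr)\Delta(f)\subseteq\Gamma(f,e)\setminus\Delta(f,e)$; the only difference is that it checks the corner-wise criterion of Theorem~\ref{thm:sepidempotent}(b) on each $1_eR1_f$, whereas you check the equivalent global criterion that $R_\Delta$ is an $R_\Delta$-bimodule direct summand of $R$.
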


\begin{proof}
Take $e,f \in \Delta_0$. 
Then
$1_e R_{\Delta} 1_e = R_{\Delta(e)}$, 
$1_f R_{\Delta} 1_f = R_{\Delta(f)}$,  
and $1_e R_{\Delta} 1_f = R_{\Delta(f,e)}$.
Also
$1_e R 1_f = R_{\Delta(f,e)} \oplus 
R_{\Gamma(f,e) \setminus \Delta(f,e)}$
as additive groups. Clearly, 
$R_{\Delta(f,e)}$ is an 
$R_{\Delta(e)}$-$R_{\Delta(f)}$-bimodule.
Since $\Gamma$ is a groupoid and 
$\Delta$ is a subgroupoid of $\Gamma$,
$\Delta(e) \big( \Gamma(f,e) 
\setminus \Delta(f,e) \big)
\Delta(f) \subseteq \Gamma(f,e) 
\setminus \Delta(f,e)$.
Thus,
$R_{\Delta(f,e)}$ is an
$R_{\Delta(e)}$-$R_{\Delta(f)}$-bimodule
direct summand of $1_eR1_f$.
By Theorem~\ref{thm:sepidempotent}(b),
${\rm Ind}_{R/R_\Delta}$ is separable.
\end{proof}

We now turn to the question of separability 
of 
${\rm Res}_{R/{R_\Delta}} :
{}_R {\rm UMod} \to 
{}_{R_\Delta} {\rm UMod}$. 

\begin{definition}
\label{def: connected components}
Take $e,f \in \Gamma_0$.
Define an equivalence relation $\sim$ on 
$\Gamma_0$ by saying that $e\sim f$ if 
$\Gamma(e,f)\neq\emptyset$. 
We let $[e]$ denote
the equivalence class containing $e$. 
We say that $\Gamma$ is 
\emph{connected} if there is only one
equivalence class defined by $\sim$.
\end{definition}

By the next result, it is enough 
to consider the case 
when $\Gamma$ is connected.

\begin{proposition}\label{prop:connected}
For $e \in \Gamma_0$, 
put $\Gamma_e :=
\{ \sigma \in \Gamma \mid 
d(\sigma),r(\sigma) \in [e] \}$ and
$\Delta_e := \Delta \cap \Gamma_e$.
For each $e \in \Gamma_0$, 
$\Gamma_e$ is connected, and
the following assertions are equivalent:
 \begin{enumerate}[\rm (a)]

\item ${\rm Res}_{R/{R_\Delta}} :
{}_R {\rm UMod} \to 
{}_{R_\Delta} {\rm UMod}$
is separable.

\item The ring extension $R/R_\Delta$ 
is separable.

\item For each $e \in \Gamma_0$, the ring 
extension $R_{\Gamma_e}/R_{\Delta_e}$
is separable.

\item For each $e \in \Gamma_0$, 
${\rm Res}_{R_{\Gamma_e}/R_{\Delta_e}} :
{}_{R_{\Gamma_e}} {\rm UMod} \to 
{}_{R_{\Delta_e}} {\rm UMod}$
is separable.

\end{enumerate}
\end{proposition}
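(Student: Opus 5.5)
The plan is to prove the connectedness claim directly and then obtain the equivalences by pairing the earlier results: (a)$\Leftrightarrow$(b) and (c)$\Leftrightarrow$(d) from Theorem~\ref{thm:sepidempotent}(a), and (b)$\Leftrightarrow$(c) from Proposition~\ref{prop:partition}.

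\emph{Connectedness.} Take $f,g \in [e]$. Then there are $\alpha \in \Gamma(e,f)$ and $\beta \in \Gamma(e,g)$, and $\beta\alpha^{-1} \in \Gamma(f,g)$. Its domain and range lie in $[e]$, so it belongs to $\Gamma_e$. Moreover $\Gamma_e$ is closed under composition and inverses, and contains the identities of $[e]$, so it is a connected (full) subgroupoid. Likewise $\Delta_e$ is a wide subgroupoid of $\Gamma_e$.

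\emph{(a)$\Leftrightarrow$(b) and (c)$\Leftrightarrow$(d).} The rings $R$ and $R_\Delta$ have enough idempotents with the common complete set $U=\{1_f\}_{f\in\Gamma_0}$, as noted at the start of this section. Hence Theorem~\ref{thm:sepidempotent}(a) gives (a)$\Leftrightarrow$(b). For (c)$\Leftrightarrow$(d), we first note that $R_{\Gamma_e}$ is object unital graded by $\Gamma_e$, because the grading restricts. Since $\Delta_e$ is wide in $\Gamma_e$, the rings $R_{\Gamma_e}$ and $R_{\Delta_e}$ share the complete set $\{1_f\}_{f\in[e]}$, and the same theorem applies.

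\emph{(b)$\Leftrightarrow$(c).} Let $I$ be the set of connected components of $\Gamma_0$ and set $U_i=\{1_f \mid f\in i\}$. This gives a partition of $U$. Object unitality gives $1_f R_\sigma 1_g = R_\sigma$ if $r(\sigma)=f$ and $d(\sigma)=g$, and $0$ otherwise. Therefore
\[
U_iRU_i=\bigoplus\{R_\sigma \mid d(\sigma),r(\sigma)\in i\}=R_{\Gamma_e} \quad\text{for } e\in i .
\]
Every $\sigma$ has $d(\sigma)$ and $r(\sigma)$ in the same component, so $R=\bigoplus_{i\in I}U_iRU_i$. The same computation gives $U_iR_\Delta U_i=R_{\Delta_e}$ and $R_\Delta=\bigoplus_i U_iR_\Delta U_i$. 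Proposition~\ref{prop:partition} then gives (b)$\Leftrightarrow$(c), noting that $\Gamma_e$ depends only on $[e]$.

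The only step needing care is checking the hypotheses of Proposition~\ref{prop:partition}, namely the identification $U_iRU_i=R_{\Gamma_e}$. This follows from the grading axiom that $R_\sigma R_\tau=0$ when $(\sigma,\tau)\notin\Gamma_2$, together with object unitality. Everything else is direct invocation of earlier results.
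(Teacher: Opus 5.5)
Your proposal is correct and follows the paper's proof essentially step by step. You partition $U=\{1_f\}_{f\in\Gamma_0}$ by connected components and identify $U_iRU_i=R_{\Gamma_e}$ and $U_iR_\Delta U_i=R_{\Delta_e}$. You then use Proposition~\ref{prop:partition} for (b)$\Leftrightarrow$(c) and Theorem~\ref{thm:sepidempotent}(a) for (a)$\Leftrightarrow$(b) and (c)$\Leftrightarrow$(d), as the paper does; your explicit checks of connectedness, of $\Delta_e$ being wide in $\Gamma_e$, and of $1_fR_\sigma 1_g$ via object unitality just fill in details the paper leaves implicit.
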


\begin{proof}
Put $U := \{ 1_e \mid e \in \Gamma_0 \}$.
Let $\{ e_i \}_{i \in I}$
be a set of representatives for the
different equivalence classes defined
by $\sim$. For $i \in I$, put
$U_i := \{ 1_f \mid f \in [e_i] \}$.
Then $(U_i)_{i \in I}$ is a partition
of $U$. Moreover, since there are no 
morphisms between different connected
components, we have
$R = \oplus_{i \in I} U_i R U_i$ and
$R_\Delta = \bigoplus_{i \in I} U_i R_\Delta U_i$.
For each $i \in I$, we have
$U_i R U_i = R_{\Gamma_{e_i}}$
and 
$U_i R_\Delta U_i = R_{\Delta_{e_i}}$.
Also, $\Gamma_{e_i}$ is connected. 
Thus, the equivalence of (b) and (c)
follows from Proposition \ref{prop:partition},
and the equivalence of (a) with (b), as well as
of (c) with (d), follows from
Theorem \ref{thm:sepidempotent}(a).
\end{proof}

For the rest of the section, we make
the following additional assumptions:
\begin{itemize}
\item $\Gamma$ is connected, and
\item $R$ is strongly graded by $\Gamma$.
\end{itemize}

It is not clear to the authors whether our results hold when $R$ is 
not strongly graded.
We now generalize some notation
and results from \cite{CLP}.

\begin{definition}\label{def:nuv}
Take $\sigma \in \Gamma$.
Since $R$ is strongly graded, we have
$1_{r(\sigma)} \in R_\sigma 
R_{\sigma^{-1}}$.
Hence there exist $n_\sigma \in \N$,
$u_\sigma^{(i)} \in R_\sigma$, and
$v_{\sigma^{-1}}^{(i)} \in R_{\sigma^{-1}}$,
for $i \in \{ 1,\ldots,n_\sigma \}$,
such that
$1_{r(\sigma)} =
\sum_{i=1}^{n_\sigma} u_\sigma^{(i)}
v_{\sigma^{-1}}^{(i)}$.
Unless otherwise stated, the elements
$u_\sigma^{(i)}$ and
$v_{\sigma^{-1}}^{(i)}$ are fixed.
We also assume that if $e \in \Gamma_0$, 
then $n_e = 1$ and
$u_e^{(1)} = v_e^{(1)} = 1_e$.
Define the additive map
$\gamma_\sigma : R \to R$ by
$\gamma_\sigma(x) =
\sum_{i=1}^{n_\sigma} u_\sigma^{(i)} x
v_{\sigma^{-1}}^{(i)}$, $x \in R$.
\end{definition}

The following two lemmas generalize \cite[Prop. 33 and Prop. 37]{CLP}.
Recall that if $X,Y \subseteq R$, then
$C_X(Y) := 
\{ x \in X \mid xy = yx \ \mbox{for all 
$y \in Y$} \}$.

\begin{lemma}
Take $\sigma \in \Gamma$ and
$x \in C_R(R_0)$. Then
$\gamma_\sigma(x)$ does not depend on the choice
of the elements $u_{\sigma}^{(i)}$ and
$v_{\sigma^{-1}}^{(i)}$.
\end{lemma}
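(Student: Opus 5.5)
Fix $\sigma \in \Gamma$ and $x \in C_R(R_0)$. Suppose $1_{r(\sigma)} = \sum_{i=1}^{n} u^{(i)} v^{(i)} = \sum_{j=1}^{m} a^{(j)} b^{(j)}$ with $u^{(i)}, a^{(j)} \in R_\sigma$ and $v^{(i)}, b^{(j)} \in R_{\sigma^{-1}}$. The plan is to show that
\[
\sum_i u^{(i)} x v^{(i)} = \sum_j a^{(j)} x b^{(j)}
\]
by inserting a second partition of unity and moving $x$ past elements of $R_0$. This is the standard trick from the group-graded case.

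First, I insert the first decomposition on the left of the $a^{(j)}$: since the grading is object unital and $a^{(j)} \in R_\sigma$, we have $a^{(j)} = 1_{r(\sigma)} a^{(j)} = \sum_i u^{(i)} v^{(i)} a^{(j)}$. Hence
\[
\sum_j a^{(j)} x b^{(j)} = \sum_{i,j} u^{(i)} \bigl(v^{(i)} a^{(j)}\bigr) x b^{(j)}.
\]
The factor $v^{(i)} a^{(j)}$ lies in $R_{\sigma^{-1}} R_\sigma \subseteq R_{\sigma^{-1}\sigma} = R_{d(\sigma)} \subseteq R_0$. Since $x$ commutes with $R_0$, this equals $\sum_{i,j} u^{(i)} x \, v^{(i)} a^{(j)} b^{(j)}$. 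Summing over $j$ gives $\sum_i u^{(i)} x v^{(i)} 1_{r(\sigma)}$. Finally, $v^{(i)} \in R_{\sigma^{-1}}$ has $d(\sigma^{-1}) = r(\sigma)$, so $v^{(i)} 1_{r(\sigma)} = v^{(i)}$ by object unitality. This yields the claim.

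The only points needing care are bookkeeping, not a real obstacle. We need $a^{(j)}$ to be homogeneous of degree $\sigma$, so that left multiplication by $1_{r(\sigma)}$ fixes it. We need the product $v^{(i)} a^{(j)}$ to land in the degree-$d(\sigma)$ component, so that it lies in $R_0$ and commutes with $x$. Strong grading is not even needed here beyond the existence of the decompositions themselves.
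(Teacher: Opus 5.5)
Your proof is correct and is essentially the paper's argument. The paper likewise inserts the second decomposition $1_{r(\sigma)}=\sum_j s_\sigma^{(j)} t_{\sigma^{-1}}^{(j)}$ on the left, commutes $x$ past the products $t_{\sigma^{-1}}^{(j)}u_\sigma^{(i)}\in R_{d(\sigma)}\subseteq R_0$, and collapses the remaining sum to $1_{r(\sigma)}$; the only difference is that the roles of the two decompositions are swapped relative to yours.
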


\begin{proof}
Put $e := d(\sigma)$ and $f := r(\sigma)$.
Suppose that $m_\sigma \in \N$,
$s_{\sigma}^{(j)} \in R_{\sigma}$, and
$t_{\sigma^{-1}}^{(j)} \in R_{\sigma^{-1}}$,
for $j \in \{1,\ldots,m_{\sigma} \}$, are
chosen so that
$\sum_{j=1}^{m_{\sigma}} s_{\sigma}^{(j)}
t_{\sigma^{-1}}^{(j)} = 1_f$.
Then
\[
\sum_{i=1}^{n_{\sigma}} u_{\sigma}^{(i)} x
v_{\sigma^{-1}}^{(i)}
=
\sum_{i=1}^{n_{\sigma}} \sum_{j=1}^{m_{\sigma}}
s_{\sigma}^{(j)} t_{\sigma^{-1}}^{(j)}
u_{\sigma}^{(i)} x v_{\sigma^{-1}}^{(i)}.
\]
Since $t_{\sigma^{-1}}^{(j)} u_{\sigma}^{(i)} \in
R_e \subseteq R_0$
and $x \in C_R(R_0)$, the right-hand side equals
\[
\sum_{i=1}^{n_{\sigma}} \sum_{j=1}^{m_{\sigma}}
s_{\sigma}^{(j)} x
t_{\sigma^{-1}}^{(j)} u_{\sigma}^{(i)}
v_{\sigma^{-1}}^{(i)}
=
\sum_{j=1}^{m_{\sigma}} s_{\sigma}^{(j)} x
t_{\sigma^{-1}}^{(j)}
\left( \sum_{i=1}^{n_\sigma} u_\sigma^{(i)}v_{\sigma^{-1}}^{(i)}\right)
=
\sum_{j=1}^{m_{\sigma}} s_{\sigma}^{(j)} x
t_{\sigma^{-1}}^{(j)}. \qedhere
\]
\end{proof}

\begin{lemma}\label{lem: a gamma_tau(x)}
Suppose that $(\sigma,\tau) \in \Gamma_2$ 
and $a \in R_\sigma$.
Take $x \in C_R( R_0 )$. Then
$a \gamma_\tau(x) = 
\gamma_{\sigma\tau}(x) a$.
Furthermore, if $x \in 
C_{R_{\Gamma(d(\sigma))}}(R_0)$,
then $ax = \gamma_\sigma(x) a$.
\end{lemma}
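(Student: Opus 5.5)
The plan is a direct computation that inserts the local unit $1_{r(\sigma)}$ on the left and uses centrality of $x$ with respect to $R_0$.

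\emph{First claim.} Put $e := d(\sigma) = r(\tau)$ and $f := r(\sigma)$. Object unitality gives $a = 1_f a$, and by Definition~\ref{def:nuv} we have $1_f = 1_{r(\sigma\tau)} = \sum_{j} u_{\sigma\tau}^{(j)} v_{(\sigma\tau)^{-1}}^{(j)}$. I would first write
\[
a\gamma_\tau(x) = \sum_{j}\sum_{i} u_{\sigma\tau}^{(j)} v_{(\sigma\tau)^{-1}}^{(j)} a\, u_\tau^{(i)} x\, v_{\tau^{-1}}^{(i)}.
\]
The factor $v_{(\sigma\tau)^{-1}}^{(j)} a u_\tau^{(i)}$ has degree $(\sigma\tau)^{-1}\sigma\tau = d(\tau)$, so it lies in $R_0$. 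Since $x \in C_R(R_0)$, this factor commutes past $x$, and the sum becomes
\[
\sum_j u_{\sigma\tau}^{(j)} x\, v_{(\sigma\tau)^{-1}}^{(j)} a \Big(\sum_i u_\tau^{(i)} v_{\tau^{-1}}^{(i)}\Big).
\]
The bracket equals $1_{r(\tau)} = 1_e$, and $a 1_e = a$ by object unitality, so the whole expression is $\gamma_{\sigma\tau}(x) a$.

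\emph{Second claim.} Now take $x \in C_{R_{\Gamma(e)}}(R_0)$ with $e = d(\sigma)$. Then $x = 1_e x$, since $1_e x$ agrees with $x$ componentwise by object unitality. Moreover $x \in C_R(R_0)$, because $C_{R_{\Gamma(e)}}(R_0) \subseteq C_R(R_0)$. Applying the first claim with $\tau = e$ and using $\gamma_e(x) = 1_e x 1_e = x$ (recall $n_e = 1$ and $u_e^{(1)} = v_e^{(1)} = 1_e$) gives
\[
a x = a\gamma_e(x) = \gamma_{\sigma}(x) a.
\]

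\emph{Where care is needed.} The only delicate point is the degree bookkeeping: $a\,u_\tau^{(i)}$ sits in $R_{\sigma\tau}$, and left multiplication by $v_{(\sigma\tau)^{-1}}^{(j)}$ lands in $R_{d(\tau)} \subseteq R_0$. A secondary point is that $x$ need not be homogeneous. This causes no trouble, because the argument uses only centrality of $x$ with respect to $R_0$ and never its degree.
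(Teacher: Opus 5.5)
Your proposal is correct and follows the paper's proof essentially verbatim. You insert $1_{r(\sigma)}=\sum_j u_{\sigma\tau}^{(j)}v_{(\sigma\tau)^{-1}}^{(j)}$, commute the degree-$d(\tau)$ factor $v_{(\sigma\tau)^{-1}}^{(j)}au_\tau^{(i)}\in R_0$ past $x$, and collapse $\sum_i u_\tau^{(i)}v_{\tau^{-1}}^{(i)}=1_{r(\tau)}$; you then get the second claim by taking $\tau=e$ and using $\gamma_e(x)=x$ for $x\in R_{\Gamma(e)}$, exactly as the paper does.
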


\begin{proof}
Put $f := r(\sigma) = r(\sigma\tau)$. Take
$x \in C_R( R_0 )$. Then
\[
a \gamma_\tau (x)=
\sum_{i=1}^{n_\tau} a u_\tau^{(i)}  x v_{\tau^{-1}}^{(i)}
= \sum_{i=1}^{n_\tau} 1_f a u_\tau^{(i)}
x v_{\tau^{-1}}^{(i)}
=\sum_{i=1}^{n_\tau} \sum_{j=1}^{n_{\sigma\tau}}
u_{\sigma\tau}^{(j)} v_{(\sigma\tau)^{-1}}^{(j)} 
a u_\tau^{(i)} x v_{\tau^{-1}}^{(i)}.
\]
Since $v_{(\sigma\tau)^{-1}}^{(j)}au_\tau^{(i)}
\in R_{d(\tau)} \subseteq R_0$, the last sum equals
\[
\sum_{i=1}^{n_\tau}\sum_{j=1}^{n_{\sigma\tau}}
u_{\sigma\tau}^{(j)} x v_{(\sigma\tau)^{-1}}^{(j)} 
a u_\tau^{(i)}v_{\tau^{-1}}^{(i)} =
\sum_{j=1}^{n_{\sigma\tau}}u_{\sigma\tau}^{(j)}
x v_{(\sigma\tau)^{-1}}^{(j)} a 1_{r(\tau)}
= \gamma_{\sigma\tau}(x)a.
\]
Now put $e := d(\sigma)$ and take
$x \in C_{R_{\Gamma(e)}}(R_0) \subseteq C_R(R_0)$.
By Definition \ref{def:nuv}, we have $\gamma_e(x)=x$.
By the first part with $\tau=e$, 
$ax = a\gamma_e(x) =
\gamma_{\sigma e}(x)a =
\gamma_\sigma(x)a$.
\end{proof}

\begin{lemma}\label{lem:t=gamma_sigma(s),s=gamma_sigma-1(t)}
Take $\sigma \in \Gamma$,
$s \in R_{\Gamma(d(\sigma))}$, and
$t \in R_{\Gamma(r(\sigma))}$.
Suppose that $as = ta$
for each $a \in R_\sigma$.
Then $t = \gamma_\sigma(s)$ and 
$s = \gamma_{\sigma^{-1}}(t)$.
\end{lemma}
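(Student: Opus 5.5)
The plan is a direct computation: write $1_{r(\sigma)}$ and $1_{d(\sigma)}$ through the fixed elements $u_\sigma^{(i)}, v_{\sigma^{-1}}^{(i)}$ and $u_{\sigma^{-1}}^{(j)}, v_{\sigma}^{(j)}$, and move $s$ and $t$ across these elements using the hypothesis $as=ta$. Put $e := d(\sigma)$ and $f := r(\sigma)$. Since $t \in R_{\Gamma(f)}$, object unitality gives $t = 1_f t$. Definition~\ref{def:nuv} gives $1_f = \sum_{i} u_\sigma^{(i)} v_{\sigma^{-1}}^{(i)}$, so
\[
t = t 1_f = \sum_{i=1}^{n_\sigma} t u_\sigma^{(i)} v_{\sigma^{-1}}^{(i)} .
\]
Applying the hypothesis with $a = u_\sigma^{(i)} \in R_\sigma$ turns $t u_\sigma^{(i)}$ into $u_\sigma^{(i)} s$. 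Hence $t = \sum_i u_\sigma^{(i)} s v_{\sigma^{-1}}^{(i)} = \gamma_\sigma(s)$.

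For the second identity, write $1_e = \sum_{j} u_{\sigma^{-1}}^{(j)} v_{\sigma}^{(j)}$ with $u_{\sigma^{-1}}^{(j)} \in R_{\sigma^{-1}}$ and $v_\sigma^{(j)} \in R_\sigma$. Then
\[
s = 1_e s = \sum_{j=1}^{n_{\sigma^{-1}}} u_{\sigma^{-1}}^{(j)} v_\sigma^{(j)} s .
\]
Apply the hypothesis with $a = v_\sigma^{(j)} \in R_\sigma$ to replace $v_\sigma^{(j)} s$ by $t v_\sigma^{(j)}$. This gives $s = \sum_j u_{\sigma^{-1}}^{(j)} t v_\sigma^{(j)} = \gamma_{\sigma^{-1}}(t)$, because $v_{(\sigma^{-1})^{-1}}^{(j)} = v_\sigma^{(j)}$.

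The only step needing care is the justification of $1_e s = s$ and $t 1_f = t$. Both follow from object unitality applied to each homogeneous component: $s$ is a sum of elements of $R_\rho$ with $\rho \in \Gamma(e)$, and $t$ is a sum of elements of $R_\rho$ with $\rho \in \Gamma(f)$. The argument uses no centralizer hypothesis beyond $as=ta$. It also does not depend on the choice of $u,v$, since the identities hold for every choice.
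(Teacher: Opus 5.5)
Your proof is correct and is the same computation the paper gives: expand $t = t1_{r(\sigma)}$ and $s = 1_{d(\sigma)}s$ using the fixed elements, then move $t$ (resp.\ $s$) across via $as=ta$ with $a = u_\sigma^{(i)}$ (resp.\ $a = v_\sigma^{(j)}$). One small slip: you first write $t = 1_f t$ but then use $t = t1_f$. This is harmless, since both identities hold by object unitality because every $\rho \in \Gamma(f)$ has $d(\rho)=r(\rho)=f$.
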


\begin{proof}
This follows from
$t=t1_{r(\sigma)}=\sum_{i=1}^{n_\sigma}tu_\sigma^{(i)} v_{\sigma^{-1}}^{(i)}=\sum_{i=1}^{n_\sigma}u_\sigma^{(i)} sv_{\sigma^{-1}}^{(i)}=\gamma_\sigma(s)$
and $s=1_{d(\sigma)}s=\sum_{i=1}^{n_{\sigma^{-1}}}u_{\sigma^{-1}}^{(i)} v_\sigma^{(i)} s=\sum_{i=1}^{n_{\sigma^{-1}}}u_{\sigma^{-1}}^{(i)}t v_\sigma^{(i)}=\gamma_{\sigma^{-1}}(t)$.
\end{proof}

\begin{lemma}
\label{lem: gamma_sigma gamma_tau = gamma_sigmatau}
Suppose that $\sigma,\tau \in \Gamma$ and 
$a\in C_R( R_0 )$.
Then $\gamma_\sigma (\gamma_\tau (a))=
\gamma_{\sigma\tau}(a)$, if $(\sigma,\tau) \in \Gamma_2$,
and $\gamma_\sigma (\gamma_\tau (a)) = 0$,
otherwise.
\end{lemma}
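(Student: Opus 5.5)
The plan is to split into the two cases and, in the main case, first show that $\gamma_\tau(a)$ again lies in $C_R(R_0)$, so that $\gamma_\sigma$ can be applied freely.

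\emph{Case $(\sigma,\tau) \notin \Gamma_2$.} Here $d(\sigma) \neq r(\tau)$. Each term of $\gamma_\sigma(\gamma_\tau(a))$ has the form $u_\sigma^{(i)} u_\tau^{(j)} a v_{\tau^{-1}}^{(j)} v_{\sigma^{-1}}^{(i)}$. Since $u_\sigma^{(i)} \in R_\sigma$, $u_\tau^{(j)} \in R_\tau$ and $(\sigma,\tau) \notin \Gamma_2$, the grading axiom gives $u_\sigma^{(i)} u_\tau^{(j)} = 0$. Hence the whole sum vanishes.

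\emph{Case $(\sigma,\tau) \in \Gamma_2$, first step.} We show $\gamma_\tau(a) \in C_R(R_0)$. Take $b \in R_0$; by additivity we may assume $b \in R_g$ for some $g \in \Gamma_0$. By Lemma~\ref{lem: a gamma_tau(x)} with the pair $(g,\tau)$ we get $b\gamma_\tau(a) = \gamma_{g\tau}(a) b$ when $g = r(\tau)$, and $b\gamma_\tau(a) = 0$ otherwise, by the grading. On the other side, $\gamma_\tau(a) b$ is a sum of terms ending in $v_{\tau^{-1}}^{(j)} b$. This product is nonzero only if $(\tau^{-1},g) \in \Gamma_2$, i.e.\ $g = r(\tau)$, and in that case $v_{\tau^{-1}}^{(j)} b = v_{\tau^{-1}}^{(j)} 1_{r(\tau)} b$. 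Thus for $g = r(\tau)$ we need $\gamma_\tau(a) b = b \gamma_\tau(a)$ with $b \in R_{r(\tau)}$. This is exactly the identity $b\gamma_\tau(a) = \gamma_{r(\tau)\tau}(a) b = \gamma_\tau(a) b$ from Lemma~\ref{lem: a gamma_tau(x)}. Both sides vanish for $g \neq r(\tau)$, so $\gamma_\tau(a) \in C_R(R_0)$.

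\emph{Case $(\sigma,\tau) \in \Gamma_2$, main computation.} Write
\[
\gamma_\sigma(\gamma_\tau(a)) = \sum_{i=1}^{n_\sigma} u_\sigma^{(i)} \gamma_\tau(a) v_{\sigma^{-1}}^{(i)}.
\]
Apply Lemma~\ref{lem: a gamma_tau(x)} with the element $u_\sigma^{(i)} \in R_\sigma$ to get $u_\sigma^{(i)} \gamma_\tau(a) = \gamma_{\sigma\tau}(a) u_\sigma^{(i)}$. Summing over $i$ then gives
\[
\gamma_\sigma(\gamma_\tau(a)) = \gamma_{\sigma\tau}(a) \sum_{i=1}^{n_\sigma} u_\sigma^{(i)} v_{\sigma^{-1}}^{(i)} = \gamma_{\sigma\tau}(a) 1_{r(\sigma)}.
\]
Finally, $\gamma_{\sigma\tau}(a) 1_{r(\sigma)} = \gamma_{\sigma\tau}(a)$. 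Indeed, each term of $\gamma_{\sigma\tau}(a)$ ends in $v_{(\sigma\tau)^{-1}}^{(j)} \in R_{(\sigma\tau)^{-1}}$, whose domain is $r(\sigma\tau) = r(\sigma)$, so object unitality gives $v_{(\sigma\tau)^{-1}}^{(j)} 1_{r(\sigma)} = v_{(\sigma\tau)^{-1}}^{(j)}$.

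The only delicate point is the bookkeeping of domains and ranges together with object unitality; the rest is a direct application of Lemma~\ref{lem: a gamma_tau(x)}. The main computation does not actually use the first step, since Lemma~\ref{lem: a gamma_tau(x)} is applied to $a$ itself and not to $\gamma_\tau(a)$, so that step could be dropped.
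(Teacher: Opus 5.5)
Your proof is correct, but it takes a different route from the paper's. The non-composable case is handled identically in both.

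For the composable case, the paper argues directly. It expands
\[
\gamma_\sigma(\gamma_\tau(a)) = \sum_{i,j} u_\sigma^{(j)} u_\tau^{(i)} a v_{\tau^{-1}}^{(i)} v_{\sigma^{-1}}^{(j)},
\]
inserts $1_{r(\sigma\tau)} = \sum_k u_{\sigma\tau}^{(k)} v_{(\sigma\tau)^{-1}}^{(k)}$ on the left, and moves $a$ past the degree-zero element $v_{(\sigma\tau)^{-1}}^{(k)} u_\sigma^{(j)} u_\tau^{(i)}$. It then collapses the sums over $i$ and $j$.

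You instead apply Lemma~\ref{lem: a gamma_tau(x)} to each $u_\sigma^{(i)} \in R_\sigma$. That lemma already packages the insert-and-commute step, so the computation becomes one line:
\[
\gamma_\sigma(\gamma_\tau(a)) = \gamma_{\sigma\tau}(a)\sum_i u_\sigma^{(i)} v_{\sigma^{-1}}^{(i)} = \gamma_{\sigma\tau}(a)1_{r(\sigma)} = \gamma_{\sigma\tau}(a).
\]
Your use of object unitality on $v_{(\sigma\tau)^{-1}}^{(k)}$ in the last step is right. There is no circularity, since Lemma~\ref{lem: a gamma_tau(x)} is proved earlier and independently.

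Your version shows that the composition rule follows formally from the intertwining relation $a\gamma_\tau(x) = \gamma_{\sigma\tau}(x)a$ together with $\sum_i u_\sigma^{(i)} v_{\sigma^{-1}}^{(i)} = 1_{r(\sigma)}$. The paper's version is self-contained and does not depend on the order of the lemmas; it is carried over from the earlier paper.

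As you note yourself, your ``first step'' is unnecessary. It is correct, but $\gamma_\sigma$ is defined on all of $R$, so you never need $\gamma_\tau(a) \in C_R(R_0)$ before applying it. That step can be deleted.
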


\begin{proof}
The proof of \cite[Prop. 34]{CLP} works in 
our more general situation.
For the convenience of the reader, 
we repeat it here.
Take $\sigma,\tau \in \Gamma$ 
and $a \in C_R(R_0)$. Then
\[
\gamma_\sigma ( \gamma_\tau ( a ) ) 
= \sum_{i=1}^{n_\tau} \gamma_\sigma ( u_\tau^{(i)} a v_{\tau^{-1}}^{(i)} ) 
= \sum_{i=1}^{n_\tau} \sum_{j=1}^{n_\sigma}
u_\sigma^{(j)} u_\tau^{(i)} a v_{\tau^{-1}}^{(i)} v_{\sigma^{-1}}^{(j)}.
\]
If $(\sigma,\tau) \notin \Gamma_2$, then
$u_\sigma^{(j)} u_\tau^{(i)} = 0$ so that
$\gamma_\sigma ( \gamma_\tau (a) ) = 0$.
Now suppose $(\sigma,\tau) \in \Gamma_2$.
Since $u_\sigma^{(j)} u_\tau^{(i)} \in 
R_{\sigma \tau}$, the last sum equals
\begin{displaymath}
\sum_{i=1}^{n_\tau} \sum_{j=1}^{n_\sigma} 
1_{r(\sigma\tau)} u_\sigma^{(j)} u_\tau^{(i)} a v_{\tau^{-1}}^{(i)} v_{\sigma^{-1}}^{(j)} =
\sum_{i=1}^{n_\tau} \sum_{j=1}^{n_\sigma} \sum_{k=1}^{n_{\sigma \tau}}  
u_{\sigma \tau}^{(k)} v_{\tau^{-1} \sigma^{-1}}^{(k)}
u_\sigma^{(j)} u_\tau^{(i)} a v_{\tau^{-1}}^{(i)} v_{\sigma^{-1}}^{(j)}.
\end{displaymath}
Since $a \in C_R(R_0)$ and 
$v_{\tau^{-1} \sigma^{-1}}^{(k)} u_\sigma^{(j)} u_\tau^{(i)} \in R_0$, the last sum equals
\begin{displaymath}
\sum_{i=1}^{n_\tau} \sum_{j=1}^{n_\sigma} \sum_{k=1}^{n_{\sigma\tau}}  
u_{\sigma\tau}^{(k)} a v_{\tau^{-1} \sigma^{-1}}^{(k)}
u_\sigma^{(j)} u_\tau^{(i)} v_{\tau^{-1}}^{(i)} v_{\sigma^{-1}}^{(j)} = 
\sum_{j=1}^{n_\sigma} \sum_{k=1}^{n_{\sigma\tau}}  
u_{\sigma\tau}^{(k)} a v_{\tau^{-1} \sigma^{-1}}^{(k)}
u_\sigma^{(j)} 1_{r(\tau)} v_{\sigma^{-1}}^{(j)}.
\end{displaymath}
Since $d(\sigma)=r(\tau)$, the last sum equals
\begin{displaymath}
\sum_{j=1}^{n_\sigma} \sum_{k=1}^{n_{\sigma\tau}}  
u_{\sigma\tau}^{(k)} a v_{\tau^{-1} \sigma^{-1}}^{(k)}
u_\sigma^{(j)} v_{\sigma^{-1}}^{(j)} = 
\sum_{k=1}^{n_{\sigma\tau}}  
u_{\sigma\tau}^{(k)} a 
v_{(\sigma\tau)^{-1}}^{(k)} 1_{r(\sigma)} 
= \gamma_{\sigma\tau}(a). \qedhere
\end{displaymath}
\end{proof}

\begin{definition}
Take $\sigma,\tau \in \Gamma$.
We say that 
$\sigma$ and $\tau$ are \emph{right $\Lambda$-equivalent} 
if $\sigma \Lambda = \tau \Lambda$.
Since we assume that $\Delta$ is a 
wide subgroupoid of $\Gamma$,
the property of being right 
$\Lambda$-equivalent is 
an equivalence relation on $\Gamma$.
We let $T$ denote a set of
representatives for the
equivalence classes of this relation.
We always assume that we have 
chosen $T$ so that 
$\Gamma_0 \subseteq T$. Take 
$e,f \in \Gamma_0$. We put $T_{e,f} := \{ 
\sigma \in T \mid
\mbox{$d(\sigma)=e$ and $r(\sigma)=f$} \}$
and $T_e := T_{e,e}$.
\end{definition}

\begin{lemma}\label{lem: gamma_sigma=gamma_tau}
Take $x \in C_R(R_\Lambda)$ and $\sigma,\tau \in \Gamma$. 
Suppose that $\sigma$ and $\tau$ are right 
$\Lambda$-equivalent.
Then $\gamma_\sigma(x) = \gamma_\tau(x)$.
\end{lemma}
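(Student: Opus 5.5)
Since $\sigma\Lambda = \tau\Lambda$ and $\sigma \in \sigma\Lambda$, we can write $\sigma = \tau\lambda$ for some $\lambda \in \Lambda$ with $(\tau,\lambda) \in \Gamma_2$. Then $\lambda \in \Delta(e)$ with $e := d(\tau) = d(\sigma)$, and $\sigma$ and $\tau$ share the range $f := r(\tau)$. The plan is to write $\gamma_\sigma(x)$ with the representatives for $\sigma$, and insert the representatives for $\tau$ in front. Because $x$ commutes only with $R_\Lambda$, not with all of $R_0$, we cannot invoke Lemma~\ref{lem: gamma_sigma gamma_tau = gamma_sigmatau}. The argument therefore has to check directly that the product we need to move past $x$ actually lies in $R_\Lambda$.

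Concretely, we begin with
\[
\gamma_\sigma(x) = \sum_{i} 1_f\, u_\sigma^{(i)} x v_{\sigma^{-1}}^{(i)} = \sum_{i}\sum_{j} u_\tau^{(j)} v_{\tau^{-1}}^{(j)} u_\sigma^{(i)} x v_{\sigma^{-1}}^{(i)} .
\]
Since $\tau^{-1}\sigma = \lambda$, the element $v_{\tau^{-1}}^{(j)} u_\sigma^{(i)}$ lies in $R_{\lambda} \subseteq R_\Lambda$, so it commutes with $x$. The expression becomes
\[
\sum_{i,j} u_\tau^{(j)} x\, v_{\tau^{-1}}^{(j)} u_\sigma^{(i)} v_{\sigma^{-1}}^{(i)} = \sum_j u_\tau^{(j)} x\, v_{\tau^{-1}}^{(j)} 1_f .
\]
Finally, object unitality gives $v_{\tau^{-1}}^{(j)} 1_f = v_{\tau^{-1}}^{(j)}$, because $d(\tau^{-1}) = f$, and the last sum is exactly $\gamma_\tau(x)$.

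The only points needing care are bookkeeping ones. The first is that $\lambda$ really is an isotropy morphism of $\Delta$, hence in $\Lambda$, so that the grading places $v_{\tau^{-1}}^{(j)}u_\sigma^{(i)}$ in $R_\Lambda$. The second is that $1_f$ is the right unit to insert on the left, since $r(\sigma) = r(\tau) = f$. There is no real obstacle: the main step is simply noticing that the commutation with $x$ is needed only for elements of degree $\tau^{-1}\sigma \in \Lambda$. That is why the weaker hypothesis $x \in C_R(R_\Lambda)$ suffices.
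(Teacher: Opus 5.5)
Your argument is correct and is essentially the paper's proof: you insert $1_f=\sum_j u_\tau^{(j)}v_{\tau^{-1}}^{(j)}$ on the left, commute $x$ past $v_{\tau^{-1}}^{(j)}u_\sigma^{(i)}\in R_{\tau^{-1}\sigma}\subseteq R_\Lambda$, and then collapse $\sum_i u_\sigma^{(i)}v_{\sigma^{-1}}^{(i)}=1_f$. One correction to your commentary: because $\Delta$ is wide, $\Gamma_0\subseteq\Lambda$, so $R_0\subseteq R_\Lambda$ and $C_R(R_\Lambda)\subseteq C_R(R_0)$; your hypothesis is therefore stronger, not weaker, and Lemma~\ref{lem: gamma_sigma gamma_tau = gamma_sigmatau} could in fact be used, via $\gamma_\sigma(x)=\gamma_\tau(\gamma_\lambda(x))$ and $\gamma_\lambda(x)=x1_e$, though this slip does not affect your direct argument.
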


\begin{proof}
Put $f := r(\sigma)=r(\tau)$. We have
\[ \gamma_\sigma(x)=
\sum_{i=1}^{n_\sigma} u_\sigma^{(i)} x 
v_{\sigma^{-1}}^{(i)} 
= \sum_{i=1}^{n_\sigma} 1_f u_\sigma^{(i)} x 
v_{\sigma^{-1}}^{(i)} =\sum_{i=1}^{n_\sigma} 
\sum_{j=1}^{n_\tau} u_\tau^{(j)}
v_{\tau^{-1}}^{(j)} u_\sigma^{(i)} x 
v_{\sigma^{-1}}^{(i)}.
\]
Since $\sigma$ and $\tau$ are right $\Lambda$-equivalent,
it follows that $v_{\tau^{-1}}^{(j)} u_\sigma^{(i)} \in 
R_{\tau^{-1}\sigma} \subseteq R_\Lambda$. 
Since $x \in C_R(R_\Lambda)$, the last
sum therefore equals
\[ \sum_{i=1}^{n_\sigma}\sum_{j=1}^{n_\tau}
u_\tau^{(j)} x v_{\tau^{-1}}^{(j)} u_\sigma^{(i)} 
v_{\sigma^{-1}}^{(i)} = \sum_{j=1}^{n_\tau}u_\tau^{(j)} x 
v_{\tau^{-1}}^{(j)} 1_f = \gamma_\tau(x).
\qedhere\]
\end{proof}

\begin{definition}\label{def:sigmaaction}
Suppose that $e,e',f \in \Gamma_0$, $\tau \in T_{f,e'}$
and $\sigma \in \Gamma(e',e)$. Let $\tau^\sigma$ 
denote the 
unique element in $T_{f,e}$ that is right
$\Lambda$-equivalent to $\sigma \tau$.
\end{definition}

\begin{lemma}
\label{lem:|T_f,e'|=|T_f,e|}
Suppose that $e,e',e'',f \in \Gamma_0$,
$\sigma \in \Gamma(e',e)$,
and $\sigma' \in \Gamma(e'',e')$.
Then for every $\tau \in T_{f,e''}$,
$(\tau^{\sigma'})^\sigma = \tau^{\sigma \sigma'}$.
In particular, the map
$T_{f,e'} \ni \tau \mapsto \tau^\sigma \in T_{f,e}$
is bijective, with inverse
$T_{f,e} \ni \tau \mapsto \tau^{\sigma^{-1}} \in T_{f,e'}$,
and hence
$|T_{f,e'}| = |T_{f,e}|$.
\end{lemma}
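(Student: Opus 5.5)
The plan is to argue directly from Definition~\ref{def:sigmaaction} and the fact that right $\Lambda$-equivalence is compatible with left composition. First I will check that $\tau^\sigma$ is well defined. Given $\tau \in T_{f,e'}$ and $\sigma \in \Gamma(e',e)$, the composite $\sigma\tau$ exists and $\sigma\tau : f \to e$. Any $\rho$ right $\Lambda$-equivalent to $\sigma\tau$ has the form $\rho = \sigma\tau\lambda$ with $\lambda \in \Lambda$. Since $\Lambda$ consists only of isotropy morphisms, $\lambda \in \Delta(f)$, so $d(\rho)=f$ and $r(\rho)=e$. Hence the unique representative in $T$ of the class of $\sigma\tau$ lies in $T_{f,e}$.

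The key observation is that if $\rho \Lambda = \rho' \Lambda$ and $(\alpha,\rho) \in \Gamma_2$, then $\alpha\rho\Lambda = \alpha\rho'\Lambda$. Indeed, $\rho' = \rho\lambda$ for some $\lambda \in \Lambda$, hence $\alpha\rho' = (\alpha\rho)\lambda$; the reverse inclusion follows by symmetry.

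For the composition rule, take $\tau \in T_{f,e''}$. By definition $\tau^{\sigma'}\Lambda = \sigma'\tau\Lambda$. Applying the observation with $\alpha = \sigma$ gives $\sigma\tau^{\sigma'}\Lambda = \sigma\sigma'\tau\Lambda$. Now $(\tau^{\sigma'})^\sigma$ is the representative in $T_{f,e}$ of the class of $\sigma\tau^{\sigma'}$, and $\tau^{\sigma\sigma'}$ is the representative of the class of $\sigma\sigma'\tau$. These classes coincide, and $T$ contains exactly one element per class, so $(\tau^{\sigma'})^\sigma = \tau^{\sigma\sigma'}$.

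Next I will check that $\tau^{e} = \tau$ for $\tau \in T_{f,e}$, where $e$ is viewed as an identity morphism: $e\tau = \tau$ and $\tau \in T$, so $\tau$ is its own representative. Then, for $\tau \in T_{f,e'}$, the composition rule gives $(\tau^\sigma)^{\sigma^{-1}} = \tau^{\sigma^{-1}\sigma} = \tau^{e'} = \tau$. Symmetrically, for $\tau \in T_{f,e}$ we get $(\tau^{\sigma^{-1}})^\sigma = \tau^{e} = \tau$. So the two maps are mutually inverse bijections, and $|T_{f,e'}| = |T_{f,e}|$ as cardinals.

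There is no real obstacle here. The only point needing care is the bookkeeping of domains and ranges, namely that right multiplication by $\Lambda$ preserves both $d$ and $r$. This is exactly why the relation uses $\Lambda = {\rm Iso}(\Delta)$ rather than $\Delta$.
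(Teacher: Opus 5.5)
Your proof is correct and follows the same route as the paper: both get $(\tau^{\sigma'})^\sigma=\tau^{\sigma\sigma'}$ from the equality of classes $\sigma\tau^{\sigma'}\Lambda=\sigma\sigma'\tau\Lambda$ together with uniqueness of representatives in $T$. You also spell out what the paper leaves implicit: that $\tau^\sigma$ is well defined, that right $\Lambda$-equivalence is compatible with left composition, and that $\tau^{e}=\tau$, which is what makes the bijectivity claim "follow immediately".
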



\begin{proof}
Take $\tau \in T_{f,e''}$. Then 
$\tau^{\sigma \sigma'} \Lambda = \sigma \sigma' \tau \Lambda
= \sigma \tau^{\sigma'} \Lambda = 
(\tau^{\sigma'})^\sigma \Lambda$.
Therefore $\tau^{\sigma \sigma'} = (\tau^{\sigma'})^\sigma$
which proves the first part. 
The second part follows
immediately.
\end{proof}

By 
Lemma~\ref{lem:|T_f,e'|=|T_f,e|}, 
for all $e\in\Gamma_0$, $f \in \Gamma_0^{\rm fin} :=  
\{ f \in \Gamma_0 \mid
[\Gamma(f):\Delta(f)] < \infty \}$
we have 
$|T_{f,e}|=|T_f|=[\Gamma(f):\Delta(f)]
< \infty$. We can therefore make 
the following definition.

\begin{definition}
Take $e \in \Gamma_0$. 
We define 
${\rm tr}_{\Gamma/\Delta}^e : R \to R$, by
\[
{\rm tr}_{\Gamma/\Delta}^e(r) = 
\sum_{f \in \Gamma_0^{\rm fin}} 
\sum_{\tau \in T_{f,e}} \gamma_\tau(r),
\quad
r \in R.
\]
\end{definition}

Note that, by Lemma~\ref{lem: gamma_sigma=gamma_tau},
the restriction of
${\rm tr}_{\Gamma/\Delta}^e$
to $C_R(R_\Lambda)$ is independent of the choice
of the sets $T_{f,e}$.

\begin{lemma}
\label{lem: trace and connected components}
$\gamma_\sigma({\rm tr}_{\Gamma/\Delta}^e(r))={\rm tr}_{\Gamma/\Delta}^{e'}(r)$ for
$e,e'\in\Gamma_0$, $\sigma\in\Gamma(e,e')$, and $r\in C_R(R_\Lambda)$.
\end{lemma}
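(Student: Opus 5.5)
The plan is to compute $\gamma_\sigma({\rm tr}_{\Gamma/\Delta}^e(r))$ term by term, compose the $\gamma$'s using Lemma~\ref{lem: gamma_sigma gamma_tau = gamma_sigmatau}, and then reindex the sum via the bijection of Lemma~\ref{lem:|T_f,e'|=|T_f,e|}.

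\emph{Finiteness.} The sum defining ${\rm tr}_{\Gamma/\Delta}^e(r)$ is in fact finite. For $\tau \in T_{f,e}$ we have $\gamma_\tau(r)=\sum_i u_\tau^{(i)} 1_f r 1_f v_{\tau^{-1}}^{(i)}$. Since $r$ has only finitely many nonzero homogeneous components, $1_f r 1_f = 0$ for all but finitely many $f$. Each $T_{f,e}$ with $f \in \Gamma_0^{\rm fin}$ is finite. Since $\gamma_\sigma$ is additive, we may therefore write
\[
\gamma_\sigma({\rm tr}_{\Gamma/\Delta}^e(r)) = \sum_{f \in \Gamma_0^{\rm fin}} \sum_{\tau \in T_{f,e}} \gamma_\sigma(\gamma_\tau(r)).
\]

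\emph{Composing.} Because $\Delta$ is wide, $\Gamma_0 \subseteq \Lambda$, so $R_0 \subseteq R_\Lambda$ and hence $C_R(R_\Lambda) \subseteq C_R(R_0)$. For $\tau \in T_{f,e}$ we have $r(\tau)=e=d(\sigma)$, so $(\sigma,\tau)\in\Gamma_2$. Lemma~\ref{lem: gamma_sigma gamma_tau = gamma_sigmatau} then gives $\gamma_\sigma(\gamma_\tau(r)) = \gamma_{\sigma\tau}(r)$.

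\emph{Replacing by representatives.} By Definition~\ref{def:sigmaaction}, $\sigma\tau$ is right $\Lambda$-equivalent to $\tau^\sigma \in T_{f,e'}$. Since $r \in C_R(R_\Lambda)$, Lemma~\ref{lem: gamma_sigma=gamma_tau} yields $\gamma_{\sigma\tau}(r) = \gamma_{\tau^\sigma}(r)$.

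\emph{Reindexing.} By Lemma~\ref{lem:|T_f,e'|=|T_f,e|}, the map $T_{f,e}\ni\tau\mapsto\tau^\sigma\in T_{f,e'}$ is a bijection for each $f$. Hence
\[
\sum_{\tau \in T_{f,e}} \gamma_{\tau^\sigma}(r) = \sum_{\tau' \in T_{f,e'}} \gamma_{\tau'}(r).
\]
Summing over $f \in \Gamma_0^{\rm fin}$ gives ${\rm tr}_{\Gamma/\Delta}^{e'}(r)$.

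The only point needing care is the justification for moving $\gamma_\sigma$ inside the possibly infinitely indexed outer sum, which is handled by the finiteness remark above. The remaining steps are direct applications of the earlier lemmas.
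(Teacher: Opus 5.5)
Your proposal is correct and follows the paper's proof step by step. You expand the trace termwise, compose $\gamma_\sigma\gamma_\tau=\gamma_{\sigma\tau}$, replace $\sigma\tau$ by its representative $\tau^\sigma$ using right $\Lambda$-invariance of $\gamma$ on $C_R(R_\Lambda)$, and reindex via the bijection $T_{f,e}\to T_{f,e'}$. Your added remark that only finitely many $f$ satisfy $1_f r 1_f\neq 0$, so the trace is a finite sum, is a small point of rigor that the paper leaves implicit.
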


\begin{proof}
By Lemma~\ref{lem: gamma_sigma gamma_tau = gamma_sigmatau}, 
Lemma~\ref{lem: gamma_sigma=gamma_tau}, and
Lemma~\ref{lem:|T_f,e'|=|T_f,e|}, we have
    \begin{align*}
        \gamma_\sigma({\rm tr}_{\Gamma/\Delta}^e(r)) &= 
        \sum_{f \in \Gamma_0^{\rm fin}} \sum_{\tau \in T_{f,e}} 
        \gamma_\sigma(\gamma_\tau(r)) =
        \sum_{f \in \Gamma_0^{\rm fin}} \sum_{\tau \in T_{f,e}} 
        \gamma_{\sigma\tau}(r) \\
        &= 
        \sum_{f \in \Gamma_0^{\rm fin}} \sum_{\tau \in T_{f,e}} 
        \gamma_{\tau^\sigma}(r) = 
        \sum_{f \in \Gamma_0^{\rm fin}} \sum_{\tau' \in T_{f,e'}} 
        \gamma_{\tau'}(r) = 
        {\rm tr}_{\Gamma/\Delta}^{e'}(r).\qedhere
    \end{align*}
\end{proof}

\begin{definition}
For each $\sigma \in \Gamma$, set 
$w_\sigma:=
\sum_{i=1}^{n_\sigma} u_\sigma^{(i)}
\otimes v_{\sigma^{-1}}^{(i)}
\in R \otimes_{R_\Lambda}R$.
\end{definition}

\begin{lemma}
\label{lem: a w_tau = w_sigmatau a}
Take $(\sigma,\tau) \in \Gamma_2$ and $a \in R_\sigma$. 
Then $a w_\tau = w_{\sigma \tau} a$.
\end{lemma}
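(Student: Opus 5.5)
We mimic the proof of Lemma~\ref{lem: a gamma_tau(x)}, now working in $R \otimes_{R_\Lambda} R$ instead of $R$. Put $f := r(\sigma) = r(\sigma\tau)$ and $g := d(\tau)$. The element $a w_\tau = \sum_{i=1}^{n_\tau} a u_\tau^{(i)} \otimes v_{\tau^{-1}}^{(i)}$ lives in $R\otimes_{R_\Lambda} R$. Since $a u_\tau^{(i)} \in R_{\sigma\tau}$ and the grading is object unital, we may write $a u_\tau^{(i)} = 1_f a u_\tau^{(i)}$. We then insert $1_f = \sum_{j=1}^{n_{\sigma\tau}} u_{\sigma\tau}^{(j)} v_{(\sigma\tau)^{-1}}^{(j)}$, which gives
\[
a w_\tau = \sum_{i=1}^{n_\tau}\sum_{j=1}^{n_{\sigma\tau}} u_{\sigma\tau}^{(j)} v_{(\sigma\tau)^{-1}}^{(j)} a u_\tau^{(i)} \otimes v_{\tau^{-1}}^{(i)} .
\]

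The key point is that $c_{ji} := v_{(\sigma\tau)^{-1}}^{(j)} a u_\tau^{(i)}$ lies in $R_{(\sigma\tau)^{-1}\sigma\tau} = R_{g}$, and $R_g \subseteq R_{\Gamma_0} \subseteq R_\Lambda$ because $\Delta$ is wide, so $\Lambda$ contains all identities. Hence $c_{ji}$ can be moved across the tensor sign, $u_{\sigma\tau}^{(j)} c_{ji} \otimes v_{\tau^{-1}}^{(i)} = u_{\sigma\tau}^{(j)} \otimes c_{ji} v_{\tau^{-1}}^{(i)}$. Summing over $i$, the right factor becomes $v_{(\sigma\tau)^{-1}}^{(j)} a \sum_i u_\tau^{(i)} v_{\tau^{-1}}^{(i)} = v_{(\sigma\tau)^{-1}}^{(j)} a 1_{r(\tau)}$. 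Since $r(\tau) = d(\sigma)$ and the grading is object unital, $a 1_{d(\sigma)} = a$. We therefore obtain $\sum_j u_{\sigma\tau}^{(j)} \otimes v_{(\sigma\tau)^{-1}}^{(j)} a = w_{\sigma\tau} a$.

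The only step needing care is justifying that $c_{ji}$ belongs to $R_\Lambda$, so that it passes through $\otimes_{R_\Lambda}$. This is exactly where wideness of $\Delta$ enters: it gives $\Gamma_0 \subseteq \Lambda$. Everything else is bookkeeping with the object unital grading identities $1_{r(\rho)} x = x = x 1_{d(\rho)}$ for $x \in R_\rho$. Note that, unlike Lemma~\ref{lem: a gamma_tau(x)}, no centralizing hypothesis is needed here, because the tensor product over $R_\Lambda$ itself absorbs the degree-zero elements.
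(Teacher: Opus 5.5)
Your proposal is correct and follows essentially the same argument as the paper. You insert $1_{r(\sigma\tau)}=\sum_j u_{\sigma\tau}^{(j)}v_{(\sigma\tau)^{-1}}^{(j)}$, move the degree-$d(\tau)$ element $v_{(\sigma\tau)^{-1}}^{(j)}au_\tau^{(i)}\in R_0\subseteq R_\Lambda$ across the tensor sign, and collapse $\sum_i u_\tau^{(i)}v_{\tau^{-1}}^{(i)}=1_{r(\tau)}$ to get $w_{\sigma\tau}a$.
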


\begin{proof}
We can use the same proof as in 
\cite[Prop. 41]{CLP}. For the 
convenience of 
the reader, we include it here. We have
\[
a w_\tau = \sum_{i=1}^{n_\tau} a u_\tau^{(i)} \otimes v_{\tau^{-1}}^{(i)} 
        = \sum_{i=1}^{n_\tau} 1_{r(\sigma\tau)} a u_\tau^{(i)} \otimes v_{\tau^{-1}}^{(i)} 
        = \sum_{i=1}^{n_\tau} \sum_{j=1}^{n_{\sigma\tau}} u_{\sigma\tau}^{(j)} v_{(\sigma\tau)^{-1}}^{(j)} a u_\tau^{(i)} \otimes v_{\tau^{-1}}^{(i)}
\]
Since 
$v_{(\sigma\tau)^{-1}}^{(j)} a u_\tau^{(i)}
\in R_0 \subseteq R_\Lambda$, 
the last sum equals 
\begin{align*}
 & \sum_{i=1}^{n_\tau} \sum_{j=1}^{n_{\sigma\tau}} u_{\sigma\tau}^{(j)} \otimes v_{(\sigma\tau)^{-1}}^{(j)} a u_\tau^{(i)} v_{\tau^{-1}}^{(i)} 
        = \sum_{j=1}^{n_{\sigma\tau}} u_{\sigma\tau}^{(j)} \otimes v_{(\sigma\tau)^{-1}}^{(j)} a 1_{r(\tau)} 
        = w_{\sigma\tau} a. \qedhere  
\end{align*}  
\end{proof}

\begin{lemma}
\label{lem: w_sigma=w_tau}
Take $\sigma,\tau \in \Gamma$ with 
$\sigma$ and $\tau$ right 
$\Lambda$-equivalent. Then
$w_\sigma = w_\tau$.
\end{lemma}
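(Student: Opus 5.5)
We must show $w_\sigma = w_\tau$ in $R \otimes_{R_\Lambda} R$ when $\sigma\Lambda = \tau\Lambda$. The plan mirrors the proof of Lemma~\ref{lem: gamma_sigma=gamma_tau}: we insert a unit, use strong grading to expand it, and then move an element of $R_\Lambda$ across the tensor sign.

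Put $f := r(\sigma) = r(\tau)$ and $e := d(\sigma) = d(\tau)$. Equal domains follow from $\sigma\Lambda = \tau\Lambda$: we have $\sigma = \tau\lambda$ with $\lambda \in \Lambda$, so $\lambda \in \Delta(e')$ for some $e'$, and then $d(\sigma) = d(\lambda) = r(\lambda) = d(\tau)$. Consequently $\tau^{-1}\sigma = \lambda \in \Delta(e) \subseteq \Lambda$.

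By object unitality, $u_\sigma^{(i)} = 1_f u_\sigma^{(i)}$. Expanding $1_f = \sum_j u_\tau^{(j)} v_{\tau^{-1}}^{(j)}$ gives
\[
w_\sigma = \sum_{i=1}^{n_\sigma}\sum_{j=1}^{n_\tau} u_\tau^{(j)} v_{\tau^{-1}}^{(j)} u_\sigma^{(i)} \otimes v_{\sigma^{-1}}^{(i)}.
\]
Each $v_{\tau^{-1}}^{(j)} u_\sigma^{(i)}$ lies in $R_{\tau^{-1}\sigma} \subseteq R_\Lambda$. Since the tensor product is over $R_\Lambda$, this element moves to the right factor, and we obtain
\[
\sum_{j} u_\tau^{(j)} \otimes v_{\tau^{-1}}^{(j)} \Bigl(\sum_i u_\sigma^{(i)} v_{\sigma^{-1}}^{(i)}\Bigr) = \sum_j u_\tau^{(j)} \otimes v_{\tau^{-1}}^{(j)} 1_f.
\]
Finally $v_{\tau^{-1}}^{(j)} 1_f = v_{\tau^{-1}}^{(j)}$, because $v_{\tau^{-1}}^{(j)} \in R_{\tau^{-1}}$ and $d(\tau^{-1}) = f$. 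This yields $w_\tau$.

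The only step needing care is the middle one: checking that $\tau^{-1}\sigma$ really lies in $\Lambda$ (rather than merely in $\Delta$), so that the scalar can legitimately cross the tensor over $R_\Lambda$. This is handled by the domain argument above. Everything else is routine bookkeeping with object unitality.
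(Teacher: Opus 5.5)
Your proposal is correct and is essentially the paper's proof: write $u_\sigma^{(i)} = 1_f u_\sigma^{(i)}$, expand $1_f = \sum_j u_\tau^{(j)} v_{\tau^{-1}}^{(j)}$, move $v_{\tau^{-1}}^{(j)} u_\sigma^{(i)} \in R_{\tau^{-1}\sigma} \subseteq R_\Lambda$ across the tensor sign, and collapse $\sum_i u_\sigma^{(i)} v_{\sigma^{-1}}^{(i)}$ to $1_f$. Your extra check that $d(\sigma)=d(\tau)$, so that $\tau^{-1}\sigma$ lies in $\Lambda$ and not merely in $\Delta$, makes explicit a step the paper states without comment.
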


\begin{proof}
Put $f := r(\sigma)=r(\tau)$. We have
    \[w_\sigma = \sum_{i=1}^{n_\sigma}u_\sigma^{(i)} \otimes v_{\sigma^{-1}}^{(i)} =
    \sum_{i=1}^{n_\sigma} 1_f u_\sigma^{(i)} \otimes v_{\sigma^{-1}}^{(i)} =
\sum_{i=1}^{n_\sigma}\sum_{j=1}^{n_\tau}u_\tau^{(j)}v_{\tau^{-1}}^{(j)}u_\sigma^{(i)}\otimes v_{\sigma^{-1}}^{(i)}.\]
$\Lambda$-equivalence of $\sigma$ and 
$\tau$ yields
$v_{\tau^{-1}}^{(j)} u_\sigma^{(i)} \in 
R_{\tau^{-1}\sigma} \subseteq R_\Lambda$.
Thus, the last sum equals
\begin{align*}
& \sum_{i=1}^{n_\sigma}\sum_{j=1}^{n_\tau}
u_\tau^{(j)}\otimes 
v_{\tau^{-1}}^{(j)}u_\sigma^{(i)}
v_{\sigma^{-1}}^{(i)}=
\sum_{j=1}^{n_\tau}u_\tau^{(j)}\otimes 
v_{\tau^{-1}}^{(j)} 1_f = w_\tau. \qedhere 
\end{align*}
\end{proof}

Now we prove a result from which 
Theorem \ref{thm:main} follows
immediately.

\begin{theorem}
\label{prop: separability R over R_Delta}
    The following assertions are equivalent:
    \begin{enumerate}[\rm (a)]

\item The functor 
${\rm Res}_{R/{R_\Delta}} :
{}_R {\rm UMod} \to 
{}_{R_\Delta} {\rm UMod}$
is separable.
        
\item The ring extension $R/R_\Delta$ is separable.
        
\item For each $e \in \Gamma_0$, there
is a finite 
subset $F_e$ of
$\Gamma_0^{\rm fin}$
and $r^{e,f} \in C_{R_f}
\left (R_{\Delta(f)} \right)$,
for $f \in F_e$, with 
${\rm tr}_{\Gamma/\Delta}^e(r^e) = 1_{R_e}$,
where $r^e := \sum_{f \in F_e} r^{e,f}$.
        
\item There exist $f \in \Gamma_0$ and 
$r \in  
C_{R_0}\left( R_\Lambda \right)$ with 
${\rm tr}_{\Gamma/\Delta}^{f}( r ) = 1_f$.

\item The ring extension $R/R_\Lambda$ is separable.

\item The functor 
${\rm Res}_{R/{R_\Lambda}} :
{}_R {\rm UMod} \to 
{}_{R_\Lambda} {\rm UMod}$
is separable.

\end{enumerate}
\end{theorem}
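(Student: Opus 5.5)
The plan is to prove the cycle (b)$\Rightarrow$(c)$\Rightarrow$(d)$\Rightarrow$(e)$\Rightarrow$(b) and to attach the functor statements separately. Since $R$, $R_\Delta$ and $R_\Lambda$ all have $\{1_e\}_{e\in\Gamma_0}$ as a common complete set of idempotents, Theorem~\ref{thm:sepidempotent}(a) gives (a)$\Leftrightarrow$(b) and (e)$\Leftrightarrow$(f) directly. For (e)$\Rightarrow$(b) I will apply Corollary~\ref{cor: C-B-A}(b) to the tower $R_\Lambda\subseteq R_\Delta\subseteq R$. This uses $\Lambda\subseteq\Delta$ and the fact that all three rings share the same complete set of idempotents.

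\textbf{(d)$\Rightarrow$(e).} Let $f$ and $r\in C_{R_0}(R_\Lambda)$ satisfy ${\rm tr}^f_{\Gamma/\Delta}(r)=1_f$. Since $\Gamma$ is connected, Lemma~\ref{lem: trace and connected components} gives ${\rm tr}^{e}_{\Gamma/\Delta}(r)=\gamma_\sigma(1_f)=1_e$ for every $e$, using any $\sigma\in\Gamma(f,e)$. For each $e$ I set
\[
x_e:=\sum_{f'\in\Gamma_0^{\rm fin}}\sum_{\tau\in T_{f',e}}\sum_{i=1}^{n_\tau}u_\tau^{(i)}r\otimes v_{\tau^{-1}}^{(i)}\in R\otimes_{R_\Lambda}R.
\]
This is a finite sum, because $r\in R_0$ has only finitely many nonzero components $r1_{f'}$, and $|T_{f',e}|<\infty$ for $f'\in\Gamma_0^{\rm fin}$. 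Its image under $\mu$ is ${\rm tr}^e_{\Gamma/\Delta}(r)=1_e$. It remains to verify the condition $x_e a=ax_v$ of Theorem~\ref{thm:sepidempotent}(a) for homogeneous $a\in R_\sigma$ with $\sigma\in\Gamma(v,e)$.
\begin{itemize}
\item Because $r$ centralizes $R_\Lambda$, the element $r$ can be moved across the tensor sign inside each $w_\tau$.
\item By the same computation as in Lemma~\ref{lem: a w_tau = w_sigmatau a}, $a\,(u_\tau r\otimes v_{\tau^{-1}})$ summed over $i$ equals the corresponding expression for $\sigma\tau$, multiplied on the right by $a$.
\item By the argument of Lemma~\ref{lem: w_sigma=w_tau}, with $r$ inserted and using $r\in C(R_\Lambda)$, the $\sigma\tau$-term equals the $\tau^\sigma$-term.
\item The bijection $\tau\mapsto\tau^\sigma$ of Lemma~\ref{lem:|T_f,e'|=|T_f,e|} then reindexes the sum over $T_{f',v}$ as the sum over $T_{f',e}$.
\end{itemize}

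\textbf{(c)$\Rightarrow$(d).} Fix any $e$ and take $r:=r^e=\sum_{f\in F_e}r^{e,f}$. Each summand $r^{e,f}$ lies in $R_f$ and commutes with $R_{\Delta(f)}$. It also kills every $R_{\Delta(f'')}$ with $f''\neq f$, on both sides, by orthogonality of the idempotents. Hence $r\in C_{R_0}(R_\Lambda)$ and ${\rm tr}^e_{\Gamma/\Delta}(r)=1_e$.

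\textbf{(b)$\Rightarrow$(c), the main obstacle.} By Theorem~\ref{thm:sepidempotent}(a) there are elements $x_e\in\sum_v 1_eR1_v\otimes_{R_\Delta}1_vR1_e$ with $\mu(x_e)=1_e$ and $x_ea=ax_v$ for all $a\in 1_eR1_v$. The idea is to normalize $x_e$ and then read off the $r^{e,f}$.
\begin{enumerate}
\item Write $x_e=\sum_k a_k\otimes b_k$ with $a_k\in R_{\sigma_k}$ homogeneous.
\item Insert $1_e=\sum_j u_\tau^{(j)}v_{\tau^{-1}}^{(j)}$, where $\tau$ is a representative of the right $\Delta$-class of $\sigma_k$. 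Then $v_{\tau^{-1}}a_k\in R_\Delta$ can be moved to the right factor, giving $x_e=\sum_\tau\sum_j u_\tau^{(j)}\otimes c_{\tau,j}$.
\item Choose $r^{e,f}$ to be the component of degree $f$ of the element attached to the representatives $\tau=f$, for instance $r^{e,f}:=1_f\,c_{f,1}$ taken in degree $f$. More invariantly, take the degree-$f$ component of $\sum_k \pi(a_k)\,b_k$, where $\pi$ is the $R_\Delta$-bimodule projection $R\to R_\Delta$.
\item Use $x_e a=ax_e$ for $a\in R_{\Delta(f)}$, after projecting onto graded components, to show $r^{e,f}\in C_{R_f}(R_{\Delta(f)})$.
\item Use the relations $x_ea=ax_v$ for $a\in R_\sigma$, combined with Lemma~\ref{lem:t=gamma_sigma(s),s=gamma_sigma-1(t)}, to identify the other components of $x_e$ with $\gamma_\tau(r^{e,f})$.
\end{enumerate}
Two points are delicate. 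The projections must be well defined on $R\otimes_{R_\Delta}R$, which I will check using the direct-summand property established in the proof that ${\rm Ind}_{R/R_\Delta}$ is separable. One must also show that only $f\in\Gamma_0^{\rm fin}$ occur: the relation $\mu(x_e)=1_e$ forces $|T_{f,e}|$ to be finite wherever $r^{e,f}\neq0$, since $x_e$ is a finite sum. Granting this, $\mu(x_e)=1_e$ becomes ${\rm tr}^e_{\Gamma/\Delta}(r^e)=1_e$.
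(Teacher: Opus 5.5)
\textbf{What is fine.} Your arguments for (a)$\Leftrightarrow$(b), (e)$\Leftrightarrow$(f), (e)$\Rightarrow$(b), (c)$\Rightarrow$(d) and (d)$\Rightarrow$(e) are correct and coincide with the paper's. In particular, your $x_e$ is exactly the paper's $\sum_{f'}\sum_{\tau}\gamma_\tau(r)w_\tau$, because $\gamma_\tau(r)w_\tau=\sum_i u_\tau^{(i)}r\otimes v_{\tau^{-1}}^{(i)}$.

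\textbf{The gap: (b)$\Rightarrow$(c).} Here the decisive steps are either ill-posed or simply granted.

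First, your $r^{e,f}$ is not well defined. Every homogeneous left tensor factor of $x_e$ has range $e$, so a ``representative $\tau=f$'' occurs in $x_e$ only when $f=e$. Likewise, the degree-$f$ part of $\sum_k\pi(a_k)b_k\in 1_eR1_e$ vanishes for $f\neq e$. In step 4, the relation available for $a\in R_{\Delta(f)}$ is $x_fa=ax_f$, not $x_ea=ax_e$.

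The missing idea, which is what the paper does, is to extract $r^{e,f}$ from the separability element \emph{at $f$}. Let $c_f\in R_f$ be $\mu$ of the component of $x_f$ lying in the class $f\Delta$ and in total degree $f$; equivalently, the degree-$f$ part of $E(x_f)$, where $E(a\otimes b):=\pi(a)b$. This is the paper's $c_f=c_{f,f,f,f,f}$. Then:
\begin{itemize}
\item projecting $x_fa=ax_f$ for $a\in R_{\Delta(f)}$ gives $c_f\in C_{R_f}(R_{\Delta(f)})$;
\item projecting $x_ea=ax_f$ for $a\in R_\tau$, $\tau\in\Gamma(f,e)$, onto the $\tau\Delta$-component in total degree $\tau$, then applying $\mu$ and Lemma~\ref{lem:t=gamma_sigma(s),s=gamma_sigma-1(t)}, shows that the degree-$e$ part of the $\tau\Delta$-component of $x_e$ has $\mu$-image $\gamma_\tau(c_f)$. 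This is \eqref{eq:actionsigma}.
\end{itemize}
These projections need the decomposition $1_eR=\oplus_C R_C$ over right $\Delta$-classes $C$, which is a decomposition of right $R_\Delta$-modules, together with the total $\Gamma$-grading of $R\otimes_{R_\Delta}R$. The direct-summand property from the ${\rm Ind}$ proposition only gives you the projection $\pi$ onto the class of the identity.

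Second, even with the correct $c_f$, your final sentence is not automatic, and the naive choice of $F_e$ is wrong. With components indexed by right $\Delta$-classes, $\mu(x_e)=1_e$ says $\sum_C\gamma_{\tau_C}(c_{d(\tau_C)})=1_e$, with one term per class and $\tau_C\in C$ arbitrary. By contrast, ${\rm tr}^e_{\Gamma/\Delta}$ runs over every $f$ and every right $\Lambda$-class in $\Gamma(f,e)$. A right $\Delta$-class meets $\Gamma(f,e)$ in exactly one right $\Lambda$-class for each $f$ in the $\Delta$-connected component of its domain. Hence $F_e$ must contain at most one object $f_D$ from each $\Delta$-connected component $D$ of $\Gamma_0$, with $r^{e,f_D}:=c_{f_D}$; taking all $f$ with $c_f\neq0$ overcounts. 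For example, take the thin groupoid $\Gamma=\Delta=\{1,2\}\times\{1,2\}$, $R=M_2(S)$ graded by matrix units, and $x_e=1_e\otimes 1_e$. Then $c_1=1_1$ and $c_2=1_2$, but ${\rm tr}^1_{\Gamma/\Delta}(c_1+c_2)=2\cdot 1_1$.

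The paper's decomposition \eqref{eq:Moplus}, indexed by the middle object $f$, has the same defect. For $\delta\in\Delta(f',f)$ with $f'\neq f$, inserting $1_f=\sum_i u_\delta^{(i)}v_{\delta^{-1}}^{(i)}$ in the middle shows $M_{e,f,\tau,\tau}=M_{e,f',\tau',\tau'}\neq\{0\}$, where $\tau'\in T_{f',e}$ represents $\tau\delta\Lambda$. So that sum is not direct. Your right-$\Delta$-class bookkeeping, completed as above, is the cleaner route.

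Finally, finiteness of $[\Gamma(f_D):\Delta(f_D)]$ does not come from $\mu(x_e)=1_e$. It comes from two facts: $x_e$ has only finitely many nonzero components, and $\gamma_\tau$ is injective on $C_R(R_0)$ (Lemma~\ref{lem: gamma_sigma gamma_tau = gamma_sigmatau} gives $\gamma_{\tau^{-1}}\gamma_\tau(c)=c$). So $c_{f_D}\neq 0$ forces all $|T_{f_D,e}|$ components $\gamma_\tau(c_{f_D})$ to be nonzero, and these lie in distinct right $\Delta$-classes.
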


\begin{proof}
(a)$\Rightarrow$(b): This follows from
Theorem \ref{thm:sepidempotent}(a).

(b)$\Rightarrow$(c):
Suppose $R/R_\Delta$ is separable. 
By Theorem~\ref{thm:sepidempotent}(a), 
there is, for any $e \in \Gamma_0$,
$y_e \in \sum_{f \in \Gamma_0} 
1_e R 1_f \otimes 1_f R 1_e \subseteq
R \otimes_{R_\Delta} R$ such that 
$\mu(y_e) = 1_e$ and $y_e r = r y_u$,
for $u \in \Gamma_0$ and $r \in 1_e R 1_u$.

Set $M := R \otimes_{R_\Delta} R$. Then
$M = \oplus_{\tau \in T} 
R_{\tau \Lambda} \otimes_{R_{\Delta}}
R_{\Lambda \tau^{-1}}$.
For all $e,f \in \Gamma_0$ and all
$\tau_1,\tau_2 \in T_{f,e}$ we put
$M_{e,f,\tau_1,\tau_2} :=
R_{\tau_1 \Delta(f)} \otimes_{R_\Delta} 
R_{\Delta(f) \tau_2^{-1}}$. Then
\begin{equation}\label{eq:Moplus}
M = \oplus_{e,f \in \Gamma_0,
\tau_1,\tau_2 \in T_{f,e}}
M_{e,f,\tau_1,\tau_2}
\end{equation}
as additive groups. Take 
$\gamma \in \Gamma$. Put 
$M_\gamma := \sum_{(\alpha,\beta) \in \Gamma_2, \alpha \beta = \gamma}
R_\alpha \otimes_{R_\Delta} R_\beta$.
Then
\begin{equation}\label{eq:Moplusagain}
M = \oplus_{\gamma \in \Gamma} M_\gamma
\end{equation}
as additive groups.
For any $e,f \in \Gamma_0$,
$\tau_1,\tau_2 \in T_{f,e}$ and
$\gamma \in \Gamma$ we set
\[
M_{e,f,\tau_1,\tau_2,\gamma} :=
M_{e,f,\tau_1,\tau_2} \cap M_\gamma.
\]
By \eqref{eq:Moplus} and 
\eqref{eq:Moplusagain}, we get
\begin{equation}
M = \oplus_{e,f \in \Gamma_0,
\tau_1,\tau_2 \in T_{f,e},
\gamma \in \tau_1 \Delta(f) \tau_2^{-1}}
M_{e,f,\tau_1,\tau_2,\gamma}
\end{equation}
as additive groups. From this it 
follows that each $y_e$, 
for $e \in \Gamma_0$, can be written
\[
y_e = \sum _{f \in \Gamma_0, 
\tau_1,\tau_2 \in T_{f,e},
\gamma \in \tau_1 \Delta(f) \tau_2^{-1}}
z_{e,f,\tau_1,\tau_2,\gamma}
\]
for some unique 
$z_{e,f,\tau_1,\tau_2,\gamma} 
\in M_{e,f,\tau_1,\tau_2,\gamma}$
such that 
$z_{e,f,\tau_1,\tau_2,\gamma}  = 0$
for all but finitely many of the
indexes $f,\tau_1,\tau_2,\gamma$.

Take $e,g \in \Gamma_0$,
$\gamma \in \Gamma(e)$,
$\sigma \in \Gamma(e,g)$ and
$r_\sigma \in R_\sigma$. 
By the assumptions, $r y_e = y_g r$.
Now we project both $r y_e$ and 
$y_g r$ into 
$M_{g,f,\tau_1^\sigma,\tau_2^\sigma,\sigma\gamma}$ yielding 
\begin{equation}\label{eq:repeatedly}
r_\sigma z_{e,f,\tau_1,\tau_2,\gamma} = 
z_{g,f,\tau_1^\sigma,\tau_2^\sigma,
\sigma \gamma \sigma^{-1}} r_\sigma.
\end{equation}
Now put 
\begin{equation}\label{eq:defxe}
x_e := \sum _{f \in \Gamma_0, 
\tau_1,\tau_2 \in T_{f,e}}
z_{e,f,\tau_1,\tau_2,e}.
\end{equation}
Note that 
\begin{equation}\label{eq:M=0}
M_{e,f,\tau_1,\tau_2,e} = \{ 0 \}
\quad \mbox{for} \quad 
\tau_1 \neq \tau_2.
\end{equation}
Indeed, if 
$M_{e,f,\tau_1,\tau_2,e} \neq \{ 0 \}$,
then $e \in \tau_1 \Delta(f) \tau_2^{-1}$
so that $\tau_2 \in \tau_1 \Delta(f)$
and hence $\tau_1$ and $\tau_2$
are right $\Lambda$-equivalent
which implies that $\tau_1 = \tau_2$.
Thus, by \eqref{eq:defxe} 
and \eqref{eq:M=0},
\begin{equation}\label{eq:xe=ze}
x_e = \sum _{f \in \Gamma_0, 
\tau \in T_{f,e}}
z_{e,f,\tau,\tau,e}.
\end{equation}
Now put
$c_{e,f,\tau,\tau,e} = 
\mu(z_{e,f,\tau,\tau,e})$.
Since $\mu(y_e)=1_e$ and 
$\mu( M_{e,f,\tau_1,\tau_2,\gamma} )
\subseteq R_\gamma$, for 
$\gamma \in \Gamma$,
\eqref{eq:xe=ze} implies that 
$\mu(x_e)=1_e$. Therefore,
\begin{equation}\label{eq:equals1e}
\sum_{f \in \Gamma_0, \tau \in T_{f,e}}
c_{e,f,\tau,\tau,e} = 1_e.
\end{equation}
If we put $\gamma=e$ in
\eqref{eq:repeatedly}, then
$r_\sigma z_{e,f,\tau,\tau,e} =
z_{g,f,\tau^\sigma,\tau^\sigma,g} r_\sigma.$
Applying $\mu$ to this equality yields
\begin{equation}\label{eq:cgf=cef}
c_{g,f,\tau^\sigma,\tau^\sigma,g} r_\sigma 
=
r_\sigma c_{e,f,\tau,\tau,e}.
\end{equation}
Using Lemma
\ref{lem:t=gamma_sigma(s),s=gamma_sigma-1(t)} this implies that
\begin{equation}\label{eq:actionsigma}
c_{g,f,\tau^\sigma,\tau^\sigma,g} =
\gamma_\sigma( c_{e,f,\tau,\tau,e} ).
\end{equation}
Put $c_e := c_{e,e,e,e,e}$.
Specializing \eqref{eq:cgf=cef}
with $e=f=g=\tau$ and $\sigma \in \Delta(e)$ yields
$c_e \in Z(R_{\Delta(e)})$.
Since $c_e \in R_e$,
it therefore follows that 
$c_e \in C_{R_e}(R_{\Delta(e)})$.

Fix $e \in \Gamma_0$.
The set 
$F_e := \{ f \in \Gamma_0 \mid 
\exists \tau \in T_{f,e} \  
c_{e,f,\tau,\tau,e} \neq 0 \}$
is finite and nonempty by 
\eqref{eq:equals1e}.
Take $n_e \in \N$ and
$f_i^{(e)} \in [e]$, for 
$i = 1,\ldots,n_e$, such that
$F_e = \{ f_i^{(e)} \}_{i=1}^{n_e}$.
Fix $i \in \{ 1,\ldots,n_e \}$.
By \eqref{eq:actionsigma}, 
$\gamma_\tau( c_{f_i^{(e)}} ) = 
c_{e,f_i^{(e)},\tau,\tau,e} \neq 0$
for some $\tau \in T_{f_i^{(e)}, e}$.
But then 
$c_{f_i^{(e)}} \neq 0$ also,
which implies that 
$c_{e,f_i^{(e)},\tau,\tau,e} = 
\gamma_\tau( c_{f_i^{(e)}} ) \neq 0$
for all $\tau \in T_{f_i^{(e)}, e}$.
Therefore,
$[\Gamma(f_i^{(e)}) : \Delta(f_i^{(e)})] 
= |T_{f_i^{(e)},e}| < \infty$.

Now set $r^{e,f_i^{(e)}} := 
c_{f_i^{(e)}}$ and
$r^e := \sum_{i=1}^{n_e} r^{e,f_i^{(e)}}$.
Then $r^{e,f_i^{(e)}} \in 
C_{R_{f_i^{(e)}}}
\left( R_{\Delta(f_i^{(e)})} \right)$.
By \eqref{eq:equals1e} and
\eqref{eq:actionsigma}, we get
\[
\begin{array}{rcl}
{\rm tr}_{\Gamma / \Delta}^e(r^e) 
&=&  
\displaystyle
\sum_{i=1}^{n_e} 
\sum_{f \in \Gamma_0^{\rm fin}}
\sum_{\tau \in T_{f,e}} 
\gamma_\tau \left( r^{e,f_i^{(e)}} \right)
=
\displaystyle
\sum_{i=1}^{n_e} 
\sum_{\tau \in T_{f_i^{(e)}, e}}
\gamma_\tau \left( c_{f_i^{(e)}} \right)
\\
&=&
\displaystyle
\sum_{i=1}^{n_e} 
\sum_{\tau \in T_{f_i^{(e)}, e}}
c_{e,f_i^{(e)},\tau,\tau,e}
=
\displaystyle
\sum_{f \in \Gamma_0, \tau \in T_{f,e}} 
c_{e,f,\tau,\tau,e}
= 1_e.
\end{array}
\]

(c)$\Rightarrow$(d): This follows from
the equality $C_{R_0}(R_\Lambda) = 
\oplus_{f \in \Gamma_0} 
C_{R_f}(R_{\Delta(f)})$.

(d)$\Rightarrow$(e):
Suppose that (d) holds.
Take $f \in \Gamma_0$ and 
$r \in C_{R_0}(R_\Lambda)$ such that 
${\rm tr}_{\Gamma/\Delta}^f(r) = 1_f$.
Take $e \in \Gamma_0$. Now put 
\[
x_e := \sum_{f' \in \Gamma_0^{\rm fin}} 
\sum_{\tau \in T_{f',e}} \gamma_\tau(r) w_\tau \in R \otimes_{R_\Lambda} R.
\]
Fix $\sigma \in \Gamma(f, e)$.
By Lemma~\ref{lem: trace and connected components}, we get
\begin{align*}
    \mu( x_e ) &= 
    \sum_{f' \in \Gamma_0^{\rm fin}} 
    \sum_{\tau \in T_{f',e}} 
    \gamma_\tau(r) \mu( w_\tau ) =
    \sum_{f' \in \Gamma_0^{\rm fin}} 
    \sum_{\tau \in T_{f',e}} 
    \gamma_\tau(r) 1_e \\
    &=
    {\rm tr}_{\Gamma/\Delta}^e(r) = \gamma_\sigma({\rm tr}_{\Gamma/\Delta}^f(r)) = \gamma_\sigma(1_f) = 1_e.
\end{align*}
Take $u,v \in \Gamma_0$ and 
$s \in 1_u R 1_v$.
We wish to show that $s x_v = x_u s$. 
It is enough to show $s x_v = x_u s$ 
for $s \in R_\sigma$ and 
$\sigma \in \Gamma(v,u)$.
By Lemmas~\ref{lem: a gamma_tau(x)},
\ref{lem: gamma_sigma=gamma_tau},
\ref{lem:|T_f,e'|=|T_f,e|}, 
\ref{lem: a w_tau = w_sigmatau a},
\ref{lem: w_sigma=w_tau}, we get
\[
\begin{array}{rcl}
s x_v &=& \displaystyle
\sum_{f' \in \Gamma_0^{\rm fin}} 
\sum_{\tau \in T_{f',v}} s \gamma_\tau(r)
w_\tau = 
\sum_{f' \in \Gamma_0^{\rm fin}} 
\sum_{\tau \in T_{f',v}} 
\gamma_{\sigma \tau}(r) 
w_{\sigma \tau} s
\\
&=& \displaystyle 
\sum_{f' \in \Gamma_0^{\rm fin}} 
\sum_{\tau \in T_{f',v}}
\gamma_{\tau^\sigma}(r) 
w_{\tau^\sigma} s =
\sum_{f' \in \Gamma_0^{\rm fin}} 
\sum_{\tau \in T_{f',u}} 
\gamma_\tau(r) w_\tau s = x_u s.
\end{array}
\]
By Theorem~\ref{thm:sepidempotent}(a) $R/R_\Lambda$
is separable.

(e)$\Rightarrow$(f):
Follows from Theorem
\ref{thm:sepidempotent}(a).

(f)$\Rightarrow$(a):
Follows from 
Theorem
\ref{thm:sepidempotent}(a), 
Corollary~\ref{cor: C-B-A}(b) and
$R_\Lambda\subseteq R_\Delta \subseteq R$. 
\end{proof}

\begin{remark}\label{rem:gap}
Suppose that $R/R_\Delta$ is separable.
By Theorem~\ref{prop: separability R over R_Delta}(c)
there is $f \in \Gamma_0$ with 
$[\Gamma(f) : \Delta(f)] < \infty$.
In particular, if $\Gamma = G$ a group
and $\Delta = H$ a subgroup of $G$,
then we can conclude that 
$[G:H] < \infty$. By Theorem~\ref{prop: separability R over R_Delta}(d)
there exists $r \in C_{R_e}(R_H)$ such
that ${\rm tr}_{G/H}(r) = 1_R$.
This is a sharpening of the 
condition from Theorem \ref{thm:miyashita}.
Indeed, if we specialize to the case where 
$R$ is commutative, then 
$C_{R_e}(R_H) = R_e$ whereas
$Z(R_H) = R_H$. Also, our proof of
Theorem 
\ref{prop: separability R over R_Delta}
fills a gap in the proof 
of Theorem \ref{thm:miyashita} from
\cite[Thm. 2.1]{TheohariApostolidi}.
Indeed, in \cite[p.~98]{TheohariApostolidi}
the argument there appears to rely on the 
incorrect conclusion that
$\sum_i s_{kH}^{(i)} \tau_{H t^{-1}}^{(i)}
= 0$ for $s \neq t$. What one can 
conclude is that
\[
\sum_i \left( 
s_{kH}^{(i)} \tau_{H t^{-1}}^{(i)} 
\right)_1 = 0, 
\]
for $s \neq t$, 
where $(\cdot)_1$ denotes projection
onto $R_1$ (using the notation of 
\cite{TheohariApostolidi}).
\end{remark}

We summarize the conclusions of the
above remark in the following corollary.

\begin{corollary}\label{cor:miyashita}
Let $R$ be a unital ring that is 
strongly graded by the group $G$. Suppose that $H$ 
is a subgroup of $G$. 
Then $R/R_H$ is separable
if and only if $[G:H]$ is finite and 
there exists $r \in C_{R_e}(R_H)$ 
such that $\tr_{G/H}(r) = 1_R$.
\end{corollary}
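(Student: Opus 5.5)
Corollary~\ref{cor:miyashita} is the special case of Theorem~\ref{prop: separability R over R_Delta} in which $\Gamma = G$ is a group, regarded as a groupoid with one object $e$, and $\Delta = H$. The plan is to check that the hypotheses and the conditions of that theorem translate correctly, and then to read off both directions.

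First, the setup. $G$ has one object, so it is connected and $\Delta = H$ is a wide subgroupoid. A unital ring strongly graded by $G$ is object unital: from $R_eR_e = R_e$ and $1_R \in R_e$ we get $1_{R_e} = 1_R$, which acts as the identity on every $R_g$. Next, $\Lambda = {\rm Iso}(H) = H$, so $R_\Lambda = R_H$, $R_0 = R_e$, and $C_{R_0}(R_\Lambda) = C_{R_e}(R_H)$. The set $T = T_{e,e}$ is a left transversal of $H$ in $G$, because right $\Lambda$-equivalence means $\sigma H = \tau H$, and $T$ contains $e$. Moreover $\Gamma_0^{\rm fin}$ equals $\{e\}$ if $[G:H] < \infty$ and is empty otherwise.

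For the forward direction, suppose $R/R_H$ is separable. By the implication (b)$\Rightarrow$(c) of Theorem~\ref{prop: separability R over R_Delta}, there is a finite $F_e \subseteq \Gamma_0^{\rm fin}$ with ${\rm tr}^e_{\Gamma/\Delta}(r^e) = 1_e \neq 0$.
\begin{itemize}
\item If $\Gamma_0^{\rm fin}$ were empty, the trace map would be identically zero, which is a contradiction. Hence $[G:H] < \infty$, as already observed in Remark~\ref{rem:gap}.
\item Once the index is finite, ${\rm tr}^e_{\Gamma/\Delta}$ coincides with $\tr_{G/H}$ from the introduction. The element $r = r^e$ lies in $C_{R_e}(R_H)$, which is exactly condition (d) with $f = e$.
\item Here $1_e = 1_R$, so this gives $\tr_{G/H}(r) = 1_R$.
\end{itemize}

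For the converse, suppose $[G:H] < \infty$ and that some $r \in C_{R_e}(R_H)$ satisfies $\tr_{G/H}(r) = 1_R$. Then $\tr_{G/H}$ equals ${\rm tr}^e_{\Gamma/\Delta}$, so (d) holds with $f = e$, and (d)$\Rightarrow$(b) gives that $R/R_H$ is separable. The only care needed is the choice of transversal. The $\gamma_\tau$ used in the introduction are built from a decomposition of $1_R$, and these agree with Definition~\ref{def:nuv}. The value of the trace on $C_R(R_\Lambda)$ does not depend on the chosen transversal, by Lemma~\ref{lem: gamma_sigma=gamma_tau}.

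I expect no real obstacle. The one point to verify is the identification of the two trace maps, including the degenerate infinite-index case where the groupoid trace vanishes. The rest is direct bookkeeping.
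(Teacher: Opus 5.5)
Your proposal is correct and follows essentially the paper's route: the corollary is Theorem~\ref{prop: separability R over R_Delta} specialized to the one-object groupoid $G$ with $\Delta = H$. Condition (c) forces $\Gamma_0^{\rm fin} = \{e\}$, i.e.\ $[G:H]<\infty$, exactly as in Remark~\ref{rem:gap}, and (d)$\Leftrightarrow$(b) supplies the element $r \in C_{R_e}(R_H)$ and the converse. One refinement is worth making. The introduction's $\gamma_g$ agree with those of Definition~\ref{def:nuv}, which normalizes $n_e=1$ and $u_e^{(1)}=v_e^{(1)}=1_e$, on $r \in C_{R_e}(R_H) \subseteq C_R(R_0)$. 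This is justified by the lemma stating that $\gamma_\sigma(x)$ is independent of the chosen $u_\sigma^{(i)}, v_{\sigma^{-1}}^{(i)}$ for $x \in C_R(R_0)$, which complements your appeal to Lemma~\ref{lem: gamma_sigma=gamma_tau} for the choice of transversal.
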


\begin{corollary}
\label{prop: sufficent conditions for separability}
Suppose that there is $e \in \Gamma_0$ 
such that $R_{\Gamma(e)}/R_{\Delta(e)}$ is 
separable. Then $R/R_\Delta$ is separable.
\end{corollary}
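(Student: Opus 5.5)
Since we are in the standing setting of the section ($\Gamma$ connected, $R$ strongly graded and object unital), the plan is to reduce the corollary to condition (d) of Theorem~\ref{prop: separability R over R_Delta}. The ring $A := R_{\Gamma(e)}$ is unital with $1_A = 1_e$. It is strongly graded by the group $\Gamma(e)$, and $B := R_{\Delta(e)}$ is its subring graded by the subgroup $\Delta(e)$. So $A$ is itself an object unital strongly $\Gamma(e)$-graded ring, and $\Delta(e)$ is a wide subgroupoid of the one-object groupoid $\Gamma(e)$. Its isotropy groupoid is $\Delta(e)$ itself, so ``$\Lambda$'' for this pair is just $\Delta(e)$.

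First I apply Theorem~\ref{prop: separability R over R_Delta}, implication (b)$\Rightarrow$(d), to the extension $A/B$; the connectedness hypothesis holds trivially. This yields an element $r \in C_{R_e}(R_{\Delta(e)})$ with
\[
\sum_{\tau \in T'} \gamma_\tau(r) = 1_e,
\]
where $T'$ is a set of representatives of the left cosets $\tau\Delta(e)$ in $\Gamma(e)$. The same theorem also gives finiteness of the index $[\Gamma(e):\Delta(e)]$, since otherwise the trace would be an empty sum.

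Next I reinterpret this inside $R$. Choose the representative set $T$ for right $\Lambda$-equivalence on $\Gamma$ so that $T_{e,e} = T_e$ equals $T'$. This is legitimate because $\sigma\Lambda = \tau\Lambda$ for $\sigma,\tau \in \Gamma(e)$ means exactly $\sigma\Delta(e) = \tau\Delta(e)$. The maps $\gamma_\tau$ for $\tau \in \Gamma(e)$ can be computed with the same elements $u_\tau^{(i)}, v_{\tau^{-1}}^{(i)}$ in $A$ as in $R$. Moreover, because $r$ lies in $R_e$, the trace value does not depend on the choice of $T$, by Lemma~\ref{lem: gamma_sigma=gamma_tau}.

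The main point to check is that $r \in C_{R_0}(R_\Lambda)$ and not merely $r \in C_{R_e}(R_{\Delta(e)})$. This holds since $r \in R_e$ and $R_\Lambda = \oplus_{f} R_{\Delta(f)}$. For $f \neq e$ we get $r R_{\Delta(f)} = 0 = R_{\Delta(f)} r$ by the grading condition $R_\sigma R_\tau = 0$ for non-composable $(\sigma,\tau)$. Hence $r$ commutes with all of $R_\Lambda$.

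It remains to compute ${\rm tr}^e_{\Gamma/\Delta}(r)$ in $R$. For $f \neq e$ and $\tau \in T_{f,e}$, we have $d(\tau) = f$, so $u_\tau^{(i)} r = u_\tau^{(i)} 1_f r = 0$ because $1_f 1_e = 0$. Thus those terms vanish, and since $e \in \Gamma_0^{\rm fin}$,
\[
{\rm tr}^e_{\Gamma/\Delta}(r) = \sum_{\tau\in T_e}\gamma_\tau(r) = 1_e .
\]
Therefore condition (d) of Theorem~\ref{prop: separability R over R_Delta} holds with $f = e$, and that theorem gives that $R/R_\Delta$ is separable. The only delicate step is the identification of the trace maps and representative sets in $A$ and in $R$; the rest is bookkeeping.
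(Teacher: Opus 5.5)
Your proof is correct and follows the paper's argument. The paper obtains finite index and an $r \in C_{R_e}(R_{\Delta(e)})$ with ${\rm tr}_{\Gamma(e)/\Delta(e)}(r) = 1_e$ from Corollary~\ref{cor:miyashita}, which is exactly your application of Theorem~\ref{prop: separability R over R_Delta} to the one-object groupoid $\Gamma(e)$. It then uses $C_{R_e}(R_{\Delta(e)}) \subseteq C_{R_0}(R_\Lambda)$, identifies ${\rm tr}^e_{\Gamma/\Delta}(r)$ with the group relative trace, and concludes via condition (d).

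Your checks that $r$ kills $R_{\Delta(f)}$ for $f \neq e$, and that the terms with $\tau \in T_{f,e}$, $f \neq e$, vanish, supply details the paper leaves implicit. One small fix: independence of the trace from the choice of $T$ follows from $r \in C_R(R_\Lambda)$ via Lemma~\ref{lem: gamma_sigma=gamma_tau}, not merely from $r \in R_e$. You prove $r \in C_R(R_\Lambda)$ only in the next paragraph, and since you choose $T_e = T'$, this independence isn't needed anyway.
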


\begin{proof}
Put $G := \Gamma(e)$ and 
$H := \Delta(e)$. By 
Corollary \ref{cor:miyashita},
$[G:H] < \infty$ and there is
$r\in C_{R_{e}}(R_H) \subseteq 
C_{R_0}(R_\Lambda)$ with 
${\rm tr}_{G/H}(r) = 1_e$. 
Since $T_e$ is a left transversal of 
$H$ in $G$, we get
${\rm tr}^e_{\Gamma/\Delta}(r) = 
\sum_{f \in \Gamma_0^{\rm fin}} \sum_{\tau\in T_{f,e}}\gamma_{\tau}(r) =
\sum_{\tau\in T_e}\gamma_{\tau}(r) 
= {\rm tr}_{G/H}(r) = 1_e.$
Therefore,
by Theorem~\ref{prop: separability R over R_Delta}(d), $R/{R_\Delta}$ is 
separable.
\end{proof}

\section{Separability: normal case}\label{sec:separabilitynormalcase}

In this section, we keep the 
notation and assumptions 
from the previous section 
and we make the following additional 
assumption:
\begin{itemize}

\item $\Delta$ is a \emph{normal}
subgroupoid of $\Gamma$.

\end{itemize}
Recall that this means that $\Delta$ is a 
wide subgroupoid
of $\Gamma$ such that, for all $e,f\in\Gamma_0$
and $\sigma \in \Gamma(e,f)$, 
we have $\sigma \Delta(e) \sigma^{-1}
\subseteq \Delta(f)$.
In particular, for each $e \in \Gamma_0$,
$\Delta(e)$ is a normal subgroup of $\Gamma(e)$
and we can form the quotient group
$\Gamma(e)/\Delta(e)$; if $\sigma \in \Gamma(e)$, 
we let $\overline{\sigma}$ denote the coset
$\sigma \Delta(e)$. 
Using this notation, we can describe the
sets $T_{f,e}$ from 
Lemma~\ref{lem:|T_f,e'|=|T_f,e|} more concretely:

\begin{lemma}
\label{lem: |T_e,e|=|T_f,e|}
Let $e,f\in\Gamma_0$ and $\tau\in T_{f,e}$. 
The map $\alpha :
\Gamma(e)/\Delta(e) \ni \overline{\sigma}
\mapsto \tau^\sigma \in T_{f,e}$ is a 
bijection with inverse
$\beta: T_{f,e} \ni \rho \mapsto 
\overline{\rho \tau^{-1}}
\in \Gamma(e)/\Delta(e)$.
\end{lemma}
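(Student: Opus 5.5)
The plan is a direct verification: check that $\alpha$ and $\beta$ are well defined, then show that both composites are identities. Normality of $\Delta$ is used exactly where right multiplication by $\Lambda$ has to be moved across $\tau$. Throughout, $\tau : f \to e$, so $\tau^{-1} : e \to f$.

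First, $\alpha$ is well defined. For $\sigma \in \Gamma(e)$ we have $\sigma\tau \in \Gamma(f,e)$, so by Definition~\ref{def:sigmaaction} $\tau^\sigma \in T_{f,e}$ makes sense. We must show that it depends only on the coset $\overline{\sigma}$. Suppose $\sigma' = \sigma\delta$ with $\delta \in \Delta(e)$. Then $\sigma'\tau = \sigma\tau(\tau^{-1}\delta\tau)$. Normality, applied to the morphism $\tau^{-1} \in \Gamma(e,f)$, gives $\tau^{-1}\delta\tau \in \Delta(f) \subseteq \Lambda$. Hence $\sigma'\tau\Lambda = \sigma\tau\Lambda$, and by uniqueness of representatives in $T$ we get $\tau^{\sigma'} = \tau^\sigma$.

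Second, $\beta$ is well defined. For $\rho \in T_{f,e}$, the composite $\rho\tau^{-1}$ is defined and lies in $\Gamma(e,e) = \Gamma(e)$. So $\overline{\rho\tau^{-1}}$ is a well-defined coset, and nothing needs checking.

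Next we compute the two composites.

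\emph{$\alpha\circ\beta = \id$.} For $\rho \in T_{f,e}$, $\alpha(\beta(\rho))$ is the element of $T_{f,e}$ that is right $\Lambda$-equivalent to $(\rho\tau^{-1})\tau = \rho$. Since $\rho \in T$ already, this element is $\rho$ itself.

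\emph{$\beta\circ\alpha = \id$.} By definition $\tau^\sigma\Lambda = \sigma\tau\Lambda$, and since both morphisms have domain $f$, we can write $\tau^\sigma = \sigma\tau\lambda$ with $\lambda \in \Delta(f)$. Then
\[
\tau^\sigma\tau^{-1} = \sigma(\tau\lambda\tau^{-1}).
\]
Normality, now applied to $\tau \in \Gamma(f,e)$, gives $\tau\lambda\tau^{-1} \in \Delta(e)$. Therefore $\overline{\tau^\sigma\tau^{-1}} = \overline{\sigma}$.

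The only step needing any care is the well-definedness of $\alpha$, since it needs normality in the direction $e \to f$. The same conjugation argument also handles $\beta\circ\alpha$, so there is no genuine obstacle.
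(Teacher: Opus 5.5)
Your proof is correct and follows the same route as the paper: you show $\alpha$ is well defined via $\tau^{-1}\Delta(e)\tau \subseteq \Delta(f)$, note that $\beta$ is trivially well defined, and then check both composites using uniqueness of representatives in $T_{f,e}$. In the step $\beta\circ\alpha = {\rm id}$ you make explicit the use of normality, $\tau\Delta(f)\tau^{-1}\subseteq\Delta(e)$, which the paper uses only tacitly.
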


\begin{proof}
First we show that $\alpha$ is well defined.
Take $\sigma_1,\sigma_2 \in \Gamma(e)$
with $\overline{\sigma_1} = \overline{\sigma_2}$.
Then $\sigma_2^{-1} \sigma_1 \in \Delta(e)$.
Since $\Delta$ is a normal subgroupoid of
$\Gamma$, we get
\[
(\sigma_2 \tau)^{-1} \sigma_1 \tau
=
\tau^{-1} \sigma_2^{-1} \sigma_1 \tau
\in
\tau^{-1} \Delta(e) \tau
\subseteq \Delta(f),
\]
so $\sigma_1\tau$ and $\sigma_2\tau$ are right
$\Lambda$-equivalent. Hence
$\tau^{\sigma_1}=\tau^{\sigma_2}$.
Thus $\alpha$ is well defined.
The map $\beta$ is well defined since, for
$\rho \in T_{f,e} \subseteq \Gamma(f,e)$, we have $\rho \tau^{-1} \in \Gamma(e)$.

Now let $\overline{\sigma} \in \Gamma(e)/\Delta(e)$.
Since $\tau^\sigma$ is right $\Lambda$-equivalent to
$\sigma\tau$, 
$\tau^\sigma \tau^{-1} \in 
\sigma \Delta(e)$, and therefore
$(\beta \circ \alpha)(\overline{\sigma})
=
\beta(\tau^\sigma)
=
\overline{\tau^\sigma \tau^{-1}}
=
\overline{\sigma}$.
Hence $\beta \circ \alpha = {\rm id}_{\Gamma(e)/\Delta(e)}$.

Next, let $\rho \in T_{f,e}$.
Then
$(\alpha \circ \beta)(\rho)
=
\alpha(\overline{\rho\tau^{-1}})
=
\tau^{\rho\tau^{-1}}$.
Since
$(\rho\tau^{-1})\tau = \rho$,
the elements $\tau^{\rho\tau^{-1}}$ and $\rho$
belong to $T_{f,e}$ and are right $\Lambda$-equivalent.
By uniqueness of representatives in $T_{f,e}$, it follows that
$\tau^{\rho\tau^{-1}}=\rho$.
Thus
$\alpha \circ \beta = {\rm id}_{T_{f,e}}$.
\end{proof}

\begin{example}
Lemma~\ref{lem: |T_e,e|=|T_f,e|} does
not hold if we omit the hypothesis
that $\Delta$ is a normal subgroupoid of
$\Gamma$. Indeed, let $G$ be a
nontrivial group with
identity element $e_G$.
Consider the groupoid
$\Gamma := \{1,2\}\times G\times \{1,2\}$
with objects $\Gamma_0=\{1,2\}$,
morphisms
$\Gamma(j,i)=\{(i,g,j):g\in G\}$,
and composition defined by
$(i,g,j)(j,h,k) = (i,gh,k)$
for all $g,h\in G$ and $i,j,k\in\{1,2\}$.
Consider the wide subgroupoid
\[
\Delta :=
\{(1,g,1):g\in G\}\cup\{(2,e_G,2)\}
\]
of $\Gamma$.
If we put $e:=1$ and $f:=2$, then
$|T_{e,e}|=1$, but $|T_{f,e}| = |G| > 1$.
Indeed, $\Delta(e)=G$, so there is only one right $\Lambda$-equivalence class in $\Gamma(e,e)$, whereas $\Delta(f)=\{e_G\}$, so distinct elements of $\Gamma(e,f)$ remain in distinct right $\Lambda$-equivalence classes.
\end{example}

\begin{lemma}
\label{lem: gamma_sigma restricts to}
Take $e,f \in \Gamma_0$ and $\sigma \in 
\Gamma(e,f)$. 
The map $\gamma_\sigma : R \to R$ restricts to
an additive isomorphism 
$\gamma_\sigma' :
R_{\Delta(e)} \to R_{\Delta(f)}$. 
This map in turn restricts to a ring 
iso\-morphism
$\gamma_\sigma'' : 
Z( R_{\Delta(e)} ) \to Z( R_{\Delta(f)} )$
which restricts to a ring isomorphism 
$\gamma_\sigma''' : C_{R_e}(R_{\Delta(e)}) 
\to C_{R_f}(R_{\Delta(f)})$.
\end{lemma}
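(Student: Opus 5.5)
The plan is to establish the three restrictions in turn. The key input is the normality of $\Delta$. The candidate inverse of each restriction is the corresponding restriction of $\gamma_{\sigma^{-1}}$.

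\emph{Step 1 (image).} Take $\delta \in \Delta(e)$ and $x \in R_\delta$. Each summand $u_\sigma^{(i)} x v_{\sigma^{-1}}^{(i)}$ lies in $R_{\sigma\delta\sigma^{-1}}$. By normality, $\sigma\delta\sigma^{-1}\in\Delta(f)$, so $\gamma_\sigma(R_{\Delta(e)})\subseteq R_{\Delta(f)}$. Since $\sigma^{-1}\in\Gamma(f,e)$, normality also gives $\sigma^{-1}\Delta(f)\sigma\subseteq\Delta(e)$. Hence $\sigma\Delta(e)\sigma^{-1}=\Delta(f)$, and $\gamma_{\sigma^{-1}}(R_{\Delta(f)})\subseteq R_{\Delta(e)}$.

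\emph{Step 2 (additive isomorphism $\gamma_\sigma'$).} I want to show that $\gamma_{\sigma^{-1}}\circ\gamma_\sigma$ is the identity on $R_{\Delta(e)}$, and symmetrically for the other composite. Lemma \ref{lem: gamma_sigma gamma_tau = gamma_sigmatau} cannot be quoted directly, because an element of $R_{\Delta(e)}$ need not commute with $R_0$. I therefore expand directly:
\[
\gamma_{\sigma^{-1}}(\gamma_\sigma(x))=\sum_{i,j}u_{\sigma^{-1}}^{(j)}u_\sigma^{(i)}\,x\,v_{\sigma^{-1}}^{(i)}v_\sigma^{(j)},
\]
where $u_{\sigma^{-1}}^{(j)}u_\sigma^{(i)}\in R_e$ and $v_{\sigma^{-1}}^{(i)}v_\sigma^{(j)}\in R_e$. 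The aim is to collapse this to $1_e x 1_e = x$ using the two relations $\sum_j u_{\sigma^{-1}}^{(j)}v_\sigma^{(j)}=1_e$ and $\sum_i u_\sigma^{(i)}v_{\sigma^{-1}}^{(i)}=1_f$.

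This collapse is the main obstacle. The right factors $u_\sigma^{(i)}$ and $v_{\sigma^{-1}}^{(i)}$ are separated by $x$, so they cannot simply be moved past it. My first attempt will be to exploit the freedom in the choice of the elements for $\sigma^{-1}$ (Definition \ref{def:nuv} fixes them, but they are constrained only by $1_e=\sum_j u_{\sigma^{-1}}^{(j)}v_\sigma^{(j)}$). Concretely, I will try to choose them built from the $u_\sigma^{(i)}$ and $v_{\sigma^{-1}}^{(i)}$ via the isomorphism $R_\sigma\otimes_{R_e}R_{\sigma^{-1}}\cong R_f$ coming from strong grading, so that the double sum telescopes.

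If that fails, I will fall back on proving surjectivity and injectivity separately. For surjectivity, strong grading gives $R_{\sigma\delta\sigma^{-1}}=R_\sigma R_\delta R_{\sigma^{-1}}$. Using $1_f=\sum u_\sigma^{(i)}v_{\sigma^{-1}}^{(i)}$ on both sides of an element $y\in R_{\Delta(f)}$, I would try to write $y=\gamma_\sigma(z)$ with $z=\gamma_{\sigma^{-1}}(y)$. For injectivity, I would apply $\gamma_{\sigma^{-1}}$ as above.

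\emph{Step 3 ($\gamma_\sigma''$).} Let $x\in Z(R_{\Delta(e)})$. For $a\in R_\sigma$, each $v_{\sigma^{-1}}^{(i)}a$ lies in $R_e\subseteq R_{\Delta(e)}$, so the computation of Lemma \ref{lem: a gamma_tau(x)} goes through using only commutation with $R_{\Delta(e)}$. This yields $ax=\gamma_\sigma(x)a$. Conjugation by $\sigma$ then shows that $\gamma_\sigma(x)$ commutes with $R_\sigma R_{\Delta(e)} R_{\sigma^{-1}}=R_{\Delta(f)}$, so $\gamma_\sigma(x)\in Z(R_{\Delta(f)})$.

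For multiplicativity, write
\[
\gamma_\sigma(x)\gamma_\sigma(y)=\sum_{i,k}u_\sigma^{(i)}x\,v_{\sigma^{-1}}^{(i)}u_\sigma^{(k)}\,y\,v_{\sigma^{-1}}^{(k)}.
\]
Since $v_{\sigma^{-1}}^{(i)}u_\sigma^{(k)}\in R_e$, the element $x$ can be moved past it, and the sum collapses to $\gamma_\sigma(xy)$. Also $\gamma_\sigma(1_e)=1_f$. Bijectivity follows because here Lemma \ref{lem:t=gamma_sigma(s),s=gamma_sigma-1(t)} applies: from $a x=\gamma_\sigma(x) a$ for all $a\in R_\sigma$ it gives $x=\gamma_{\sigma^{-1}}(\gamma_\sigma(x))$, and symmetrically for $\sigma^{-1}$.

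\emph{Step 4 ($\gamma_\sigma'''$).} Since $C_{R_e}(R_{\Delta(e)})=Z(R_{\Delta(e)})\cap R_e$, and $\gamma_\sigma$ maps $R_e$ into $R_f$ (and $\gamma_{\sigma^{-1}}$ maps $R_f$ into $R_e$), the isomorphism $\gamma_\sigma''$ restricts to a ring isomorphism $C_{R_e}(R_{\Delta(e)})\to C_{R_f}(R_{\Delta(f)})$.
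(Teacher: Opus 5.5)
\textbf{Step 2 is a genuine gap, and it cannot be closed.} Your Steps 1, 3 and 4 are correct, but Step 2 is left as a plan, and no argument can complete it. For an arbitrary admissible choice of the elements in Definition \ref{def:nuv}, $\gamma_\sigma$ is in general neither injective nor surjective on $R_{\Delta(e)}$. So the first assertion of the statement is false as stated.

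Here is a counterexample. Let $k$ be a field, $G=\{1,g\}$ a group of order two, and $R=M_2(k)[G]$, viewed as graded by a one-object groupoid; it is strongly graded and object unital. Take $\Delta=\{1\}$ or $\Delta=G$; both are normal. With matrix units $E_{ij}$, the choice $n_g=2$, $u_g^{(i)}=v_{g^{-1}}^{(i)}=E_{ii}g$ is admissible, since $\sum_i E_{ii}g\,E_{ii}g=E_{11}+E_{22}=1$. Because $g$ is central and $g^2=1$, we get $\gamma_g(x)=E_{11}xE_{11}+E_{22}xE_{22}$. Hence $\gamma_g(E_{12})=0$, and $\gamma_g(M_2(k))$ consists only of diagonal matrices.

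Consequently neither of your routes works. No choice of elements for $\sigma^{-1}$ can give a left inverse of a non-injective map; here $g=g^{-1}$, so there is not even an independent choice. Your fallback again rests on $\gamma_{\sigma^{-1}}\circ\gamma_\sigma=\mathrm{id}$ and $\gamma_\sigma\circ\gamma_{\sigma^{-1}}=\mathrm{id}$, which fail. What holds in general is only your Step 1 inclusion $\gamma_\sigma(R_{\Delta(e)})\subseteq R_{\Delta(f)}$. Bijectivity of $\gamma'_\sigma$ needs a special choice, for example $n_\sigma=1$ with $v_{\sigma^{-1}}^{(1)}u_\sigma^{(1)}=1_e$, as for object crossed products. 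In that case $y\mapsto v_{\sigma^{-1}}^{(1)}y\,u_\sigma^{(1)}$ is an inverse, by normality.

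\textbf{Comparison with the paper.} Your diagnosis is exactly the flaw in the paper's own proof. The paper obtains bijectivity of $\gamma'_\sigma$ by quoting Lemma \ref{lem: gamma_sigma gamma_tau = gamma_sigmatau}, but that lemma needs $x\in C_R(R_0)$. The first assertion should therefore be weakened to the inclusion.

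For the remaining assertions your argument is sound and close to the paper's. Two remarks:
\begin{enumerate}
\item Your ``conjugation by $\sigma$'' step also needs $c\,\gamma_\sigma(x)=xc$ for $c\in R_{\sigma^{-1}}$. This follows in the same way, using $c\,u_\sigma^{(i)}\in R_e$. The paper instead checks $\gamma_\sigma(x)y=y\gamma_\sigma(x)$ for $y\in R_{\Delta(f)}$ directly, by inserting $1_f=\sum_j u_\sigma^{(j)}v_{\sigma^{-1}}^{(j)}$ to the right of $y$.
\item For bijectivity of $\gamma''_\sigma$, the paper's citation of Lemma \ref{lem: gamma_sigma gamma_tau = gamma_sigmatau} is legitimate, because $Z(R_{\Delta(e)})\subseteq C_R(R_0)$. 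Indeed, $R_{\Delta(e)}$ annihilates and is annihilated by $R_{f'}$ for $f'\neq e$, and $R_e\subseteq R_{\Delta(e)}$. Your use of Lemma \ref{lem:t=gamma_sigma(s),s=gamma_sigma-1(t)}, via $ax=\gamma_\sigma(x)a$, is an equally valid alternative that does not need this observation.
\end{enumerate}
Your Steps 3 and 4 deliver everything that is used later, in the proof of Theorem \ref{prop: separability R over R_Delta, normal case}.
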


\begin{proof}
Since $\Delta$ is a
normal subgroupoid of $\Gamma$, it follows that
$\sigma \Delta(e) \sigma^{-1} \subseteq \Delta(f)$.
Hence $\gamma_\sigma'( R_{\Delta(e)} ) = \sum_{i=1}^{n_\sigma}u_\sigma^{(i)} R_{\Delta(e)} v_{\sigma^{-1}}^{(i)} \subseteq 
R_\sigma R_{\Delta(e)} R_{\sigma^{-1}}
\subseteq 
R_{\sigma \Delta(e) \sigma^{-1}} \subseteq 
R_{\Delta(f)}$.
From Lemma~\ref{lem: gamma_sigma gamma_tau = gamma_sigmatau}, it follows that $\gamma_\sigma'$
is bijective with inverse $\gamma_{\sigma^{-1}}'$.
Take $x \in Z(R_{\Delta(e)})$ and 
$y \in Z( R_{\Delta(f)} )$. Then
\[
\gamma_\sigma''(x)y
= \sum_{i=1}^{n_\sigma} u_\sigma^{(i)} x v_{\sigma^{-1}}^{(i)} y = \sum_{i=1}^{n_\sigma}u_\sigma^{(i)} 
x v_{\sigma^{-1}}^{(i)} y 1_f = 
\sum_{i,j=1}^{n_\sigma}u_\sigma^{(i)} 
x v_{\sigma^{-1}}^{(i)} y u_\sigma^{(j)} v_{\sigma^{-1}}^{(j)}.
\]
Since $\Delta$ is a
normal subgroupoid of $\Gamma$, it follows that
$\sigma^{-1} \Delta(f) \sigma \subseteq \Delta(e)$.
Therefore,
$v_{\sigma^{-1}}^{(i)} y u_\sigma^{(j)}
\in R_{\sigma^{-1}} R_{\Delta(f)}
R_{\sigma} \subseteq R_{\sigma^{-1} \Delta(f)
\sigma} \subseteq 
R_{\Delta(e)}$, and the last sum equals
\[
\sum_{i,j=1}^{n_\sigma}u_\sigma^{(i)} 
v_{\sigma^{-1}}^{(i)} y u_\sigma^{(j)} x
v_{\sigma^{-1}}^{(j)} = 
\sum_{j=1}^{n_\sigma} y u_\sigma^{(j)} x v_{\sigma^{-1}}^{(j)} = y \gamma_\sigma''(x).\]
Thus $\gamma_\sigma'' ( Z(R_{\Delta(e)} )) 
\subseteq Z( R_{\Delta(f)} )$. 
By Lemma~\ref{lem: gamma_sigma gamma_tau = gamma_sigmatau}, $\gamma_\sigma''$
is bijective with inverse $\gamma_{\sigma^{-1}}''$.
Now we show that $\gamma_\sigma''$ respects
multiplication.
Take $x,x' \in Z(R_{\Delta(e)})$. Then
\[
\gamma_{\sigma}''(x x') = 
\sum_{i=1}^{n_\sigma} u_\sigma^{(i)} x x'
v_{\sigma^{-1}}^{(i)} = 
\sum_{i=1}^{n_\sigma} 1_f u_\sigma^{(i)} x x'
v_{\sigma^{-1}}^{(i)} 
= \sum_{i=1}^{n_\sigma} \sum_{j=1}^{n_\sigma}
u_\sigma^{(j)} v_{\sigma^{-1}}^{(j)} 
u_\sigma^{(i)} x x' v_{\sigma^{-1}}^{(i)}.
\]
Since $x \in Z(R_{\Delta(e)})$ and $v_{\sigma^{-1}}^{(j)} u_\sigma^{(i)} \in 
R_{\Delta(e)}$, the last sum equals
\[
\sum_{i=1}^{n_\sigma} \sum_{j=1}^{n_\sigma}
u_\sigma^{(j)} x v_{\sigma^{-1}}^{(j)} 
u_\sigma^{(i)} x' v_{\sigma^{-1}}^{(i)} 
= \sum_{j=1}^{n_\sigma} u_\sigma^{(j)} x 
v_{\sigma^{-1}}^{(j)} \sum_{i=1}^{n_\sigma} 
u_\sigma^{(i)} x' v_{\sigma^{-1}}^{(i)} 
= \gamma_\sigma''(x) \gamma_\sigma''(x').
\]
Also $\gamma_\sigma''(1_e) = 
\sum_{i=1}^{n_\sigma} u_\sigma^{(i)} 1_e
v_{\sigma^{-1}}^{(i)} = 
\sum_{i=1}^{n_\sigma} u_\sigma^{(i)} 
v_{\sigma^{-1}}^{(i)} = 1_f$.
Finally, if
$x \in C_{R_e}(R_{\Delta(e)})$,
then $x \in Z(R_{\Delta(e)})$ and
$\gamma_\sigma(x)\in R_f$.
Since $\gamma_\sigma''(x)\in Z(R_{\Delta(f)})$, it follows that
$\gamma_\sigma''' :
C_{R_e}(R_{\Delta(e)})
\to
C_{R_f}(R_{\Delta(f)})$
is a ring isomorphism.
\end{proof}

\begin{theorem}
\label{prop: separability R over R_Delta, normal case}
The following assertions are equivalent:
\begin{enumerate}[\rm (a)]

\item The functor 
${\rm Res}_{R/{R_\Delta}} :
{}_R {\rm UMod} \to 
{}_{R_\Delta} {\rm UMod}$
is separable.
        
\item The ring extension $R/R_\Delta$ is separable.
        
\item For each $e\in\Gamma_0$, we have  
$[\Gamma(e):\Delta(e)] < \infty$ and 
there exists $r \in C_{R_e}(R_{\Delta(e)})$ 
such that
${\rm tr}^e_{\Gamma(e)/\Delta(e)}(r)=1_e$.
        
\item For each $e\in\Gamma_0$, the ring extension 
$R_{\Gamma(e)} / R_{\Delta(e)}$ is separable.

\item For each $e \in \Gamma_0$,
${\rm Res}_{R_{\Gamma(e)}/{R_{\Delta(e)}}} :
{}_{R_{\Gamma(e)}} {\rm UMod} \to 
{}_{R_{\Delta(e)}} {\rm UMod}$
is separable.

\end{enumerate}
\end{theorem}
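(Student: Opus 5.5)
(a)$\Leftrightarrow$(b) and (d)$\Leftrightarrow$(e) are immediate from Theorem~\ref{thm:sepidempotent}(a), applied to $R/R_\Delta$ and to the unital extensions $R_{\Gamma(e)}/R_{\Delta(e)}$ (common identity $1_e$, where Theorem~\ref{thm:oldsep}(a) also applies). Corollary~\ref{cor:miyashita}, applied to the unital ring $R_{\Gamma(e)}$ strongly graded by the group $\Gamma(e)$ with subgroup $\Delta(e)$, gives (c)$\Leftrightarrow$(d) for each fixed $e$. So it remains to connect (b) with (c)/(d).

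(d)$\Rightarrow$(b) is Corollary~\ref{prop: sufficent conditions for separability}: separability of $R_{\Gamma(e)}/R_{\Delta(e)}$ for a single $e$ already forces $R/R_\Delta$ to be separable.

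The substantive direction is (b)$\Rightarrow$(c). By Theorem~\ref{prop: separability R over R_Delta}(c), for each $e$ there is a finite $F_e\subseteq\Gamma_0^{\rm fin}$ and $r^{e,f}\in C_{R_f}(R_{\Delta(f)})$, $f\in F_e$, with $\sum_{f\in F_e}\sum_{\tau\in T_{f,e}}\gamma_\tau(r^{e,f})=1_e$. First, normality gives finiteness everywhere: for $f\in\Gamma_0^{\rm fin}$, Lemma~\ref{lem: |T_e,e|=|T_f,e|} yields $|\Gamma(e)/\Delta(e)|=|T_{f,e}|=[\Gamma(f):\Delta(f)]<\infty$ (also $\Gamma_0^{\rm fin}\neq\emptyset$ since $F_e\neq\emptyset$), so $\Gamma_0^{\rm fin}=\Gamma_0$. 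Next, fix $f\in F_e$ and some $\tau_0\in T_{f,e}$. By Lemma~\ref{lem: |T_e,e|=|T_f,e|}, $T_{f,e}=\{\tau_0^\sigma : \overline\sigma\in\Gamma(e)/\Delta(e)\}$, and $\tau_0^\sigma$ is right $\Lambda$-equivalent to $\sigma\tau_0$. Hence by Lemmas~\ref{lem: gamma_sigma=gamma_tau} and \ref{lem: gamma_sigma gamma_tau = gamma_sigmatau},
\[
\sum_{\tau\in T_{f,e}}\gamma_\tau(r^{e,f})=\sum_{\sigma\in S_e}\gamma_\sigma\bigl(\gamma_{\tau_0}(r^{e,f})\bigr),
\]
where $S_e$ is a left transversal of $\Delta(e)$ in $\Gamma(e)$. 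Lemma~\ref{lem: gamma_sigma restricts to} gives $s^{f}:=\gamma_{\tau_0}(r^{e,f})\in C_{R_e}(R_{\Delta(e)})$. Putting $r:=\sum_{f\in F_e}s^f\in C_{R_e}(R_{\Delta(e)})$, we get ${\rm tr}^e_{\Gamma(e)/\Delta(e)}(r)=\sum_{\sigma\in S_e}\gamma_\sigma(r)=1_e$, which is (c).

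The main obstacle is the reindexing step. One needs $\gamma_\sigma(\gamma_{\tau_0}(x))=\gamma_{\sigma\tau_0}(x)=\gamma_{\tau_0^\sigma}(x)$, which requires $x\in C_R(R_\Lambda)$ (and $C_R(R_0)$). Also, the trace over $\Gamma(e)/\Delta(e)$ must be independent of the chosen transversal; this needs $s^f$ to commute with $R_{\Delta(e)}$, which again uses Lemma~\ref{lem: gamma_sigma restricts to} together with Lemma~\ref{lem: gamma_sigma=gamma_tau}. Both are exactly where normality enters.
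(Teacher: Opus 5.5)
Your proof is correct and is essentially the paper's argument. For (b)$\Rightarrow$(c) you take the data from Theorem~\ref{prop: separability R over R_Delta}, get $[\Gamma(e):\Delta(e)]<\infty$ from the normal-case bijection $\Gamma(e)/\Delta(e)\to T_{f,e}$, transport each component into $C_{R_e}(R_{\Delta(e)})$ via $\gamma_{\tau_0}$, and reindex the trace exactly as the paper does; you then close the cycle with Corollary~\ref{cor:miyashita} and the sufficient-condition corollary, and the differences are only cosmetic: you work with the componentwise elements $r^{e,f}$ rather than a single $s\in C_{R_0}(R_\Lambda)$, and you use Corollary~\ref{cor:miyashita} in both directions. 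One small correction to your closing comment: normality is used only in the coset bijection and in the fact that $\gamma_{\tau_0}$ maps $C_{R_f}(R_{\Delta(f)})$ into $C_{R_e}(R_{\Delta(e)})$. It is not needed for the identity $\gamma_\sigma\gamma_{\tau_0}=\gamma_{\sigma\tau_0}=\gamma_{\tau_0^\sigma}$ on $C_R(R_\Lambda)$.
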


\begin{proof}
The equivalences
(a)$\Leftrightarrow$(b) and 
(d)$\Leftrightarrow$(e) follow from
Theorem
\ref{prop: separability R over R_Delta}.

(b)$\Rightarrow$(c):
Suppose that $R/R_\Delta$ is separable. 
Take $e\in \Gamma_0$.
By Theorem~\ref{prop: separability R over R_Delta},
there exist $n \in \N$,
$f_1,...,f_n \in \Gamma_0$ and 
$s \in C_{R_0}( R_\Lambda )$ such that
$|T_{f_i,e}|<\infty$, for
$i=1,...,n$, and $1_e=\sum_{i=1}^n\sum_{\tau\in T_{f_i,e}}\gamma_\tau(s)$. 
By Lemma~\ref{lem: |T_e,e|=|T_f,e|}, 
$[\Gamma(e):\Delta(e)] = 
| T_{f_i,e} | < \infty$.
For each $i=1,...,n$, fix $\tau_i\in T_{f_i,e}$. 
Put $r := \sum_{i=1}^n\gamma_{\tau_i}(s)$. 
By Lemma~\ref{lem: gamma_sigma restricts to}, 
$r \in C_{R_e}( R_{\Delta(e)} )$. We have
\[
\tr_{\Gamma(e)/\Delta(e)}(r) = 
\sum_{i=1}^n \tr_{\Gamma(e)/\Delta(e)}
(\gamma_{\tau_i}(s)) = 
\sum_{i=1}^n \sum_{\overline{\sigma}\in \Gamma(e)/\Delta(e)} \gamma_\sigma(\gamma_{\tau_i}(s)).
\]
By Lemmas \ref{lem: gamma_sigma gamma_tau = gamma_sigmatau}, 
\ref{lem: gamma_sigma=gamma_tau} and 
\ref{lem: |T_e,e|=|T_f,e|}, 
the last sum equals
\[
\sum_{i=1}^n \sum_{\overline{\sigma} \in 
\Gamma(e)/\Delta(e)} \gamma_{\sigma\tau_i}(s) = 
\sum_{i=1}^n \sum_{\overline{\sigma} \in 
\Gamma(e)/\Delta(e)}
\gamma_{\tau_i^\sigma}(s) = 
\sum_{i=1}^n 
\sum_{\tau \in T_{f_i,e}}\gamma_{\tau}(s) 
= 1_e.
\]

(c)$\Rightarrow$(d):
This follows from 
Corollary \ref{cor:miyashita}.

(d)$\Rightarrow$(b):
This follows from 
Corollary 
\ref{prop: sufficent conditions for separability}.
\end{proof}

\section{Separability: object crossed products}\label{sec:objectcrossedproducts}

In this section, we
apply our previous results
to the context of object crossed products.
For a unital ring $A$,
we let $U(A)$ denote the group of
multiplicative units of $A$.
We first recall some notions from
\cite{CLP}.

By an \emph{object crossed system}
we mean a quadruple $(A,\Gamma,\alpha,\beta)$ where
$\Gamma$ is a groupoid,
$A = \left( A_e \right)_{e\in\Gamma_0}$
is a family of unital rings,
$\alpha = \left(
\alpha_\sigma : A_{d(\sigma)}\to A_{r(\sigma)}
\right)_{\sigma\in\Gamma}$
is a family of unital ring isomorphisms, and
$\beta = \left(
\beta_{\sigma,\tau}
\in
U(A_{r(\sigma)})
\right)_{(\sigma,\tau) \in \Gamma_2}$
is a family of invertible elements,
satisfying the following four conditions:
\begin{itemize}

\item[(O1)]
$\alpha_e=\id_{A_e}$ for all $e\in\Gamma_0$;

\item[(O2)]
$\beta_{\sigma,d(\sigma)}=\beta_{r(\sigma),\sigma}= 1_{A_{r(\sigma)}}$
for all $\sigma\in\Gamma$;

\item[(O3)]
$\alpha_\sigma(\alpha_\tau(a))=
\beta_{\sigma,\tau}\alpha_{\sigma\tau}(a)\beta_{\sigma,\tau}^{-1}$
for all $(\sigma,\tau)\in\Gamma_2$ and $a\in A_{d(\tau)}$;

\item[(O4)]
$\beta_{\sigma,\tau}\beta_{\sigma\tau,\rho}=
\alpha_\sigma(\beta_{\tau,\rho})\beta_{\sigma,\tau\rho}$
for all $(\sigma,\tau),(\tau,\rho)\in\Gamma_2$.

\end{itemize}
Let $(A,\Gamma,\alpha,\beta)$ be an
object crossed system.
Let $\{ u_\sigma \}_{\sigma\in\Gamma}$
be a copy of $\Gamma$.
By the \emph{object crossed product}
$A \rtimes^\alpha_\beta \Gamma$ defined by
$(A,\Gamma,\alpha,\beta)$
we mean the set of
formal sums $\sum_{\sigma \in \Gamma}
a_\sigma u_\sigma$,
where $a_\sigma \in A_{r(\sigma)}$
for each $\sigma \in \Gamma$, and
$a_\sigma = 0$ for all but finitely many
$\sigma \in \Gamma$. If
$\sum_{\sigma \in \Gamma} a_\sigma 
u_\sigma$
and
$\sum_{\sigma \in \Gamma} a_\sigma' 
u_\sigma$
are two such sums, we define
\[
\sum_{\sigma \in \Gamma} a_\sigma u_\sigma
+
\sum_{\sigma \in \Gamma} a_\sigma' u_\sigma
:=
\sum_{\sigma \in \Gamma}
(a_\sigma + a_\sigma') u_\sigma.
\]
The product of two such sums
is defined to be
the additive extension of
\[
\left( a_\sigma u_\sigma \right) \cdot
\left( a_{\tau}' u_{\tau} \right) :=
a_\sigma \alpha_\sigma ( a_{\tau}' )
\beta_{\sigma,\tau} u_{\sigma \tau},
\]
when $(\sigma,\tau) \in \Gamma_2$, and
$a_\sigma u_\sigma \cdot a_{\tau}' 
u_{\tau} := 0$, otherwise.
By \cite[Prop.~16]{CLP},
$A\rtimes^\alpha_\beta\Gamma$ is an object
unital strongly $\Gamma$-graded ring if,
for each $\sigma \in \Gamma$, we set
$(A\rtimes^\alpha_\beta\Gamma)_\sigma :=
A_{r(\sigma)}u_\sigma$.
We distinguish the following subclasses
of object crossed products.

\begin{itemize}

\item
If $\beta$ is \emph{trivial}, that is, if
$\beta_{\sigma,\tau} = 1_{A_{r(\sigma)}}$
for all $(\sigma,\tau) \in \Gamma_2$,
then we say that the corresponding object crossed
product is an \emph{object skew
groupoid ring}, and we
denote it by $A \rtimes^{\alpha} \Gamma$.

\item
If $\alpha$ is \emph{trivial}, that is, if
all the rings $A_e$, for $e \in \Gamma_0$,
coincide with the same ring $B$, and for all
$\sigma \in \Gamma$,
the map $\alpha_\sigma : B \to B$ is
the identity,
then we say that the corresponding
object crossed product is an
\emph{object twisted groupoid ring},
and we denote it by
$B \rtimes_{\beta} \Gamma$.

\item
If \emph{both} $\alpha$ and $\beta$
are trivial, then the corresponding
object crossed product is the \emph{groupoid ring}
of $\Gamma$ over $B$, and we denote
it by $B[\Gamma]$.

\end{itemize}
For the rest of this section,
we make the following assumptions:
\begin{itemize}

\item $(A,\Gamma,\alpha,\beta)$ denotes a
fixed object crossed product system
with $\Gamma$ connected;

\item $R :=
A \rtimes^\alpha_\beta \Gamma$
is the object crossed product
defined by $(A,\Gamma,\alpha,\beta)$;

\item $\Delta$ denotes a wide
subgroupoid of $\Gamma$, and
we set
$\Lambda := {\rm Iso}(\Delta)
= \cup_{e \in \Gamma_0} \Delta(e)$;

\item
$R_\Delta :=
A \rtimes^{\alpha_\Delta}_{\beta_\Delta}
\Delta$
is the object crossed product defined
by the object crossed system
$(A,\Delta,\alpha_\Delta,\beta_\Delta)$,
where
$\alpha_\Delta =
( \alpha_\sigma )_{\sigma \in \Delta}$
and $\beta_\Delta =
( \beta_{\sigma,\tau} )_{(\sigma,\tau) \in \Delta_2}$.

\end{itemize}

We now study separability of
$R/R_\Delta$ using Theorem
\ref{prop: separability R over R_Delta}.
To this end, take
$\sigma \in \Gamma$. By (O1) and (O4)
it follows that
$\alpha_\sigma(\beta_{\sigma^{-1},\sigma})
= \beta_{\sigma,\sigma^{-1}}$.
Thus
$1_{r(\sigma)}u_{r(\sigma)}=(1_{r(\sigma)}u_{\sigma}) \cdot 
(\beta_{\sigma^{-1},\sigma}^{-1}u_{\sigma^{-1}})$.
Hence, using the notation from
Definition \ref{def:nuv}, we can put
$n_\sigma:=1$, 
$u_\sigma^{(1)}:=1_{A_r(\sigma)}u_{\sigma}$ 
and 
$v_{\sigma^{-1}}^{(1)}:=\beta_{\sigma^{-1},\sigma}^{-1}u_{\sigma^{-1}}$. Then,
for any $a \in A_{d(\sigma)}$, we have
\begin{equation}\label{eq:crossedaction}
\begin{array}{rcl}
\gamma_\sigma(au_{d(\sigma)}) &=&
(1_{A_r(\sigma)}u_{\sigma})(au_{d(\sigma)})(\beta_{\sigma^{-1},\sigma}^{-1}u_{\sigma^{-1}}) 
\\
&=&
(\alpha_\sigma(a)u_{\sigma})(\beta_{\sigma^{-1},\sigma}^{-1}u_{\sigma^{-1}})
\\
&=&
\alpha_\sigma(a) 
\alpha_\sigma( \beta_{\sigma^{-1},\sigma} )
u_{r(\sigma)} \\
&=& \alpha_\sigma(a)u_{r(\sigma)}.
\end{array}
\end{equation}
If $e \in \Gamma_0$, and 
$X \subseteq A_e$, we put 
$X^{\Delta(e)} := \{ x \in X \mid  
\alpha_\sigma(x) = x
\text{ for all } \sigma \in \Delta(e) \}$.

\begin{lemma}\label{lem:fixed}
Take $e \in \Gamma_0$. Then
$C_{R_e}( R_{\Delta(e)} ) = 
\left( Z(A_e)^{\Delta(e)} \right) u_e$.
\end{lemma}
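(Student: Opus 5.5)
We prove both inclusions by direct computation in the object crossed product, using $R_e = A_e u_e$ and $R_{\Delta(e)} = \bigoplus_{\sigma \in \Delta(e)} A_e u_\sigma$. Note that $r(\sigma)=d(\sigma)=e$ for $\sigma\in\Delta(e)$. Take $x = a u_e \in R_e$ with $a \in A_e$. By (O1) and (O2), multiplication in $R_e$ is
\[
(a u_e)(b u_e) = a \alpha_e(b) \beta_{e,e} u_e = ab\, u_e ,
\]
so $R_e \cong A_e$ as rings. For $\sigma \in \Delta(e)$ and $b \in A_e$, the definition of the product together with (O1), (O2) gives
\[
(a u_e)(b u_\sigma) = a b\, u_\sigma, \qquad (b u_\sigma)(a u_e) = b\, \alpha_\sigma(a)\, u_\sigma .
\]
Hence $x$ commutes with $b u_\sigma$ if and only if $ab = b\,\alpha_\sigma(a)$ in $A_e$. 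Since $R_{\Delta(e)}$ is additively spanned by such elements, $x \in C_{R_e}(R_{\Delta(e)})$ if and only if $ab = b\,\alpha_\sigma(a)$ for all $\sigma \in \Delta(e)$ and all $b \in A_e$.

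For the inclusion ``$\subseteq$'', take $\sigma = e \in \Delta(e)$, which is allowed because $\Delta$ is wide. Since $\alpha_e = \id$, the condition reads $ab = ba$ for all $b$, so $a \in Z(A_e)$. Next take arbitrary $\sigma \in \Delta(e)$ and $b = 1_{A_e}$; the condition gives $a = \alpha_\sigma(a)$. Thus $a \in Z(A_e)^{\Delta(e)}$.

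For the inclusion ``$\supseteq$'', let $a \in Z(A_e)^{\Delta(e)}$. Then for every $\sigma \in \Delta(e)$ and $b \in A_e$ we have $b\,\alpha_\sigma(a) = ba = ab$, so $a u_e$ commutes with every generator $b u_\sigma$ of $R_{\Delta(e)}$. By additivity it commutes with all of $R_{\Delta(e)}$, and it lies in $R_e$ by construction.

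The only point requiring care is the multiplication formula with $u_e$ on the right, where $\beta_{\sigma,e} = 1$ is needed; this holds by (O2) because $d(\sigma) = e$.
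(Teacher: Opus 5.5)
Your proof is correct and follows essentially the same route as the paper's. Both compute the products $(au_e)(bu_\sigma)$ and $(bu_\sigma)(au_e)$ using (O1)--(O2), specialize to $\sigma=e$ to get centrality and to $b=1_{A_e}$ to get $\Delta(e)$-invariance, and prove the reverse inclusion by the same computation. The only difference is presentational: you first package everything into the single criterion $ab=b\,\alpha_\sigma(a)$ for all $\sigma\in\Delta(e)$ and $b\in A_e$.
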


\begin{proof}
Put
$I := C_{R_e}(R_{\Delta(e)})$ and
$J := \left( Z(A_e)^{\Delta(e)} \right) 
u_e$.

First we show that $I \subseteq J$.
Take $a u_e \in I$ for some $a \in A_e$.
Take $b \in A_e$. Since
$R_e \subseteq R_{\Delta(e)}$, we have
$I \subseteq Z(R_e)$. Therefore
$(a u_e)(b u_e) = (b u_e)(a u_e)$,
so $(ab)u_e = (ba)u_e$, 
and hence $ab = ba$. Thus $a \in Z(A_e)$.
Now take $\sigma \in \Delta(e)$.
Then $1_{A_e}u_\sigma \in R_{\Delta(e)}$, and therefore we get
$(a u_e)(1_{A_e} u_\sigma) =
(1_{A_e} u_\sigma)(a u_e)$.
This implies that
$a u_\sigma = \alpha_\sigma(a) u_\sigma$,
and hence $a = \alpha_\sigma(a)$.
Thus $a u_e \in J$.

Now we show that $J \subseteq I$.
To this end, take $c u_e \in J$ for some
$c \in Z(A_e)^{\Delta(e)}$.
Take $d u_\sigma \in R_{\Delta(e)}$ for
some $d \in A_e$ and $\sigma \in \Delta(e)$.
Then
$(d u_\sigma)(c u_e) =
d \alpha_\sigma(c) u_\sigma =
d c u_\sigma =
c d u_\sigma =
(c u_e)(d u_\sigma)$.
Hence $c u_e \in I$.
This proves the claim.
\end{proof}

\begin{lemma}\label{lem:welldefinedtr}
Let $e,f \in \Gamma_0$ and suppose that
$\tau_1,\tau_2 \in \Gamma(f,e)$ are right
$\Lambda$-equivalent. 
Take $a \in Z(A_f)^{\Delta(f)}$.
Then $\alpha_{\tau_1}(a) = 
\alpha_{\tau_2}(a)$.
\end{lemma}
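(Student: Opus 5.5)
Two routes are available: the abstract one through Lemma~\ref{lem: gamma_sigma=gamma_tau} and \eqref{eq:crossedaction}, and a direct one through (O3). I will use the direct computation with (O3) as the main argument and keep the abstract route as a cross-check.

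Since $\tau_1$ and $\tau_2$ are right $\Lambda$-equivalent with common domain $f$, there is $\delta := \tau_1^{-1}\tau_2 \in \Delta(f)$, so $\tau_2 = \tau_1 \delta$. Note that $(\tau_1,\delta) \in \Gamma_2$, since $d(\tau_1) = f = r(\delta)$. Applying (O3) to the pair $(\tau_1,\delta)$ and $a \in A_f = A_{d(\delta)}$ gives
\[
\alpha_{\tau_1}(\alpha_\delta(a)) = \beta_{\tau_1,\delta}\,\alpha_{\tau_1\delta}(a)\,\beta_{\tau_1,\delta}^{-1}.
\]
Because $a \in Z(A_f)^{\Delta(f)}$ and $\delta \in \Delta(f)$, we have $\alpha_\delta(a) = a$. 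Hence the left-hand side is $\alpha_{\tau_1}(a)$, and we get
\[
\alpha_{\tau_1}(a) = \beta_{\tau_1,\delta}\,\alpha_{\tau_2}(a)\,\beta_{\tau_1,\delta}^{-1}.
\]

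It remains to remove the conjugation by $\beta_{\tau_1,\delta} \in U(A_e)$. Since $\alpha_{\tau_2} : A_f \to A_e$ is a ring isomorphism and $a \in Z(A_f)$, its image $\alpha_{\tau_2}(a)$ lies in $Z(A_e)$. It therefore commutes with $\beta_{\tau_1,\delta}$, and the conjugation is trivial. This gives $\alpha_{\tau_1}(a) = \alpha_{\tau_2}(a)$.

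As a cross-check, $a u_f \in C_{R_f}(R_{\Delta(f)}) \subseteq C_R(R_\Lambda)$ by Lemma~\ref{lem:fixed}. Lemma~\ref{lem: gamma_sigma=gamma_tau} then gives $\gamma_{\tau_1}(a u_f) = \gamma_{\tau_2}(a u_f)$. By \eqref{eq:crossedaction} this reads $\alpha_{\tau_1}(a)u_e = \alpha_{\tau_2}(a)u_e$, and comparing coefficients in the homogeneous component $R_e = A_e u_e$ yields the claim again.

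The only delicate point is the one just handled, namely discarding the cocycle conjugation, which is exactly where centrality of $a$ is used.
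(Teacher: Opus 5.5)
Your proof is correct and follows the paper's argument. You write one of the two morphisms as the other composed with an element of $\Delta(f)$, apply (O3) together with $\alpha_\delta(a)=a$, and then discard the conjugation by the unit $\beta$ because $\alpha_{\tau_2}(a)\in Z(A_e)$. The paper does the same, only with the roles of $\tau_1$ and $\tau_2$ swapped; your cross-check via Lemma~\ref{lem: gamma_sigma=gamma_tau} and \eqref{eq:crossedaction} is also valid (not circular), though not needed.
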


\begin{proof}
Since $\tau_1$ and $\tau_2$ are right
$\Lambda$-equivalent, there is
$\sigma \in \Delta(f)$ such that
$\tau_1 = \tau_2 \sigma$.
Since
$\alpha_{\tau_2} : A_f \to A_e$
is a ring isomorphism, it restricts to
a ring isomorphism
$Z(A_f) \to Z(A_e)$.
Moreover, from
$a \in Z(A_f)^{\Delta(f)}$,
we get $\alpha_\sigma(a) = a$.
Thus, by (O3),
$\alpha_{\tau_1}(a)
=
\alpha_{\tau_2 \sigma}(a)
=
\beta_{\tau_2,\sigma}^{-1}
\alpha_{\tau_2}(\alpha_\sigma(a))
\beta_{\tau_2,\sigma}
=
\beta_{\tau_2,\sigma}^{-1}
\alpha_{\tau_2}(a)
\beta_{\tau_2,\sigma}$.
Since $\alpha_{\tau_2}(a) \in Z(A_e)$ and
$\beta_{\tau_2,\sigma} \in U(A_e)$, we get
$\beta_{\tau_2,\sigma}^{-1}
\alpha_{\tau_2}(a)
\beta_{\tau_2,\sigma}
=
\alpha_{\tau_2}(a)$.
Hence
$\alpha_{\tau_1}(a)=\alpha_{\tau_2}(a)$.
\end{proof}

\begin{definition}
Take $e,f \in \Gamma_0$.
Then we define
${\rm tr}_{\Gamma/\Delta}^{f,e} :
Z(A_f)^{\Delta(f)} \to
Z(A_e)^{\Delta(e)}$
by
${\rm tr}_{\Gamma/\Delta}^{f,e}(a) =
\sum_{\tau \in T_{f,e}}
\alpha_\tau(a)$,
for $a \in Z(A_f)^{\Delta(f)}$.
\end{definition}

\begin{proposition}
Take $e,f \in \Gamma_0$.
Then ${\rm tr}_{\Gamma/\Delta}^{f,e}$
is well defined.
\end{proposition}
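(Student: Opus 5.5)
To see that ${\rm tr}_{\Gamma/\Delta}^{f,e}$ is well defined, I have to check three things. The sum $\sum_{\tau \in T_{f,e}} \alpha_\tau(a)$ must be finite, or at least meaningful. It must land in $Z(A_e)^{\Delta(e)}$. And it must not depend on the choice of the representative set $T$, which is the real content of ``well defined'' here.

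\emph{Finiteness and independence of $T$.} By Lemma~\ref{lem:|T_f,e'|=|T_f,e|}, $|T_{f,e}| = |T_f| = [\Gamma(f):\Delta(f)]$. Implicitly, the definition is to be read for $f \in \Gamma_0^{\rm fin}$, or else $T_{f,e}$ is finite by that lemma whenever the index is finite; I will make this explicit.

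Independence of the choice of representatives follows from Lemma~\ref{lem:welldefinedtr}. If $\tau_1,\tau_2 \in \Gamma(f,e)$ are right $\Lambda$-equivalent, then $\alpha_{\tau_1}(a) = \alpha_{\tau_2}(a)$ for $a \in Z(A_f)^{\Delta(f)}$. So each summand depends only on the class $\tau\Lambda$, and replacing $T$ by another set of representatives merely permutes equal terms.

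\emph{The sum lands in $Z(A_e)^{\Delta(e)}$.} Each $\alpha_\tau : A_f \to A_e$ is a ring isomorphism, so $\alpha_\tau(a) \in Z(A_e)$. Hence the sum lies in $Z(A_e)$.

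For invariance, take $\sigma \in \Delta(e)$. By (O3),
\[
\alpha_\sigma(\alpha_\tau(a)) = \beta_{\sigma,\tau}\,\alpha_{\sigma\tau}(a)\,\beta_{\sigma,\tau}^{-1}.
\]
Since $\alpha_{\sigma\tau}(a)$ is central in $A_e$, this equals $\alpha_{\sigma\tau}(a)$. Next, $\sigma\tau$ is right $\Lambda$-equivalent to $\tau^\sigma \in T_{f,e}$ (Definition~\ref{def:sigmaaction}, with $e'=e$). Lemma~\ref{lem:welldefinedtr} then gives $\alpha_{\sigma\tau}(a) = \alpha_{\tau^\sigma}(a)$.

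Finally, by Lemma~\ref{lem:|T_f,e'|=|T_f,e|}, the map $\tau \mapsto \tau^\sigma$ is a bijection of $T_{f,e}$. Hence
\[
\alpha_\sigma\Bigl(\sum_{\tau\in T_{f,e}}\alpha_\tau(a)\Bigr) = \sum_{\tau \in T_{f,e}} \alpha_{\tau^\sigma}(a) = {\rm tr}_{\Gamma/\Delta}^{f,e}(a).
\]

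\emph{Main point of care.} The only subtle step is the conjugation by $\beta_{\sigma,\tau}$ arising from (O3). It disappears because we restrict to central elements, which is exactly why the domain is $Z(A_f)^{\Delta(f)}$ rather than $A_f^{\Delta(f)}$. The rest is bookkeeping with the action $\tau\mapsto\tau^\sigma$.
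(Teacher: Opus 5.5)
Your proof is correct and follows the paper's argument essentially step for step. Each $\alpha_\tau(a)$ is central because ring isomorphisms preserve centres. Independence of the choice of $T_{f,e}$ comes from Lemma~\ref{lem:welldefinedtr}. $\Delta(e)$-invariance comes from (O3), centrality of $\alpha_{\sigma\tau}(a)$, Lemma~\ref{lem:welldefinedtr} applied to $\sigma\tau$ and $\tau^\sigma$, and the bijection $\tau\mapsto\tau^\sigma$ of $T_{f,e}$ from Lemma~\ref{lem:|T_f,e'|=|T_f,e|}. Your extra remark that the sum only makes sense when $[\Gamma(f):\Delta(f)]<\infty$, i.e.\ $f\in\Gamma_0^{\rm fin}$, is a point the paper leaves implicit; it only ever applies the map to such $f$.
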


\begin{proof}
Since each
$\alpha_\tau : A_f \to A_e$,
$\tau \in T_{f,e}$,
is a ring isomorphism, it restricts
to a ring isomorphism
$Z(A_f) \to Z(A_e)$.
Therefore,
${\rm tr}_{\Gamma/\Delta}^{f,e}$ maps
$Z(A_f)$ into $Z(A_e)$.
By Lemma \ref{lem:welldefinedtr},
${\rm tr}_{\Gamma/\Delta}^{f,e}$ is
independent of the choice of the
set $T_{f,e}$.
Take $a \in Z(A_f)^{\Delta(f)}$ and
$\sigma \in \Delta(e)$.
Since $\alpha_{\sigma\tau}(a)\in Z(A_e)$
for $\tau \in T_{f,e}$, axiom (O3) implies that
\[
\alpha_\sigma\bigl(
{\rm tr}_{\Gamma/\Delta}^{f,e}(a)
\bigr)
=
\sum_{\tau \in T_{f,e}}
\alpha_\sigma(\alpha_\tau(a))
=
\sum_{\tau \in T_{f,e}}
\beta_{\sigma,\tau}\alpha_{\sigma\tau}(a)\beta_{\sigma,\tau}^{-1}
=
\sum_{\tau \in T_{f,e}}
\alpha_{\sigma\tau}(a).
\]
Now \(\sigma\tau\) and \(\tau^\sigma\) are right
\(\Lambda\)-equivalent, so by Lemma \ref{lem:welldefinedtr},
$\alpha_{\sigma\tau}(a)=
\alpha_{\tau^\sigma}(a)$.
Hence
$\alpha_\sigma\bigl(
{\rm tr}_{\Gamma/\Delta}^{f,e}(a)
\bigr)
=
\sum_{\tau \in T_{f,e}}
\alpha_{\tau^\sigma}(a)$.
By Lemma \ref{lem:|T_f,e'|=|T_f,e|},
the map \(\tau\mapsto \tau^\sigma\) is a bijection of \(T_{f,e}\), so the last sum equals
$\sum_{\tau \in T_{f,e}}
\alpha_\tau(a)
=
{\rm tr}_{\Gamma/\Delta}^{f,e}(a)$.
Thus
${\rm tr}_{\Gamma/\Delta}^{f,e}(a)
\in Z(A_e)^{\Delta(e)}$.
\end{proof}

\begin{lemma}\label{lem:tracegammadelta}
If $e \in \Gamma_0$ and
$a \in Z(A_e)^{\Delta(e)}$, then
${\rm tr}_{\Gamma(e)/\Delta(e)}(a u_e) =
{\rm tr}_{\Gamma/\Delta}^{e,e}(a)\, u_e$.
\end{lemma}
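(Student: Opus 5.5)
The plan is to unwind both sides using the formula \eqref{eq:crossedaction} for $\gamma_\sigma$ in the object crossed product. First I fix the meaning of the left-hand side. Here $\Gamma(e)$ is a group and $\Delta(e)$ a subgroup, and $R_{\Gamma(e)}$ is a unital ring strongly graded by $\Gamma(e)$ with identity $1_e = 1_{A_e}u_e$. So ${\rm tr}_{\Gamma(e)/\Delta(e)}$ is the relative trace from the introduction: we have ${\rm tr}_{\Gamma(e)/\Delta(e)}(x) = \sum_{\tau \in T_e} \gamma_\tau(x)$ with $T_e = T_{e,e}$. Indeed, $T_e$ consists of representatives of the right $\Lambda$-classes inside $\Gamma(e)$, and these are exactly the left cosets $\tau\Delta(e)$. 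Hence $T_e$ is a left transversal of $\Delta(e)$ in $\Gamma(e)$, as already used in the proof of Corollary \ref{prop: sufficent conditions for separability}.

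Next, by Lemma \ref{lem:fixed}, $au_e \in C_{R_e}(R_{\Delta(e)}) \subseteq C_R(R_\Lambda)$ (one checks the latter inclusion since $R_e$ kills $R_{\Delta(f)}$ for $f\neq e$ on both sides). Therefore, by Lemma \ref{lem: gamma_sigma=gamma_tau}, the value of the trace does not depend on the choice of transversal. By the lemma after Definition \ref{def:nuv}, it also does not depend on the choice of elements $u_\tau^{(i)},v_{\tau^{-1}}^{(i)}$, since $au_e \in C_R(R_0)$. We may therefore use $n_\tau = 1$, $u_\tau^{(1)} = 1_{A_e}u_\tau$ and $v_{\tau^{-1}}^{(1)} = \beta_{\tau^{-1},\tau}^{-1}u_{\tau^{-1}}$ as in the paragraph preceding \eqref{eq:crossedaction}. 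For $\tau\in T_e$ we have $d(\tau)=r(\tau)=e$, so \eqref{eq:crossedaction} gives $\gamma_\tau(au_e) = \alpha_\tau(a)u_e$.

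Summing over $\tau \in T_e$ then gives
\[
{\rm tr}_{\Gamma(e)/\Delta(e)}(au_e) = \sum_{\tau\in T_{e,e}} \alpha_\tau(a)\,u_e = {\rm tr}_{\Gamma/\Delta}^{e,e}(a)\,u_e ,
\]
using additivity of $A_e \ni b\mapsto bu_e$. If the transversal used for ${\rm tr}^{e,e}_{\Gamma/\Delta}$ differs from the one used for the trace, this is harmless: both sides are independent of the choice, by Lemma \ref{lem:welldefinedtr} and by the independence shown above.

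The only delicate point is the bookkeeping: confirming that the group relative trace matches the sum over $T_{e,e}$, and that the specific choice of $u,v$ is allowed. Both follow from the independence lemmas quoted above, so I expect no real obstacle.
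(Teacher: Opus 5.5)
Your proposal is correct and matches the paper's proof, which simply says the identity follows immediately from \eqref{eq:crossedaction} applied to each $\tau \in T_{e,e}$, after which one sums. Your extra bookkeeping is correct but not strictly needed, since the paper already fixes $n_\sigma=1$, $u_\sigma^{(1)}=1_{A_{r(\sigma)}}u_\sigma$, $v_{\sigma^{-1}}^{(1)}=\beta_{\sigma^{-1},\sigma}^{-1}u_{\sigma^{-1}}$ and uses $T_{e,e}$ as the left transversal; this bookkeeping covers the inclusion $C_{R_e}(R_{\Delta(e)}) \subseteq C_R(R_\Lambda)$, that $T_e$ is a left transversal of $\Delta(e)$ in $\Gamma(e)$, and independence of the choices.
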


\begin{proof}
This follows immediately from
\eqref{eq:crossedaction}.
\end{proof}

\begin{theorem}\label{thm:crossed}
Let $R/R_\Delta$ be an extension of
object crossed products.
\begin{enumerate}[{\rm (a)}]

\item $R/R_\Delta$ is
separable if and only if, for each $e \in \Gamma_0$,
there exist a finite subset $F_e$ of
$\Gamma_0^{\rm fin}$
and elements $a_f^e \in Z(A_f)^{\Delta(f)}$,
for $f \in F_e$, with
$\sum_{f \in F_e}
{\rm tr}_{\Gamma/\Delta}^{f,e}(a_f^e) = 1_{A_e}$.

\item Suppose that there are
$f \in \Gamma_0$ and
$a \in Z(A_f)^{\Delta(f)}$ such that
$[\Gamma(f):\Delta(f)] < \infty$ and
${\rm tr}_{\Gamma/\Delta}^{f,f}(a)
= 1_{A_f}$.
Then $R/R_\Delta$ is separable.

\item If $\Delta$ is a normal
subgroupoid of $\Gamma$, then
$R/R_\Delta$ is separable if and only if,
for each $e \in \Gamma_0$,
$[\Gamma(e):\Delta(e)] < \infty$ and
there is $a \in Z(A_e)^{\Delta(e)}$
with ${\rm tr}_{\Gamma/\Delta}^{e,e}(a)
= 1_{A_e}$.

\item If $R = B\rtimes_\beta\Gamma$
is an object twisted groupoid
ring, then $R/R_\Delta$ is separable
if and only if, for each $e \in \Gamma_0$,
there exist a finite subset $F_e$ of
$\Gamma_0^{\rm fin}$
and elements $b_f^e \in Z(B)$,
for $f \in F_e$, such that
$\sum_{f \in F_e}
[\Gamma(f):\Delta(f)]\, b_f^e = 1_B$.

\item If $\Delta$ is a normal
subgroupoid of $\Gamma$ and
$R = B\rtimes_\beta\Gamma$
is an object twisted groupoid ring,
then $R/R_\Delta$ is separable
if and only if, for each $e \in \Gamma_0$,
$[\Gamma(e) : \Delta(e)]$ is finite and
invertible in $B$.

\item If $R = B\rtimes_\beta\Gamma$
is an object twisted groupoid ring and
there is $f \in \Gamma_0$ such that
$[\Gamma(f):\Delta(f)]$ is finite and
invertible in $B$, then
$R/R_\Delta$ is separable.

\end{enumerate}
\end{theorem}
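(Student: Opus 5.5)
The plan is to translate each part of Theorem \ref{prop: separability R over R_Delta} and Theorem \ref{prop: separability R over R_Delta, normal case} into the language of the crossed system. Three ingredients do this. First, Lemma \ref{lem:fixed} identifies $C_{R_f}(R_{\Delta(f)})$ with $(Z(A_f)^{\Delta(f)})u_f$. Second, formula \eqref{eq:crossedaction} gives $\gamma_\tau(au_f)=\alpha_\tau(a)u_{e}$ for $\tau\in T_{f,e}$. Third, combining these, for $a\in Z(A_f)^{\Delta(f)}$ and $f\in\Gamma_0^{\rm fin}$,
\[
{\rm tr}^e_{\Gamma/\Delta}(au_f)=\sum_{\tau\in T_{f,e}}\alpha_\tau(a)u_e={\rm tr}^{f,e}_{\Gamma/\Delta}(a)\,u_e .
\]
Only the index $f$ contributes, since $\gamma_\tau(au_f)=0$ unless $d(\tau)=f$. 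By this identity, the condition ${\rm tr}^e_{\Gamma/\Delta}(r^e)=1_{R_e}$ with $r^e=\sum_{f\in F_e} a^e_f u_f$ becomes $\sum_{f\in F_e}{\rm tr}^{f,e}_{\Gamma/\Delta}(a^e_f)=1_{A_e}$, using $1_{R_e}=1_{A_e}u_e$.

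For (a), I apply the equivalence (b)$\Leftrightarrow$(c) of Theorem \ref{prop: separability R over R_Delta}, which is valid because $\Gamma$ is connected and $R$ is object unital and strongly graded by \cite[Prop.~16]{CLP}. For (b), I set $r:=au_f\in C_{R_0}(R_\Lambda)$, which holds by Lemma \ref{lem:fixed}, the decomposition $C_{R_0}(R_\Lambda)=\oplus_g C_{R_g}(R_{\Delta(g)})$, and the fact that $au_f$ commutes with components at other objects. Since $f\in\Gamma_0^{\rm fin}$, the identity above gives ${\rm tr}^f_{\Gamma/\Delta}(r)=1_{A_f}u_f=1_f$, so (d)$\Rightarrow$(b) of that theorem yields separability. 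For (c), I use Theorem \ref{prop: separability R over R_Delta, normal case}(b)$\Leftrightarrow$(c) together with Lemma \ref{lem:tracegammadelta}. That lemma converts ${\rm tr}^e_{\Gamma(e)/\Delta(e)}(au_e)=1_e$ into ${\rm tr}^{e,e}_{\Gamma/\Delta}(a)=1_{A_e}$, after noting that $T_e$ is a left transversal of $\Delta(e)$ in $\Gamma(e)$.

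For the twisted parts (d)--(f), $\alpha$ is trivial, so $Z(B)^{\Delta(f)}=Z(B)$. Moreover ${\rm tr}^{f,e}_{\Gamma/\Delta}(b)=|T_{f,e}|\,b=[\Gamma(f):\Delta(f)]\,b$ by Lemma \ref{lem:|T_f,e'|=|T_f,e|}. Substituting this into (a) gives (d). In (e), the condition from (c) reads $[\Gamma(e):\Delta(e)]\,b=1_B$ for some $b\in Z(B)$, which is equivalent to the index being invertible in $B$. Indeed, if $nb=1$ then $n\cdot 1_B$ has inverse $b$, and conversely the inverse of the central element $n\cdot1_B$ is central. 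Part (f) follows in the same way from (b), taking $a:=[\Gamma(f):\Delta(f)]^{-1}\in Z(B)$.

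The main care point is the trace identity: I must check that summing over all $f'\in\Gamma_0^{\rm fin}$ in ${\rm tr}^e_{\Gamma/\Delta}$ kills the terms with $f'\neq f$, and that the finiteness requirement $f\in\Gamma_0^{\rm fin}$ is exactly what (a) and (b) assume. The remaining steps are routine.
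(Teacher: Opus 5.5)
Your proposal is correct and follows essentially the paper's route: you translate via Lemma~\ref{lem:fixed} and \eqref{eq:crossedaction} into the identity ${\rm tr}^e_{\Gamma/\Delta}(au_f)={\rm tr}^{f,e}_{\Gamma/\Delta}(a)\,u_e$, read off (a) and (c) from Theorems~\ref{prop: separability R over R_Delta} and~\ref{prop: separability R over R_Delta, normal case} (the latter via Lemma~\ref{lem:tracegammadelta}), and obtain (d)--(f) by specializing to trivial $\alpha$ with $|T_{f,e}|=[\Gamma(f):\Delta(f)]$. The only differences are minor shortcuts, and both are valid: for (b) you feed $r=au_f$ directly into condition (d) of Theorem~\ref{prop: separability R over R_Delta} instead of going through Corollary~\ref{prop: sufficent conditions for separability}, and you derive (f) from (b) rather than from (d) with $F_e=\{f\}$.
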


\begin{proof}
(a) follows from
Theorem \ref{prop: separability R over R_Delta}
and Lemma \ref{lem:fixed}.

(b) follows from
Corollary~\ref{prop: sufficent conditions for separability},
Lemma \ref{lem:tracegammadelta}, and (a).

(c) follows from
Theorem \ref{prop: separability R over R_Delta, normal case},
Lemma \ref{lem:tracegammadelta}, and (a).

(d) follows from
Lemma \ref{lem:|T_f,e'|=|T_f,e|}
and (a), since $\alpha$ is trivial.

(e) follows from (c).

(f) follows from (d) with
$F_e: =\{ f \}$ and 
$b_f^e := 
[\Gamma(f):\Delta(f)]^{-1} \in B$,
$e \in \Gamma_0$.
\end{proof}

Let $I$ be a nonempty set.
Recall that $\Gamma$
is called the thin groupoid defined by $I$ if
$\Gamma$ has $I$ as its set of objects and
all pairs $(i,j): j \to i$,
for $i,j \in I$, as morphisms.
Composition of morphisms is defined by
$(i,j)(j,k) = (i,k)$,
$i,j,k \in I$. In that case, we write
$\Gamma = I \times I$.
Note that $\Gamma$ is connected.

\begin{corollary}\label{cor:thin}
Let $\Gamma = I \times I$ for some
nonempty set $I$.
Then $R/R_\Delta$ is separable.
\end{corollary}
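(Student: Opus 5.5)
Since $\Gamma = I\times I$ is thin, every isotropy group $\Gamma(f) = \{(f,f)\}$ is trivial. Hence $\Delta(f) = \Gamma(f)$ is trivial as well, and $[\Gamma(f):\Delta(f)] = 1$ for every $f \in \Gamma_0 = I$. The plan is to reduce the whole statement to Corollary~\ref{prop: sufficent conditions for separability} (equivalently Theorem~\ref{thm:crossed}(b)) applied at a single object. Note that $\Gamma$ is connected, so the standing assumptions of Section~\ref{sec:objectcrossedproducts} are satisfied.

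Concretely, fix any $f \in I$. Since $\Gamma(f)=\Delta(f)=\{f\}$, we have $R_{\Gamma(f)} = R_f = R_{\Delta(f)}$. The extension $R_{\Gamma(f)}/R_{\Delta(f)}$ is therefore the trivial extension $R_f/R_f$ of a unital ring, which is separable: the map $R_f \to R_f\otimes_{R_f}R_f$, $x\mapsto x\otimes 1_f = 1_f \otimes x$, is an $R_f$-bimodule section of $\mu$. Corollary~\ref{prop: sufficent conditions for separability} then gives that $R/R_\Delta$ is separable.

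Alternatively, one can go through Theorem~\ref{thm:crossed}(b) directly. Take $a = 1_{A_f}$, which lies in $Z(A_f)^{\Delta(f)}$ because $\alpha_f = \id$ by (O1). Since $T_{f,f}=\{f\}$ (recall $\Gamma_0 \subseteq T$), we get ${\rm tr}^{f,f}_{\Gamma/\Delta}(1_{A_f}) = \alpha_f(1_{A_f}) = 1_{A_f}$, and the index is $1<\infty$.

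There is essentially no obstacle. The only point to check is that $T_{f,f}$ consists exactly of $f$. This holds because $\Gamma(f,f)=\{(f,f)\}$ has a single element and $T$ was chosen to contain $\Gamma_0$.
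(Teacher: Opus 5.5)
Your proof is correct and uses the same idea as the paper: the thin groupoid has trivial isotropy groups, so $[\Gamma(f):\Delta(f)]=1$ and a sufficient criterion at a single object applies. The paper cites Theorem~\ref{thm:crossed}(f), which is stated only for object twisted groupoid rings $B\rtimes_\beta\Gamma$, so your appeal to Corollary~\ref{prop: sufficent conditions for separability} (or to Theorem~\ref{thm:crossed}(b) with $a=1_{A_f}$) is actually the more accurate justification for the general object crossed product $R$ of this section.
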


\begin{proof}
For each $e \in \Gamma_0$,
the isotropy group $\Gamma(e)$ is trivial, and so $[\Gamma(e):\Delta(e)] = 1$.
Therefore, by Theorem~\ref{thm:crossed}(f),
$R/R_\Delta$ is separable.
\end{proof}

We end this section with an application
of Theorem \ref{thm:crossed} to a
construction from \cite{CLP,lundstrom2005}
of crossed products defined by
(not necessarily finite or Galois)
field extensions.

\begin{example}
Let $L/K$ be a separable field extension of
possibly infinite degree.
Let $\overline{K}$ denote a fixed algebraic
closure of $K$ containing $L$.
Choose a normal closure $N/K$ of $L/K$ in
$\overline{K}$.
Let $G$ denote the Galois group of $N/K$,
and let $\tilde{L} := (L_i)_{i \in I}$
be the family of distinct conjugate fields of $L$
under the action of $G$.

We now define a groupoid $\Gamma$ as follows.
We put $\Gamma_0 := \tilde{L}$.
If $i,j \in I$, then we put
$\Gamma(L_i,L_j) :=
\{ g|_{L_i} \mid g \in G,\ g(L_i)=L_j \}$.
We finally set
$\Gamma := \uplus_{i,j \in I} \Gamma(L_i,L_j)$.
Then $\Gamma$ is a connected groupoid.
For each $\sigma \in \Gamma$, we put
$\alpha_\sigma := \sigma$.
For every $(\sigma,\tau) \in \Gamma_2$,
choose $\beta_{\sigma,\tau} \in
L_{r(\sigma)} \setminus \{ 0 \}$
satisfying (O2) and (O4), as in
\cite{CLP,lundstrom2005}.
Then (O1) is automatic, and (O3) holds since the rings
$L_i$ are commutative fields and
$\alpha_\sigma \circ \alpha_\tau = \alpha_{\sigma\tau}$.
We say that
$\tilde{L} \rtimes^{\alpha}_{\beta} \Gamma$
is the object crossed product
defined by $L/K$ and $\beta$, and we
denote it by $(L/K,\beta)$.
We also put
$(L/K,\beta_\Delta) :=
\tilde{L}
\rtimes^{\alpha_\Delta}_{\beta_\Delta}
\Delta$.
\end{example}

\begin{theorem}\label{thm:relativecrossed}
With the above notation,
suppose that for each $j \in I$,
$\Delta(L_j)$ is closed in
$\Gamma(L_j)$ with respect to the
Krull topology.
Then $(L/K,\beta)/(L/K,\beta_\Delta)$
is separable if and only if there exists
$i \in I$ such that
$[\Gamma(L_i):\Delta(L_i)] < \infty$.
\end{theorem}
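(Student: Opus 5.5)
The plan is to deduce both directions from Theorem~\ref{thm:crossed}, applied to $R := (L/K,\beta)$ and $R_\Delta := (L/K,\beta_\Delta)$. Here $A_j = L_j$ is a field, so $Z(A_j) = L_j$ and $Z(A_j)^{\Delta(L_j)} = L_j^{\Delta(L_j)}$ is the fixed field of $\Delta(L_j)$. Since $\alpha_\tau = \tau$, the relative trace is
\[
{\rm tr}_{\Gamma/\Delta}^{f,e}(a) = \sum_{\tau \in T_{f,e}} \tau(a).
\]

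\emph{Necessity.} Suppose the extension is separable. By Theorem~\ref{thm:crossed}(a), for any fixed $e \in \Gamma_0$ there are a finite set $F_e \subseteq \Gamma_0^{\rm fin}$ and elements $a_f^e$ with $\sum_{f \in F_e} {\rm tr}^{f,e}_{\Gamma/\Delta}(a_f^e) = 1_{L_e} \neq 0$. Hence $F_e$ is nonempty. Any $L_i \in F_e$ satisfies $[\Gamma(L_i):\Delta(L_i)] < \infty$ by the definition of $\Gamma_0^{\rm fin}$. This direction needs no closedness hypothesis.

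\emph{Sufficiency.} Fix $i$ with $[\Gamma(L_i):\Delta(L_i)] < \infty$. By Theorem~\ref{thm:crossed}(b), it suffices to find $a \in L_i^{\Delta(L_i)}$ with ${\rm tr}^{L_i,L_i}_{\Gamma/\Delta}(a) = 1$. Here $T_{L_i,L_i}$ is a left transversal of $\Delta(L_i)$ in $\Gamma(L_i)$, so this trace is $\sum_{\tau} \tau(a)$ over coset representatives. I would identify this sum with a field trace in four steps.
\begin{enumerate}
\item Put $F := L_i^{\Gamma(L_i)}$. Observe that $\Gamma(L_i)$ is the image of the stabilizer $G_{L_i} = \{g \in G \mid g(L_i) = L_i\}$ under restriction. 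This stabilizer is a closed subgroup of the profinite group $G$, so its image is compact, hence closed in ${\rm Aut}(L_i)$ with its Krull topology.
\item Deduce from infinite Galois theory that $L_i/F$ is Galois with ${\rm Gal}(L_i/F) = \Gamma(L_i)$. The extension $L_i/F$ is algebraic and separable (it lies in $N/K$), and $\Gamma(L_i)$ is a closed group of automorphisms with fixed field $F$.
\item Put $E := L_i^{\Delta(L_i)}$. Since $\Delta(L_i)$ is Krull closed, the Galois correspondence gives ${\rm Gal}(L_i/E) = \Delta(L_i)$ and $[E:F] = [\Gamma(L_i):\Delta(L_i)] < \infty$. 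Moreover, the $F$-embeddings $E \to L_i$ (equivalently into $\overline K$) are exactly the restrictions $\tau|_E$, one for each coset $\tau\Delta(L_i)$.
\item Conclude that for $a \in E$ we have $\sum_{\tau \in T_{L_i,L_i}} \tau(a) = {\rm Tr}_{E/F}(a)$.
\end{enumerate}

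Since $E/F$ is finite separable, ${\rm Tr}_{E/F}$ is a nonzero $F$-linear functional $E \to F$, hence surjective. So there is $a \in E$ with ${\rm Tr}_{E/F}(a) = 1$, and Theorem~\ref{thm:crossed}(b) then yields separability.

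The main obstacle is the Galois-theoretic bookkeeping in steps 1--3. One must check that $\Gamma(L_i)$ really is the full Galois group of $L_i$ over its fixed field; this is where the closedness of the restricted stabilizer enters. One must also check that closedness of $\Delta(L_i)$ makes the cosets correspond bijectively to the distinct $F$-embeddings of $E$, so that the coset sum is the genuine field trace. Without closedness, $E$ would be the fixed field of the closure of $\Delta(L_i)$, and the index count could break.
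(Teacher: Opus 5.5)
Your proposal is correct and follows the same route as the paper. Necessity comes from Theorem~\ref{thm:crossed}(a). For sufficiency, closedness of $\Delta(L_i)$ and infinite Galois theory make $L_i^{\Delta(L_i)}/L_i^{\Gamma(L_i)}$ a finite separable extension of degree $[\Gamma(L_i):\Delta(L_i)]$; its surjective field trace coincides with ${\rm tr}^{L_i,L_i}_{\Gamma/\Delta}$, and Theorem~\ref{thm:crossed}(b) finishes. Your identification of that trace via $F$-embeddings indexed by left cosets of $\Delta(L_i)$ is slightly more careful than the paper's appeal to ${\rm Gal}(L_i^{\Delta(L_i)}/L_i^{\Gamma(L_i)}) \cong \Gamma(L_i)/\Delta(L_i)$, which tacitly treats that extension as Galois. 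Your steps 1--2 also justify the equality $\Gamma(L_i)={\rm Gal}(L_i/L_i^{\Gamma(L_i)})$, which the paper simply asserts.
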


\begin{proof}
Suppose that
$(L/K,\beta)/(L/K,\beta_\Delta)$
is separable.
By Theorem \ref{thm:crossed}(a), there exists
$i \in I$ such that
$[\Gamma(L_i):\Delta(L_i)] < \infty$.

Conversely, suppose that there exists
$i \in I$ such that
$[\Gamma(L_i):\Delta(L_i)] < \infty$.
Since
$\Gamma(L_i)= {\rm Gal}(L_i/L_i^{\Gamma(L_i)})$
and $\Delta(L_i)$ is closed in $\Gamma(L_i)$, the
fundamental theorem of infinite Galois theory
(see \cite[Prop.~7.12 and Thm.~7.13(b)]{milne2022})
implies that
$
{\rm Gal} (L_i/L_i^{\Delta(L_i)}) 
= \Delta(L_i)$.
Because
$[\Gamma(L_i):\Delta(L_i)]<\infty$,
the subgroup $\Delta(L_i)$ is open in $\Gamma(L_i)$, and hence
$[L_i^{\Delta(L_i)}:L_i^{\Gamma(L_i)}]
=
[\Gamma(L_i):\Delta(L_i)]$.
Thus
$L_i^{\Delta(L_i)} / L_i^{\Gamma(L_i)}$
is a finite separable extension.
By \cite[Thm.~(4.6)]{reiner1975}, the usual field trace
$\operatorname{Tr}_{L_i^{\Delta(L_i)} / L_i^{\Gamma(L_i)}} :
L_i^{\Delta(L_i)} \to L_i^{\Gamma(L_i)}$
is surjective.
Moreover, by \cite[Rem.~5.47, p.~84]{milne2022}, this trace is given by the sum of the conjugates under
$
{\rm Gal} 
(L_i^{\Delta(L_i)}/L_i^{\Gamma(L_i)})
\cong \Gamma(L_i)/\Delta(L_i),
$
and therefore coincides with the relative trace map used in
Theorem \ref{thm:crossed}(b).
Hence Theorem \ref{thm:crossed}(b) implies that $(L/K,\beta)/(L/K,\beta_\Delta)$
is separable.
\end{proof}

\begin{corollary}[{\cite[Thm.~3]{CLP}}]
With the above notation,
suppose that $\Delta(L_i)=\{\id_{L_i}\}$
for all $i\in I$.
Then $(L/K,\beta)/(L/K,\beta_\Delta)$
is separable if and only if
$\Aut_K(L)$ is finite.
\end{corollary}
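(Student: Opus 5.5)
Since $\Delta(L_i)=\{\id_{L_i}\}$ for every $i\in I$, each $\Delta(L_j)$ is the trivial subgroup of $\Gamma(L_j)$. This subgroup is closed in the Krull topology, because the Krull topology is Hausdorff. The hypothesis of Theorem~\ref{thm:relativecrossed} therefore holds, and that theorem gives: $(L/K,\beta)/(L/K,\beta_\Delta)$ is separable if and only if there is $i\in I$ with $|\Gamma(L_i)|=[\Gamma(L_i):\{\id_{L_i}\}]<\infty$. It remains to show that this last condition is equivalent to finiteness of $\Aut_K(L)$.

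To do this, I will identify $\Gamma(L_i)$ with $\Aut_K(L_i)$ and then transport the result to $L$.

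\emph{Identifying $\Gamma(L_i)$.} By definition, $\Gamma(L_i)=\{g|_{L_i}\mid g\in G,\ g(L_i)=L_i\}$. Each such restriction is a $K$-automorphism of $L_i$. Conversely, take any $\phi\in\Aut_K(L_i)$. Since $N/K$ is normal and $L_i\subseteq N$, the embedding $L_i\to N$ given by $\phi$ extends to a $K$-automorphism of $N$. (For infinite degree this extension uses the standard Zorn's lemma argument, or one may cite the extension property of normal extensions from \cite{milne2022}.) The extension is some $g\in G$ with $g|_{L_i}=\phi$ and $g(L_i)=L_i$. Hence $\Gamma(L_i)=\Aut_K(L_i)$.

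\emph{Transporting to $L$.} Each $L_i$ is $K$-isomorphic to $L$: we have $L_i=g(L)$ for some $g\in G$, with $L$ itself among the conjugates. Conjugation by $g|_L$ therefore gives a group isomorphism $\Aut_K(L)\cong\Aut_K(L_i)$. Consequently $|\Gamma(L_i)|=|\Aut_K(L)|$ for every $i\in I$. In particular, the existential condition ``there is $i$ with $\Gamma(L_i)$ finite'' is equivalent to finiteness of $\Aut_K(L)$. (Alternatively, connectedness of $\Gamma$ already makes all the isotropy groups isomorphic.)

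The only step that is not pure bookkeeping is the identification $\Gamma(L_i)=\Aut_K(L_i)$, namely that every $K$-automorphism of $L_i$ extends to an element of $G$. I expect to handle it by the normality of $N/K$ as above. The rest follows directly from Theorem~\ref{thm:relativecrossed}.
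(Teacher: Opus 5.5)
Your proof is correct and follows essentially the same route as the paper: you note that the trivial subgroups are Krull-closed, apply Theorem~\ref{thm:relativecrossed}, and use conjugation to show that every isotropy group $\Gamma(L_i)$ has the same cardinality as $\Aut_K(L)$. The only difference is that you justify two points the paper simply asserts: $\Gamma(L_i)=\Aut_K(L_i)$, which you obtain by extending $K$-automorphisms of $L_i$ to the normal closure $N$, and the closedness of $\{\id_{L_i}\}$, which you obtain from the Hausdorff property.
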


\begin{proof}
Since $\Delta(L_i)=\{\id_{L_i}\}$ for all $i\in I$,
these subgroups are closed in $\Gamma(L_i)$ with respect to the
Krull topology. Hence Theorem~\ref{thm:relativecrossed} implies that
$(L/K,\beta)/(L/K,\beta_\Delta)$ is separable if and only if there exists
$i\in I$ such that $|\Gamma(L_i)|<\infty$.

Now let $i,j\in I$. Since $L_i$ and $L_j$ are conjugate fields, there is
$g\in G = {\rm Gal}(N/K)$ with $g(L_i)=L_j$.
Conjugation by $g$ defines a group isomorphism
$\Gamma(L_i)\to \Gamma(L_j)$,
$\sigma\mapsto g\sigma g^{-1}$.
Thus all the groups $\Gamma(L_i)$, $i\in I$, are isomorphic.
In particular, $|\Gamma(L_i)|<\infty$ for some $i\in I$ if and only if
$|\Gamma(L_j)|<\infty$ for every $j\in I$.
Finally, choosing $j\in I$ such that $L_j=L$, we get $\Gamma(L_j)=\Aut_K(L)$.
Therefore, $(L/K,\beta)/(L/K,\beta_\Delta)$ is separable if and only if
$\Aut_K(L)$ is finite.
\end{proof}

\end{document}